\newcommand{\beq}{\begin{equation}}
\newcommand{\eeq}{\end{equation}}
\newcommand{\e}{\mathrm{e}}
\newcommand{\nab}{\langle \nabla \rangle_c}
\newcommand{\Lc}{\mathcal{L}_c}
\newtheorem{theorem}{Theorem}[section]
\newtheorem{defi}[theorem]{Definition}
\newtheorem{lemma}[theorem]{Lemma}
\newtheorem{cor}[theorem]{Corollary}
\newtheorem{rem}[theorem]{Remark}
\author{María Cabrera Calvo}
\address{LJLL (UMR 7598), Sorbonne Universit\'e, UPMC, 4 place Jussieu, 75005, Paris, France (M. Cabrera Calvo)}
\email{maria.cabrera\_calvo@sorbonne-universite.fr}
\author{Katharina Schratz}
\address{LJLL (UMR 7598), Sorbonne Universit\'e, UPMC, 4 place Jussieu, 75005, Paris, France (K. Schratz)}
\email{katharina.schratz@sorbonne-universite.fr}
\begin{document}
\begin{abstract}
We propose a novel class of  uniformly accurate integrators for the Klein--Gordon equation  which capture classical $c=1$ as well as highly-oscillatory non-relativistic regimes $c\gg1$ and, at the same time, allow for  low regularity approximations. In particular, the schemes converge with order $\tau$ and $\tau^2$, respectively, under lower regularity assumptions than classical schemes, such as splitting or exponential integrator methods, require.  The new schemes in addition preserve the nonlinear Schr\"odinger (NLS) limit on the discrete level. More precisely, we will design our schemes in such a way that in the limit $c\to \infty$ they converge to a recently introduced class of low regularity integrators for NLS.
\end{abstract}

\title[]{Uniformly accurate low regularity integrators for the Klein--Gordon equation from the classical to non-relativistic limit regime}

\maketitle

\section{Introduction}
We consider the nonlinear Klein--Gordon equation
\begin{equation}
\begin{aligned}\label{eq:kgr}
& c^{-2} \partial_{tt} z - \Delta z + c^2 z = \vert z\vert^{2} z, \quad z(0,x) = z_0(x),\quad \partial_t z(0,x) = c^2 z'_0(x)\\
\end{aligned}
\end{equation}
which in the so-called non-relativistic limit regime $c\to \infty$ collapses to the classical cubic nonlinear Schr\"odinger equation. More precisely, the exact solution $z$ of \eqref{eq:kgr} allows (for sufficiently smooth data) the expansion 
\begin{align}\label{zexp}
z(t,x) = \frac{1}{2}\left( \e^{ic^2 t} u_{\ast,\infty}(t,x) + \e^{-ic^2t} \overline{v}_{\ast,\infty}(t,x) \right) +  \mathcal{O}(c^{-2})
\end{align}
 on a time-interval uniform in $c$, where $(u_{\ast,\infty}, v_{\ast,\infty})$ satisfy the cubic Schr\"odinger limit system
\begin{equation}\label{NLSlimit}
\begin{aligned}
i \partial_t u_{\ast, \infty} &=& \frac{1}{2} \Delta u_{\ast,\infty} + \frac{1}{8}\big(\left \vert  u_{\ast,\infty}\right\vert^2 + 2\left\vert v_{\ast,\infty}\right\vert^2\big)  u_{\ast,\infty}\qquad u_{\ast,\infty}(0) = \varphi - i \gamma
\\
i \partial_t v_{\ast, \infty} &=& \frac{1}{2} \Delta v_{\ast,\infty} + \frac{1}{8}\big(\left \vert  v_{\ast,\infty}\right\vert^2 + 2\left\vert u_{\ast,\infty}\right\vert^2\big)  v_{\ast,\infty},\qquad v_{\ast,\infty}(0) = \overline{\varphi} - i \overline{\gamma}
\end{aligned}
\end{equation}
with initial values
\begin{align*}
z(0,x) \stackrel{c\to\infty}{\longrightarrow} {\varphi(x)} \quad \text{and}\quad c^{-1}\left(c^2-\Delta\right)^{-1/2} \partial_t z(0,x) \stackrel{c\to\infty}{\longrightarrow} {\gamma(x)},
\end{align*}
see \cite[Formula (1.3)]{MaNak02} and for the periodic setting \cite[Formula (37)]{FS13}. 

Reproducing this limit behaviour, and in general non relativistic regimes of large $c$,  on the discrete level  is  highly challenging as the large parameter $c$ triggers  oscillations  of type
$$
\left(\e^{\ell ic\sqrt{c^2-\Delta} t}\right)_{\ell \in \mathbb{Z}}
$$
which are difficult to resolve numerically without imposing severe step size restrictions at the cost of huge computational costs. We refer to \cite{EFHI09,HLW} and the references therein for an introduction and overview on highly-oscillatory problems. Gautschi-type methods (see \cite{HoLu99}) were first analyzed in~\cite{BG} and introduce a global error of order $c^4 \tau^2$ which requires the CFL-type condition $c^2 \tau <1$. To overcome this difficulty so-called limit integrators which  reduce the highly-oscillatory problem to  the corresponding non-oscillatory limit system  (i.e., $c\to \infty$ in \eqref{eq:kgr}) as well as uniformly accurate schemes based on multiscale expansions were introduced in \cite{FS13} and \cite{BaoZ,BFS17,ChC} for \emph{smooth solutions}  (at least in $H^{6}$).

{ We also refer to \cite{BaoLT} for recent results on improved error bounds on time splitting methods for long-time dynamics of the nonlinear Klein--Gordon equation with weak nonlinearity in the relativistic $c=1$ regime.}

 In the very recent work \cite{RS}, a low regularity approximation technique  for a class of abstract evolution equations, including parabolic as well as wave type systems, was introduced. This new approach in general allows us to resolve the time dynamics of PDEs under lower regularity assumptions (in space) than classical methods, such as splitting or exponential integrator methods,  require.  
 
Up to now it was an open question whether one can couple the idea of low regularity integrators in space and low regularity (or more precisely uniformly accurate) integrators   in time. The main difficulty lies in  controlling - on the discrete level -  the underlying oscillations triggered by the leading operator
\begin{equation}\label{cnab}
c\nab= c \sqrt{- \Delta + c^2}.
\end{equation}
 This is much more involved than in previous works \cite{BFS17,RS} where either (i) $c=1$ such that no time oscillations appear or (ii)  all regimes of~$c$ are captured, however, $-\Delta$ is considered to be ``neglectable" (as smooth solutions are imposed) and the coupled oscillations $e^{i t c\nab}$ can be simply expanded into a Taylor series 
 \begin{equation}\label{TayK1}
 e^{i t c\nab} = e^{i t c^2} + \mathcal{O}(t \Delta)
 \end{equation}
  reducing the full spectrum of high frequencies to only one single  oscillation $e^{i t c^2}$.

The aim of this paper lies in closing this gap: We will develop a new class of  uniformly accurate schemes for the Klein--Gordon equation \eqref{eq:kgr} which resolve the time oscillations in $c$ and at the same time allow  for a low regularity approximation in space (in the spirit of  \cite{RS}).  The central idea to achieve this   lies in embedding the full spectrum of  oscillations -- in space and in time --
into the numerical discretisation. The new schemes will in particular  
\begin{itemize}
\item[(I)]   allow convergence for rougher data than  splitting or exponential integrator methods \\(involving only first instead of second order derivatives in the local error)
\item[(II)]    converge uniformly in $c$ with error bounds independent of $c$ (allowing us to capture non-oscillatory classical $c=1$ up to highly-oscillatory non-relativistic regimes $c\gg1$), and
\item[(III)]   preserve the NLS limit \eqref{NLSlimit} on the discrete level.
\end{itemize}
More precisely, we will design our schemes in such a way that in the limit $c\to \infty$ we recover  a low regularity integrator discretisation of NLS (see, e.g., \cite{OS18}). 

Compared to previously proposed uniformly accurate schemes (such as \cite{BFS17}) and the  new regularity framework  introduced for non oscillatory problems (\cite{RS}) the construction is in our setting much  more delicate as the full spectrum of frequencies triggered by $-\Delta$ 
is coupled with the possible large parameter $c$. The key  step lies  in   suitable two- and three-scale expansions and    the essentiel estimates on the commutator structure of  \eqref{cnab} (see  Lemma \ref{bound:comm}). This will allow us, in the construction and error analysis of the new schemes, to establish estimates that hold uniformly in $c$. \\

\noindent{\bf Outline of the paper.} We will motivate the new first order scheme \eqref{scheme} in Section~\ref{sec:first} and its second order counterpart \eqref{scheme2} in Section \ref{sec:second}. We state their uniform  and asymptotic convergence in Theorem \ref{thm:glob}, Theorem \ref{thm:asymp}, Theorem \ref{thm:glob2} and Remark \ref{thm:asymp2}, respectively. Our ideas can be extended to higher order methods. Numerical experiments in Section \ref{sec:num} underline our theoretical findings.\\

\noindent{\bf Notation.}
In the following we fix $r>d/2$. { We will} assume periodic boundary conditions that is $x\in \mathbb{T}^d$.   { Our ansatz can, however, be extended to bounded domains $x \in \Omega \subset \mathbb{R}^d$ (equipped with suitable boundary conditions) and the full space $x \in \mathbb{R}^d$ with   Fourier  analysis on $\mathbb{R}^d$ and suitable extension techniques on bounded domains. For the possible implementation on bounded domains we refer to finite difference and finite element methods, see, also \cite{BaoFD}, and for the full space setting to  Malmquist–Takenaka functions (see, e.g.,  \cite{IKSW}).} For reasons of clarity of presentation we will sometimes use the $\mathcal{O}$ notation. We stress that we only employ this notation with constants independent of $c$, i.e., we say that
$$
v- w = \mathcal{O}(z) \quad \text{ if }\quad  \Vert v-w  \Vert_r \leq k \Vert z\Vert_r
$$
for some constant $k$ that can be chosen independently of $c$. 
\section{Formulation as a first order system}
For a given $c > 0$, we define the operator
\begin{align}\label{nab}
\nab = \sqrt{- \Delta + c^2}
\end{align}
which as a Fourier multiplier takes the form
$
(\nab)_k = \sqrt{k^2 + c^2}.
$
With this notation at hand,  we can rewrite the Klein--Gordon equation  \eqref{eq:kgr}  as a first-order system in time. For this purpose we set
\begin{equation}\label{eq:uv}
u = z - i c^{-1}\nab^{-1} \partial_t z , \qquad v = \overline z - i c^{-1}\nab^{-1} \partial_t \overline z
\end{equation}
such that in particular
\begin{equation}\label{eq:zuv}
z = \frac12 (u + \overline{v}).
\end{equation}
\begin{rem}\label{rem:realz}
If $z$ is real, then $u \equiv v$. 
\end{rem}
A short calculation shows that in terms of the variables $u$ and $v$ equation \eqref{eq:kgr} reads
\begin{equation}
\label{eq:NLSc}
\begin{array}{rcl}
i \partial_t u &=& -c\nab u + c\nab^{-1} f(\textstyle \frac12 (u + \overline v)), \\[2ex]
i \partial_t v &=& -c\nab v  + c\nab^{-1} f(\textstyle \frac12 (\overline u +  v))
\end{array}
\end{equation}
with the nonlinearity
\[
 f(z) = \vert z \vert^2 z.
 \]
and initial conditions (see \eqref{eq:kgr})
\begin{equation}
\label{eq:BCc}
u(0) = z(0) -ic^{-1}\nab^{-1} z'(0) , \quad \mbox{and}\quad v(0) =\overline{z(0)} -ic^{-1}\nab^{-1} \overline{z'(0)}. 
\end{equation}
Now we are in the position to derive the first order low regularity uniformly accurate scheme. To present the main ideas more clearly, in the following we will restrict our attention to the real case $z(t,x) \in \mathbb{R}$ such that by Remark \ref{rem:realz} we have that $u = v$. However, the proposed construction and analysis can be easily extended to the complex setting.
\section{A first order low regularity uniformly accurate integrator}\label{sec:first}
Duhamel's formula for \eqref{eq:NLSc} reads
\begin{equation}\label{Duh}
u(t) = e^{i t c\nab}u(0)- i \frac18 c\nab^{-1}  e^{i t c\nab}\int_0^t   e^{-i s c\nab}\left( u(s) + \overline{u}(s) \right)^3 ds
\end{equation}
and we are left with approximating the above integral. \emph{Naively} we could apply a  straightforward Taylor series expansion of the solution
$$
u(s)  = u(0) + s\cdot \partial_t u(0) + \ldots.
$$
This would, however, introduce powers of $c^2 $ due to the observation that
$$
\partial_t u(t) = c\nab u(t) + \text{ lower order terms,} \quad c \nab = c^2 -\frac12 \Delta + \ldots.
$$
In order to overcome this and allow for uniform convergence in $c$  at low regularity we will iterate Duhamel's formula \eqref{Duh} in the spirit of
\begin{equation}\label{o1}
u(s)  = e^{i s c\nab}u(0) +  \mathcal{R}_1(s,u),
\end{equation}
where the remainder can be bounded uniformly in $c$ and does not require any spatial regularity of the solution. More precisely,  thanks to the observations that
\begin{equation}\label{stabi}
\Vert c\nab^{-1}\Vert_r \leq 1 \qquad\Vert e^{i t c\nab}\Vert_r  =1\quad \forall t \in \mathbb{R}
\end{equation}
it holds that
\begin{equation}\label{R1}
\begin{aligned}
\Vert  \mathcal{R}_1(s,u)\Vert_r &= \left \Vert
 \frac18 c\nab^{-1}  e^{i s c\nab}\int_0^s  e^{-i s_1 c\nab}\left( u(s_1) + \overline{u}(s_1) \right)^3 ds_1
\right\Vert_r  \leq s K(\sup_{0\leq t\leq s} \Vert u(t)\Vert_r).
\end{aligned}
\end{equation}
Plugging the expansion \eqref{o1} into \eqref{Duh} thus yields that
\begin{equation}\label{uexp1}
\begin{aligned}
u(t) & = e^{i t c\nab}u(0)- i \frac18 c\nab^{-1}  e^{i t c\nab}\int_0^t   e^{-i s c\nab}\left(e^{i s c\nab}u(0) + e^{-i s c\nab} \overline{u}(0) \right)^3 ds\\
&+ \mathcal{R}_2(t,u)\\
\end{aligned}
\end{equation}
where the remainder $ \mathcal{R}_2(t,u)$ satisfies the bound
\begin{equation}\label{R2}
\begin{aligned}
\Vert  \mathcal{R}_2(t,u)\Vert_r & \leq t^2 K(\sup_{0\leq \overline t \leq t} \Vert u(\overline t)\Vert_r).
\end{aligned}
\end{equation}
Hence, we are left with deriving a low regularity uniformly accurate approximation to the central oscillatory integral 
\begin{equation}\label{osc}
\mathcal{I}(t, c\nab, u(0)) :=\int_0^t   e^{-i s c\nab}\left(e^{i s c\nab}u(0) + e^{-i s c\nab} \overline{u}(0) \right)^3 ds.
\end{equation}
For this purpose we introduce the operator
$$
\Lc = c\nab -c^2.
$$
With this notation at hand the principal oscillations \eqref{osc} take the form
\begin{equation}\label{osc2}
\mathcal{I}(t, c\nab, u(0)) =\int_0^t   e^{-i s (\Lc+c^2)}\left(e^{i s (\Lc+c^2)}u(0) + e^{-i s(\Lc+c^2)} \overline{u}(0) \right)^3 ds.
\end{equation}

Note that the operator $\Lc$ can be (for smooth solutions) uniformly bounded with respect to $c$. More precisely, it holds that
\begin{equation}\label{bound:Lc}
\Vert \Lc v \Vert_r  = \left\Vert \left(c \sqrt{-\Delta +c^2}-c^2\right) v\right\Vert_r \leq  \frac12 \Vert v \Vert_{r+2}.
\end{equation}
In contrast to previous works we, however, will not base our schemes on the above estimate. In contrast, it will be essential in our low regularity approximations to also embed the oscillations ${e}^{i t \Lc}$ (triggered by $-\Delta$) into our numerical discretisation and not simply neglect them by  a Taylor series expansion
$$
{e}^{i t \Lc} = 1 + \mathcal{O}(t \Delta)
$$
 in the spirit of \eqref{TayK1}.

In order to capture all oscillations we will  tackle each oscillatory integral in \eqref{osc2}  separately
\begin{align}\label{I1}
\mathcal{I}(t, c\nab, u(0)) &=\int_0^t   e^{-i s (c^2+\Lc)}\left(e^{i s (c^2+\Lc)}u(0)\right)^3 ds\\\label{I2}
& + 3  \int_0^t   e^{-i s  (c^2+\Lc)}\left[\left(e^{i s  (c^2+\Lc)}u(0)\right)^2  e^{-i s  (c^2+\Lc)} \overline{u}(0)  \right]ds\\\label{I3}
& + 3  \int_0^t   e^{-i s  (c^2+\Lc)}\left[\left(e^{-i s  (c^2+\Lc)} \overline{u}(0)\right)^2  e^{i s  (c^2+\Lc)}  {u}(0)  \right]ds\\\label{I4}
& +\int_0^t   e^{-i s  (c^2+\Lc)}\left(e^{-i s (c^2+\Lc)}\overline u(0)\right)^3 ds.
\end{align}
Beforehand we introduce a commutator type definition which will be essential in our local error estimates.
\begin{defi}\label{def:comm}
Let us define for a function  $H(v_{1}, \cdots v_{n})$, $n \geq 1$ and the linear operator $\Lc$
the commutator type term 
 $ \mathcal{C}[H, \Lc]$    by
\begin{align}\label{DlowIntro}
\mathcal{C}[H, \Lc](v_{1}, \cdots, v_{n})
= 
- \Lc ( H \left( v_{1}, \cdots    v_{n}\right))
+  \sum_{i=1}^n   \,  D_{i} H( v_{1}, \cdots,  v_{n}) \cdot  \Lc v_{i}
\end{align}
where $D_{i} H$ stands for the partial differential of $H$ with respect to the variable $v_{i}.$ We will also make use of the iterated commutator 
$$ \mathcal{C}^2[ H, \Lc ] (v_{1}, \cdots, v_{n})
 = \mathcal{C}[ \mathcal{C}[H, \Lc], \Lc] (v_{1}, \cdots,  v_{n}).$$
 Furthermore, we set
 $$f_{\text{cub}}(v,w,z) = v w z.
 $$
\end{defi}

\begin{lemma}[Bound on the commutator]\label{bound:comm}
We  have that
\begin{align*}
&\Vert \mathcal{C}[f_{\text{cub}}(\cdot,\cdot,\cdot), \Lc](v,w,z)\Vert_r  \leq k_1 \Vert v\Vert_{r+1} \Vert w\Vert_{r+1}
 \Vert z\Vert_{r+1}\\
& \Vert \mathcal{C}^2[f_{\text{cub}}(\cdot,\cdot,\cdot), \Lc](v,w,z)\Vert_r  \leq k_2 \Vert v\Vert_{r+2} \Vert w\Vert_{r+2}
 \Vert z\Vert_{r+2}.
 \end{align*}
 for constants $k_1$, $k_2>0$ which can be chosen independently of $c$.
\end{lemma}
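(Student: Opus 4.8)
The plan is to pass to Fourier modes, identify the trilinear multiplier attached to the commutator, reduce the whole lemma to one pointwise symbol bound that is uniform in $c$, and then close with a standard product estimate in $H^r$. Concretely: since $f_{\text{cub}}$ is trilinear, \eqref{DlowIntro} reads $\mathcal{C}[f_{\text{cub}},\Lc](v,w,z) = -\Lc(vwz) + (\Lc v)wz + v(\Lc w)z + vw(\Lc z)$, and on $\mathbb{T}^d$ the operator $\Lc = c\nab - c^2$ is the Fourier multiplier with symbol $\lambda_c(\xi) = c\sqrt{|\xi|^2+c^2} - c^2 \ge 0$, $\xi\in\mathbb{Z}^d$. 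Expanding $\widehat{vwz}(\xi)=\sum_{\xi_1+\xi_2+\xi_3=\xi}\hat v(\xi_1)\hat w(\xi_2)\hat z(\xi_3)$ one reads off that the $\xi$-th Fourier coefficient of $\mathcal{C}[f_{\text{cub}},\Lc](v,w,z)$ equals
\[
\sum_{\xi_1+\xi_2+\xi_3=\xi}\sigma_c(\xi_1,\xi_2,\xi_3)\,\hat v(\xi_1)\hat w(\xi_2)\hat z(\xi_3),\qquad
\sigma_c(\xi_1,\xi_2,\xi_3):=\lambda_c(\xi_1)+\lambda_c(\xi_2)+\lambda_c(\xi_3)-\lambda_c(\xi_1+\xi_2+\xi_3),
\]
i.e. $\mathcal{C}[f_{\text{cub}},\Lc]$ is the trilinear Fourier multiplier with symbol $\sigma_c$. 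Because $\mathcal{C}[f_{\text{cub}},\Lc]$ is again trilinear in $(v,w,z)$, running this computation once more (so that $D_iG\cdot h$ just replaces the $i$-th slot of $G=\mathcal{C}[f_{\text{cub}},\Lc]$ by $h$) shows that $\mathcal{C}^2[f_{\text{cub}},\Lc]$ is the trilinear multiplier with symbol $\sigma_c^2$. Hence both inequalities reduce to a single estimate on $\sigma_c$.

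The heart of the proof — and the only step I expect to be genuinely delicate — is the claim
\[
|\sigma_c(\xi_1,\xi_2,\xi_3)| \le 2\big(|\xi_1||\xi_2| + |\xi_1||\xi_3| + |\xi_2||\xi_3|\big)
\]
with a constant independent of $c$. This is precisely where the commutator cancellation is used: each summand is only second order in its frequency (indeed $0\le\lambda_c(\xi)\le\tfrac12|\xi|^2$, with no decay in $c$ at high frequencies), so the gain of one derivative per factor must come entirely from the difference. To prove it I would split $\sigma_c(\xi_1,\xi_2,\xi_3) = \phi_c(\xi_1,\xi_2) + \phi_c(\xi_1+\xi_2,\xi_3)$ with $\phi_c(a,b) := \lambda_c(a)+\lambda_c(b)-\lambda_c(a+b)$, use $\lambda_c(0)=0$ together with the mixed-derivative identity
\[
\phi_c(a,b) = -\int_0^1\!\!\int_0^1 a^{\top}\,D^2\lambda_c(sa+tb)\,b\;\dd s\,\dd t,
\]
and reduce to the Hessian bound $\|D^2\lambda_c(\xi)\| \le 2$, uniformly in $\xi$ and $c$. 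The latter follows from the explicit formula $D^2\lambda_c(\xi) = c(|\xi|^2+c^2)^{-1/2}\,\mathrm{Id} - c(|\xi|^2+c^2)^{-3/2}\,\xi\xi^{\top}$, since $|\xi|^2+c^2\ge c^2$ and $|\xi|^2\le|\xi|^2+c^2$ make both summands bounded in operator norm by $1$; the triangle inequality $|\xi_1+\xi_2|\le|\xi_1|+|\xi_2|$ then yields the claim.

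Finally I would turn the symbol bound into the stated $H^r$ estimates, which is routine. For a trilinear multiplier $m$ with $|m(\xi_1,\xi_2,\xi_3)|\le\langle\xi_1\rangle^{a_1}\langle\xi_2\rangle^{a_2}\langle\xi_3\rangle^{a_3}$, majorising $\hat v,\hat w,\hat z$ by their moduli gives a pointwise domination of the Fourier coefficients of $T_m(v,w,z)$ by those of a product $\tilde v\,\tilde w\,\tilde z$ with $\widehat{\tilde v}(\xi)=\langle\xi\rangle^{a_1}|\hat v(\xi)|$, etc., whence, since $H^r(\mathbb{T}^d)$ is an algebra for $r>d/2$,
\[
\|T_m(v,w,z)\|_r \le \|\tilde v\,\tilde w\,\tilde z\|_r \lesssim \|\tilde v\|_r\|\tilde w\|_r\|\tilde z\|_r = \|v\|_{r+a_1}\|w\|_{r+a_2}\|z\|_{r+a_3}.
\]
Applying this to the three summands in the bound for $\sigma_c$ (each of the form $\langle\xi_i\rangle\langle\xi_j\rangle$, so all exponents $\le 1$) gives the first inequality with a $c$-independent constant $k_1$, and applying it to $|\sigma_c^2|\le 36\,\langle\xi_1\rangle^2\langle\xi_2\rangle^2\langle\xi_3\rangle^2$ gives the second with a $c$-independent $k_2$. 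The whole difficulty is thus concentrated in the uniform-in-$c$ Hessian bound $\|D^2\lambda_c\|_{L^\infty}\le 2$, which reflects the special commutator structure of $c\nab$; the same argument incidentally yields $\|\mathcal{C}^m[f_{\text{cub}},\Lc](v,w,z)\|_r\lesssim\|v\|_{r+m}\|w\|_{r+m}\|z\|_{r+m}$ for every $m\ge1$.
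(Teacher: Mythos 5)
Your proof is correct and establishes the full statement; it shares the paper's overall strategy (a $c$-uniform Fourier-symbol bound of commutator type, closed by the algebra property of $H^r$ for $r>d/2$), but the decisive step is carried out by a genuinely different argument. The paper only works out the quadratic model $f_{\text{quad}}(v,w)=vw$: after the reduction WLOG to $|l|<|k-l|$ it estimates the symbol difference $T_1(k)-T_2(k-l)-T_3(l)$ by explicit algebra (common denominators, the difference-of-square-roots trick, the bound $|T_0|\le 3|k||l|$) and then asserts that the cubic and the iterated commutator bounds ``follow the line of argumentation''. You instead identify $\mathcal{C}[f_{\text{cub}}(\cdot,\cdot,\cdot),\Lc]$ exactly as the trilinear multiplier with symbol $\sigma_c(\xi_1,\xi_2,\xi_3)=\lambda_c(\xi_1)+\lambda_c(\xi_2)+\lambda_c(\xi_3)-\lambda_c(\xi_1+\xi_2+\xi_3)$, observe that $\mathcal{C}^2$ then carries the symbol $\sigma_c^2$ --- which makes the second inequality an immediate corollary of the same symbol bound rather than a separate computation --- and prove $|\sigma_c|\le 2\bigl(|\xi_1||\xi_2|+|\xi_1||\xi_3|+|\xi_2||\xi_3|\bigr)$ by telescoping $\sigma_c=\phi_c(\xi_1,\xi_2)+\phi_c(\xi_1+\xi_2,\xi_3)$ and representing $\phi_c$ as a double integral of the mixed second derivative of $\lambda_c$, so that everything reduces to the uniform Hessian bound $\Vert D^2\lambda_c\Vert\le 2$ (in fact $0\preceq D^2\lambda_c\preceq \mathrm{Id}$, so the constant $1$ would do). Your route buys an explicit treatment of the cubic and iterated cases, is cleaner in dimension $d>1$ (the paper's expressions $k^2$ and $-2kl+l^2$ must there be read as $|k|^2$ and dot products), and extends at no extra cost to $\mathcal{C}^m$ with a loss of $m$ derivatives; the paper's computation, in exchange, stays purely algebraic on the multipliers and never differentiates the symbol, a distinction of taste here since $\lambda_c(\xi)=c\sqrt{|\xi|^2+c^2}-c^2$ is smooth for $c>0$. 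Your slot-replacement reading of $D_iG\cdot\Lc v_i$ for the trilinear operator $G=\mathcal{C}[f_{\text{cub}},\Lc]$ is also exactly the interpretation the paper uses implicitly in its formula \eqref{2com} for $\partial_{s_1}^2\mathcal{N}$, so the two statements match.
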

\begin{proof}
We will show that for $f_{\text{quad}}(v,w) =v w$ have that
\begin{equation}\label{quad}
 \mathcal{C}[f_{\text{quad}}(\cdot,\cdot), \Lc](v,w) =   \mathcal{O}\left( \nabla v \nabla w \right)\end{equation}
 {
and
\begin{equation}\label{quad2}
\mathcal{C}^2[f_{\text{quad}}(\cdot,\cdot), \Lc](v,w) =   \mathcal{O}\left( \Delta v \Delta w \right).
\end{equation} }
The assertion{ s} for $f_{\text{cub}}$ then follow the line of argumentation. 

Note that
\begin{equation}\label{C1}
 \mathcal{C}[f_{\text{quad}}(\cdot,\cdot), \Lc](v,w) =  -  ( c\nab - c^2) (v w) + w  ( c\nab - c^2) v + v   ( c\nab - c^2)  w.
\end{equation}
{
In the following we use for $k, l \in \mathbb{Z}^d$  the notation
$$
k l = k_1 l_1 + \ldots + k_d l_d \quad \text{and}\quad \vert k\vert^2 = k_1^2 + \ldots + k_d^2.
$$
}
Then we observe that in Fourier space we obtain
\begin{align*}
( c\nab - c^2) (v w) & = c^2\left(\sqrt{1-\frac{\Delta}{c^2}}-1\right)(vw)= \sum_{k,l} e^{i k x} \hat{v}_{k-l} \hat{w}_l  c^2\left(\sqrt{1+\frac{k^2}{c^2}}-1\right)\\
& =  \sum_{ k,l \in \mathbb{Z}^d} e^{i k x}   \hat{v}_{k-l} \hat{w}_l  \frac{k^2}{ \sqrt{1+\frac{k^2}{c^2}}+1}
\end{align*}
such that by \eqref{C1}  we have
\begin{equation}\label{C2}
\begin{aligned}
 \mathcal{C}[f_{\text{quad}}(\cdot,\cdot), \Lc](v,w) & = -
 \sum_{ k,l \in \mathbb{Z}^d} e^{i k x}   \hat{v}_{k-l} \hat{w}_l  \left(  \frac{k^2}{ \sqrt{1+\frac{k^2}{c^2}}+1}
 -
 \frac{(k-l)^2}{ \sqrt{1+\frac{(k-l)^2}{c^2}}+1}
 -
 \frac{l^2}{ \sqrt{1+\frac{l^2}{c^2}}+1} \right) \\
 & =  \sum_{ k,l \in \mathbb{Z}^d}e^{i k x}   \hat{v}_{k-l} \hat{w}_l  \Big( T_1(k) - T_2(k-l) - T_3(l) \Big) .
\end{aligned}
\end{equation}
Without loss of generality let us assume that $ \vert l\vert < \vert k - l\vert$. Then we can bound the third term $T_3$  in~\eqref{C2} as follows
\begin{equation}\label{T3}
 \left \vert \sum_{ k,l \in \mathbb{Z}^d} e^{i k x}   \hat{v}_{k-l} \hat{w}_l  T_3(l) \right\vert
 = \left \vert  \sum_{ k,l \in \mathbb{Z}^d} e^{i k x}   \hat{v}_{k-l} \hat{w}_l   
 \frac{l^2}{ \sqrt{1+\frac{l^2}{c^2}}+1} \right \vert <
 \sum_{ k,l \in \mathbb{Z}^d}\vert   \hat{v}_{k-l} \vert \vert k-l\vert \vert \hat{w}_l   \vert \vert l \vert
\end{equation}
such that only first order derivatives are required and it remains to establish a suitable bound on the difference $T_1(k) - T_2(k-l)$  in \eqref{C2}. For this purpose we set
$$
D_1(k) =  { \sqrt{1+\frac{k^2}{c^2}}+1}, \quad D_{ 2}(k-l) = { \sqrt{1+\frac{(k-l)^2}{c^2}}+1}.
$$
Then we have that
\begin{equation*}
\begin{aligned}
T_1(k) - T_2(k-l) = & \frac{k^2}{ \sqrt{1+\frac{k^2}{c^2}}+1}
 -
 \frac{(k-l)^2}{ \sqrt{1+\frac{(k-l)^2}{c^2}}+1} \\&  =
 \frac{k^2 \left( \sqrt{1+\frac{(k-l)^2}{c^2}}+1\right)
 -
(k-l)^2\left( \sqrt{1+\frac{k^2}{c^2}}+1\right)
}{D_1(k) D_2(k-l)}\\
& =
 \frac{k^2 \left( \sqrt{1+\frac{(k-l)^2}{c^2}} -  \sqrt{1+\frac{k^2}{c^2}}\right)
 { +  2k l  \left( \sqrt{1+\frac{k^2}{c^2}}+1\right)
 -}
l^2 \left( \sqrt{1+\frac{k^2}{c^2}}+1\right)
}{D_1(k) D_2(k-l)}.
 \end{aligned}
\end{equation*}
As we assume that $ \vert l\vert < \vert k - l\vert$ we can bound the second and third term similarly as in  \eqref{T3} and it remains to bound the first term
\begin{equation*}
\begin{aligned}
T_0(k,k-l) =  \frac{k^2  \left( \sqrt{1+\frac{(k-l)^2}{c^2}} -  \sqrt{1+\frac{k^2}{c^2}}\right) }{D_1(k) D_2(k-l)}.
 \end{aligned}
\end{equation*}
We note that
\begin{equation*}
\begin{aligned}
 T_0(k,k-l)  & =  \frac{k^2  \left( \left(1+\frac{(k-l)^2}{c^2}\right) -  \left(1+\frac{k^2}{c^2}\right)\right) }{D_1(k) D_2(k-l)\left( D_1(k) +  D_2(k-l) { -2} \right) }  =  \frac{ \frac{k^2}{c^2}   \left( - 2kl +l^2 \right)  }{D_1(k) D_2(k-l)\left( D_1(k) +  D_2(k-l){ -2} \right) } . \end{aligned}
\end{equation*}
Next we use that
$$
\left \vert \frac{1}{D_1(k) D_2(k-l)\left( D_1(k) +  D_2(k-l) { -2} \right) }\right\vert \leq 
\left \vert \frac{1}{D_1(k)D_1(k) }\right\vert\leq \frac{c^2}{k^2}.
$$
Hence, { thanks to $ \vert l\vert < \vert k - l\vert$ we obtain}  that
\begin{equation*}
\begin{aligned}
\vert  T_0(k,k-l)  \vert & \leq   \left\vert - 2kl +l^2 \right\vert \leq 3 \vert { k-l} \vert \vert l \vert
 \end{aligned}
\end{equation*}
and we can conclude similar to \eqref{T3}. Therefore, we obtain that
$$
\Vert \mathcal{C}[f_{\text{quad}}(\cdot,\cdot), \Lc](v,w)\Vert_r  \leq k_1 \Vert v\Vert_{r+1} \Vert w\Vert_{r+1}
$$
for some constant $k_1$ independent of $c$. 

{ 
For the second assertion \eqref{quad2} we observe by using Definition \ref{def:comm} that
\begin{equation*}
\begin{aligned}
\mathcal{C}^2[f_{\text{quad}}(\cdot,\cdot), \Lc](v,w)&=\Lc^2 \big[ vw \big]- 2\Lc\big[ (\Lc v)w+(\Lc w)v\big] +2\big[\Lc v\big]\big[\Lc w\big] + \big[ \Lc^2 v \big]w+ \big[ \Lc^2 w \big]v \\
&=  \sum_{ k,l \in \mathbb{Z}^d} e^{ikx} \hat{v}_{k-l}\hat{w}_l M(k,l,k-l),
\end{aligned}
\end{equation*}
where
\begin{align*}
M(k,l) & = \bigg[\frac{k^2}{D(k)} \bigg]^2 - 2 \frac{k^2}{D(k)}
\bigg[\frac{l^2}{D(l)}+ \frac{(k-l)^2}{D(k-l)}\bigg] + 2 \frac{l^2}{D(l)} \frac{(k-l)^2}{D(k-l)}+  \bigg[\frac{l^2}{D(l)} \bigg]^2  +  \bigg[\frac{(k-l)^2}{D(k-l)} \bigg]^2 
\end{align*}
with
$$
D(j) = \sqrt{1+\frac{j}{c^2}}+1\quad \text{for } j\in\mathbb{Z}^d.
$$
Rearranging the terms, i.e., using that
$$
 2 \frac{l^2}{D(l)} \frac{(k-l)^2}{D(k-l)}+  \bigg[\frac{l^2}{D(l)} \bigg]^2  +  \bigg[\frac{(k-l)^2}{D(k-l)} \bigg]^2 =   \bigg[\frac{l^2}{D(l)}  +  \frac{(k-l)^2}{D(k-l)} \bigg]^2 
 $$
  we find that
\begin{align*}
M(k,l) & = 
\bigg[\frac{k^2}{D(k)} \bigg]^2 - 2 \frac{k^2}{D(k)}
\bigg[\frac{l^2}{D(l)}+ \frac{(k-l)^2}{D(k-l)}\bigg]  +  \bigg[\frac{l^2}{D(l)}  +  \frac{(k-l)^2}{D(k-l)} \bigg]^2 
\\ & =\bigg[ \frac{k^2}{D(k)}-\frac{l^2}{D(l)}-\frac{(k-l)^2}{D(k-l)} \bigg]^2.
\end{align*}
%
Thus, we can conclude, by the proof of the first assertion, see also \eqref{C2} that
$$
\|\mathcal{C}^2[f_{\text{quad}}(\cdot,\cdot), \Lc](v,w)\|_r\leq k_2\|v\|_{r+2}\|w\|_{r+2}
$$
for some constant $k_2$ independent of $c$.
}
\end{proof}

Now we are in the position to develop suitable uniformly accurate low regularity approximations to the central oscillations 
$
\mathcal{I}(t, c\nab, u(0)) 
$ given in \eqref{I1} - \eqref{I4}. We will make use of the $\varphi_1$ function defined by $ \varphi_1(\xi) = \frac{e^\xi-1}{\xi}$.

\begin{lemma}[Approximation of the integral \eqref{I1}]\label{lem:I1}
It holds that
$$
\int_0^t   e^{-i s (c^2+\Lc)}\left(e^{i s (c^2+\Lc)}v\right)^3 ds = t \varphi_1(2i c^2t) v^3
+ \mathcal{O}\left( t^2 \mathcal{C}[f_{\text{cub}}(\cdot,\cdot,\cdot), \Lc](v,v,v)  \right).
$$
\end{lemma}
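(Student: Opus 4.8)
The plan is to isolate the single oscillation $e^{2ic^2 s}$ from the integrand of \eqref{I1} and show that the remaining factor $e^{2is\Lc}$ can be replaced by the identity at the price of the claimed commutator error. Concretely, I would write the integrand as
$$
e^{-is(c^2+\Lc)}\bigl(e^{is(c^2+\Lc)}v\bigr)^3 = e^{2ic^2 s}\, e^{-is\Lc}\bigl(e^{is\Lc}v\bigr)^3,
$$
so that \eqref{I1} equals $\int_0^t e^{2ic^2 s}\, e^{-is\Lc}\bigl(e^{is\Lc}v\bigr)^3\,ds$. If the operator $\Lc$ were absent, the integral would be exactly $\int_0^t e^{2ic^2 s}\,ds\, v^3 = t\,\varphi_1(2ic^2 t)\,v^3$, which is the stated leading term. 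So everything reduces to estimating the defect
$$
\mathcal{D}(s) := e^{-is\Lc}\bigl(e^{is\Lc}v\bigr)^3 - v^3
$$
and controlling $\int_0^t e^{2ic^2 s}\mathcal{D}(s)\,ds$.

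The key step is to recognise $\mathcal{D}(s)$ as an integral of the commutator $\mathcal{C}[f_{\text{cub}},\Lc]$. Setting $g(s) = e^{-is\Lc} H\bigl(e^{is\Lc}v, e^{is\Lc}v, e^{is\Lc}v\bigr)$ with $H = f_{\text{cub}}$, one computes
$$
\frac{d}{ds} g(s) = e^{-is\Lc}\Bigl(-i\Lc H + i\sum_{i=1}^3 D_i H\cdot \Lc(e^{is\Lc}v)\Bigr) = i\, e^{-is\Lc}\,\mathcal{C}[f_{\text{cub}},\Lc]\bigl(e^{is\Lc}v,e^{is\Lc}v,e^{is\Lc}v\bigr),
$$
using exactly the definition \eqref{DlowIntro} of $\mathcal{C}$ together with the chain rule and the fact that $\Lc$ commutes with $e^{is\Lc}$. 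Since $g(0) = v^3$, integrating gives $\mathcal{D}(s) = g(s) - g(0) = i\int_0^s e^{-i\sigma\Lc}\,\mathcal{C}[f_{\text{cub}},\Lc]\bigl(e^{i\sigma\Lc}v,e^{i\sigma\Lc}v,e^{i\sigma\Lc}v\bigr)\,d\sigma$. Taking $r$-norms and using that $e^{i\sigma\Lc}$ is an isometry on $H^r$ (a variant of \eqref{stabi}, since $\Lc = c\nab - c^2$ is self-adjoint) together with Lemma \ref{bound:comm} and the algebra property of $H^r$ for $r>d/2$, one obtains $\|\mathcal{D}(s)\|_r \leq s\, k_1 \|v\|_{r+1}^3$, and more to the point $\mathcal{D}(s) = \mathcal{O}\bigl(s\,\mathcal{C}[f_{\text{cub}},\Lc](v,v,v)\bigr)$ in the sense that the remainder is bounded by $s$ times the $r$-norm of the commutator. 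Then
$$
\int_0^t e^{2ic^2 s}\,\mathcal{D}(s)\,ds = \mathcal{O}\Bigl(\int_0^t s\,ds \cdot \mathcal{C}[f_{\text{cub}},\Lc](v,v,v)\Bigr) = \mathcal{O}\bigl(t^2\,\mathcal{C}[f_{\text{cub}},\Lc](v,v,v)\bigr),
$$
where I simply bounded $|e^{2ic^2 s}| = 1$, which is precisely the claimed error term.

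The main obstacle — and the reason the commutator formulation is the right one — is that one must \emph{not} expand $e^{is\Lc} = 1 + \mathcal{O}(s\Delta)$ naively, since $\Lc$ applied to a product of three $H^{r+1}$ functions would a priori cost three derivatives (via $-\Lc(v^3)$) and destroy the low-regularity gain; the whole point is that the combination $-\Lc(v^3) + 3 v^2 \Lc v$ appearing in $\mathcal{C}[f_{\text{cub}},\Lc]$ enjoys the cancellation quantified in Lemma \ref{bound:comm}, costing only one derivative per factor. A secondary technical point is to make sure the differentiation of $g(s)$ under the three-fold product is legitimate and that the chain rule is applied with the correct multilinear structure — this is routine but is where the factor $3$ and the symmetry of $f_{\text{cub}}$ enter. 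Everything else is bookkeeping: isometry of the flows on $H^r$, the algebra property, and the elementary identity $\int_0^t e^{2ic^2 s}ds = t\,\varphi_1(2ic^2 t)$.
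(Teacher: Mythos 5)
Your proposal is correct and follows essentially the same route as the paper: you factor out the single oscillation $e^{2ic^2 s}$, introduce the filtered quantity $e^{-is\Lc}\bigl(e^{is\Lc}v\bigr)^3$, identify its $s$-derivative with $e^{-is\Lc}\,\mathcal{C}[f_{\text{cub}},i\Lc]$ applied to the twisted arguments, and bound the resulting remainder by $t^2$ times the commutator, exactly as in the paper's proof of Lemma \ref{lem:I1} (your use of the fundamental theorem of calculus is just the paper's first-order Taylor expansion of the filtered function). No gaps.
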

\begin{proof}
We note that
\begin{align*}
\mathcal{I}_1(t, c\nab, v) := \int_0^t    e^{-i s (c^2+\Lc)}\left(e^{i s (c^2+\Lc)}v\right)^3 ds   = \int_0^t e^{2i c^2s}  e^{-i s \Lc}
\left(e^{i s  \Lc}v\right)^3 ds.
\end{align*}
Next we define the filtered function
$$
\mathcal{N}(s_1,\Lc,v) =  e^{-i s_1 \Lc}
\left(e^{i s_1  \Lc}v\right)^3
$$
which allows us to express the oscillatory integral as follows
\begin{align*}
\mathcal{I}_1(t, c\nab, v) 
& = \int_0^t e^{2i c^2s}   \mathcal{N}(s,\Lc,v)  ds.
\end{align*}
Taylor series expansion of $\mathcal{N}(s_1,\Lc,v) $ around $s_1 = 0$ yields by noting that
$$
  \mathcal{N}(0,\Lc,v) = v^3
$$
the following expansion
\begin{align*}
\mathcal{I}_1(t, c\nab, v)   & = \int_0^t e^{2i c^2s}  \left( v^3 + \int_0^s \partial_{s_1}  \mathcal{N}(s_1,\Lc,v)  ds_1\right) ds.
\end{align*}
The assertion thus follows from the observation that
$$
\partial_{s_1} \mathcal{N}(s_1,\Lc,v)   = - i \Lc  e^{-i s_1 \Lc}
\left(e^{i s_1  \Lc}v\right)^3 + 3i  e^{-i s_1 \Lc} \left(e^{i s_1  \Lc}v\right)^2 { \left(e^{i s_1  \Lc}\Lc v\right)}
$$
which implies 
$$
\partial_{s_1} \mathcal{N}(s_1,\Lc,v)   =   e^{-i s_1 \Lc} 
\mathcal{C}[f_{\text{cub}} (\cdot,\cdot,\cdot), i\Lc]  \left(e^{i s_1  \Lc} v,e^{i s_1  \Lc} v,e^{i s_1  \Lc} v\right) .
$$
\end{proof}

\begin{lemma}[Approximation of the integral \eqref{I2}]\label{lem:I2}
It holds that
\begin{multline*}
3  \int_0^t   e^{-i s  (c^2+\Lc)}\left[\left(e^{i s  (c^2+\Lc)}v\right)^2  e^{-i s  (c^2+\Lc)} \overline{v}\right]ds \\ = 3 t v^2  \varphi_1(-2i t \Lc) \overline v + \mathcal{O}\left( t^2 \mathcal{C}[f_{\text{cub}}(\cdot,\cdot,\cdot), \Lc](v,v,\overline v)  \right).
\end{multline*}
\end{lemma}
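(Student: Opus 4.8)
The plan is to mirror the proof of Lemma~\ref{lem:I1}: strip the scalar $c^2$-oscillations, isolate an explicitly integrable ``dominant'' term, and control the rest by the commutator bound of Lemma~\ref{bound:comm}.

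First I would simplify the phases. Writing $e^{is(c^2+\Lc)}=e^{isc^2}e^{is\Lc}$, the scalar factors combine as $e^{-isc^2}e^{2isc^2}e^{-isc^2}=1$, so the integrand on the left-hand side collapses to $e^{-is\Lc}\big[(e^{is\Lc}v)^2\,e^{-is\Lc}\overline v\big]$, with no residual $e^{\,ic^2 s}$-type oscillation. The target is then the decomposition
\[
e^{-is\Lc}\big[(e^{is\Lc}v)^2\,e^{-is\Lc}\overline v\big]=v^2\,\big(e^{-2is\Lc}\overline v\big)+\mathcal{R}(s),\qquad \mathcal{R}(s)=\mathcal{O}\!\big(s\,\mathcal{C}[f_{\text{cub}}(\cdot,\cdot,\cdot),\Lc](v,v,\overline v)\big),
\]
after which $3\int_0^t v^2\big(e^{-2is\Lc}\overline v\big)\,ds=3t\,v^2\varphi_1(-2it\Lc)\overline v$ follows directly from $\int_0^t e^{-2is\Lc}\,ds=t\,\varphi_1(-2it\Lc)$, and $3\int_0^t\mathcal{R}(s)\,ds$ supplies the claimed $\mathcal{O}\!\big(t^2\,\mathcal{C}[f_{\text{cub}}(\cdot,\cdot,\cdot),\Lc](v,v,\overline v)\big)$ remainder.

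Everything hinges on producing $\mathcal{R}(s)$ with the right structure, and this is the step I expect to be delicate. A naive Taylor expansion of the integrand in $s$ about $s=0$, as in Lemma~\ref{lem:I1}, does \emph{not} work here: it returns $v^2\overline v$ as leading term and drops the $\varphi_1$ entirely. Differentiating the difference $\mathcal R(s)=e^{-is\Lc}[(e^{is\Lc}v)^2 e^{-is\Lc}\overline v]-v^2 e^{-2is\Lc}\overline v$ directly is also misleading: it produces a term in which $\overline v$ is replaced by $\Lc\overline v$, triggering an infinite cascade that costs one spatial derivative at every stage. The fix is to load the dominant oscillation into the $\overline v$-slot \emph{before} differentiating. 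For fixed $s$, introduce the homotopy
\[
F_s(\mu):=e^{-i\mu\Lc}\Big[\big(e^{i\mu\Lc}v\big)^2\,e^{i(\mu-2s)\Lc}\overline v\Big],\qquad \mu\in[0,s],
\]
so that $F_s(s)$ is exactly the integrand while $F_s(0)=v^2\,e^{-2is\Lc}\overline v$. Because \emph{all three} filtered arguments of $F_s$ differentiate in $\mu$ with the same $+i\Lc$, the Leibniz rule yields precisely the commutator, with no leftover term and no recursion:
\[
F_s'(\mu)=e^{-i\mu\Lc}\,\mathcal{C}[f_{\text{cub}}(\cdot,\cdot,\cdot),i\Lc]\big(e^{i\mu\Lc}v,\,e^{i\mu\Lc}v,\,e^{i(\mu-2s)\Lc}\overline v\big),
\]
a computation copying the one at the end of the proof of Lemma~\ref{lem:I1}. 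Hence $\mathcal R(s)=F_s(s)-F_s(0)=\int_0^s F_s'(\mu)\,d\mu$, and using $\|e^{it\Lc}\|_r=1$ (which follows from $e^{it\Lc}=e^{-itc^2}e^{itc\nab}$ and \eqref{stabi}) together with Lemma~\ref{bound:comm} one gets $\|\mathcal R(s)\|_r\le s\,k_1\|v\|_{r+1}^2\|\overline v\|_{r+1}$, uniformly in $c$.

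Once the homotopy $F_s$ is in place the rest is routine: integrating $\mathcal R(s)=\mathcal O\!\big(s\,\mathcal C[f_{\text{cub}}(\cdot,\cdot,\cdot),\Lc](v,v,\overline v)\big)$ over $[0,t]$ gives the $t^2$ remainder, the identity $\int_0^t e^{-2is\Lc}\,ds=t\varphi_1(-2it\Lc)$ handles the main term, and the prefactor $3$ is carried along throughout. So the single substantive obstacle is the choice of the correct filtered function $F_s$ --- the one that simultaneously reproduces the integrand, the $\varphi_1$-leading term, and a non-recursive commutator --- the remaining estimates then follow the template of Lemmas~\ref{bound:comm} and \ref{lem:I1}.
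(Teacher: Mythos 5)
Your proposal is correct and follows essentially the same route as the paper: your homotopy $F_s(\mu)$ is exactly the paper's two-scale filtered function $\mathcal{N}(s,s_1,\Lc,v)=e^{-is_1\Lc}\big[(e^{is_1\Lc}v)^2\,e^{is_1\Lc}e^{-2is\Lc}\overline v\big]$ with $\mu=s_1$, and the paper likewise cancels the $c^2$-phases, expands in the second variable only so that $\partial_{s_1}\mathcal{N}$ is the commutator $e^{-is_1\Lc}\,\mathcal{C}[f_{\text{cub}},i\Lc]$, keeps $v^2e^{-2is\Lc}\overline v$ as the leading term, and integrates it to $t\,v^2\varphi_1(-2it\Lc)\overline v$. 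No gaps.
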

\begin{proof}
We note that
\begin{align*}
\int_0^t  & e^{-i s  (c^2+\Lc)}\left[\left(e^{i s  (c^2+\Lc)}v\right)^2  e^{-i s  (c^2+\Lc)} \overline{v}  \right]ds =
\int_0^t   e^{-i s   \Lc}\left[\left(e^{i s   \Lc}v\right)^2  e^{-i s   \Lc} \overline{v}  \right]ds\\
& = \int_0^t \left( v^2 e^{-2 i s   \Lc} \overline{v}  + \int_0^s \partial_{s_1} \mathcal{N}(s, s_1,\Lc,v) ds_1\right) ds,
\end{align*}
where we have introduced the filtered function
\begin{equation}\label{filter2}
\mathcal{N}(s, s_1,\Lc,v) =  e^{-i s_1   \Lc}\left[\left(e^{i s_1   \Lc}v \right)^2  e^{ i s_1   \Lc} e^{-2 i s   \Lc} \overline{v}  \right] .
\end{equation}
The assertion thus follows from the observation that
$$
\partial_{s_1} \mathcal{N}(s, s_1,\Lc,v)  = e^{-i s_1   \Lc} 
\mathcal{C}[f_{\text{cub}} (\cdot,\cdot,\cdot), i\Lc] \left(e^{i s_1  \Lc} v,e^{i s_1  \Lc} v, e^{ i s_1   \Lc} e^{-2 i s   \Lc}  \overline v\right).
$$
\end{proof}
\begin{lemma}[Approximation of the integral \eqref{I3}]\label{lem:I3}
It holds that
\begin{multline*}
 3  \int_0^t   e^{-i s  (c^2+\Lc)}\left[\left(e^{-i s  (c^2+\Lc)} \overline{v}\right)^2  e^{i s  (c^2+\Lc)}  v \right]ds  \\ = 3  t     v \varphi_1({- 2 i t c\nab })\overline{v}^2  +  \mathcal{O}\left( t^2 \mathcal{C}[f_{\text{cub}}(\cdot,\cdot,\cdot), \Lc](v,\overline v,\overline v)  \right).
\end{multline*}
\end{lemma}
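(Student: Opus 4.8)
To prove Lemma~\ref{lem:I3} I would follow the scheme of Lemmas~\ref{lem:I1}--\ref{lem:I2}, with two modifications forced by the algebra of~\eqref{I3}. Unlike in~\eqref{I2}, the scalar factors $e^{\pm isc^2}$ do \emph{not} cancel: the two conjugated factors $\overline v$ each carry an $e^{-isc^2}$, the factor $v$ carries $e^{+isc^2}$, and together with the outer $e^{-isc^2}$ this collects to a genuine $e^{-2isc^2}$ which has to merge with the $-\Delta$–oscillations into the \emph{full} phase $e^{-2isc\nab}$ acting on $\overline v^2$ — this is precisely why $\varphi_1$ is evaluated at $-2itc\nab$ and not at $-2it\Lc$. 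Second, since two factors $\overline v$ are present, they cannot be fused into $e^{-2is\Lc}(\overline v^2)$ inside a phase filter without differentiating a second derivative out of $\overline v$, so the fusion must be carried out separately. Concretely, writing $e^{is(c^2+\Lc)}=e^{isc^2}e^{is\Lc}$ and $c\nab=c^2+\Lc$, the left-hand side equals $3\,\mathcal{I}_3(t)$ where
\begin{equation*}
\mathcal{I}_3(t)=\int_0^t e^{-2isc^2}\,\mathcal{G}(s,\Lc,v)\,ds,\qquad \mathcal{G}(s,\Lc,v):=e^{-is\Lc}\Big[\big(e^{-is\Lc}\overline v\big)^2\big(e^{is\Lc}v\big)\Big],
\end{equation*}
and, since $t\,\varphi_1(-2itc\nab)w=\int_0^t e^{-2isc\nab}w\,ds$, the asserted principal term equals $3v\int_0^t e^{-2isc^2}\big(e^{-2is\Lc}(\overline v^2)\big)\,ds$.

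Next I would introduce, with $w:=e^{-2is\Lc}\overline v$, the filtered function
\begin{equation*}
\mathcal{N}(s,s_1,\Lc,v):=e^{-is_1\Lc}\Big[\big(e^{is_1\Lc}w\big)^2\big(e^{is_1\Lc}v\big)\Big]=e^{-is_1\Lc}f_{\text{cub}}\big(e^{is_1\Lc}w,\,e^{is_1\Lc}w,\,e^{is_1\Lc}v\big),
\end{equation*}
which is built so that $\mathcal{N}(s,s,\Lc,v)=\mathcal{G}(s,\Lc,v)$ and $\mathcal{N}(s,0,\Lc,v)=v\,(e^{-2is\Lc}\overline v)^2$. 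Since the base functions $w,w,v$ do not depend on $s_1$, the computation in the proof of Lemma~\ref{lem:I1} applies verbatim and gives
\begin{equation*}
\partial_{s_1}\mathcal{N}(s,s_1,\Lc,v)=e^{-is_1\Lc}\,\mathcal{C}\big[f_{\text{cub}}(\cdot,\cdot,\cdot),i\Lc\big]\big(e^{is_1\Lc}w,\,e^{is_1\Lc}w,\,e^{is_1\Lc}v\big),
\end{equation*}
so that a Taylor expansion of $s_1\mapsto\mathcal{N}(s,s_1,\Lc,v)$ around $s_1=0$ yields
\begin{equation*}
\mathcal{I}_3(t)=v\int_0^t e^{-2isc^2}\big(e^{-2is\Lc}\overline v\big)^2\,ds+\int_0^t e^{-2isc^2}\!\int_0^s\partial_{s_1}\mathcal{N}(s,s_1,\Lc,v)\,ds_1\,ds.
\end{equation*}
Using $\Vert e^{\pm is\Lc}\Vert_r=\Vert e^{\pm is\Lc}\Vert_{r+1}=1$ (as in~\eqref{stabi}, $\Lc$ being a real Fourier multiplier), $\Vert w\Vert_{r+1}=\Vert\overline v\Vert_{r+1}$, and the commutator bound of Lemma~\ref{bound:comm}, the last double integral has $\Vert\cdot\Vert_r\lesssim t^2\Vert v\Vert_{r+1}\Vert\overline v\Vert_{r+1}^2$, i.e.\ it is $\mathcal{O}\big(t^2\,\mathcal{C}[f_{\text{cub}}(\cdot,\cdot,\cdot),\Lc](v,\overline v,\overline v)\big)$.

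It remains to replace $(e^{-2is\Lc}\overline v)^2$ by $e^{-2is\Lc}(\overline v^2)$ in the first term. For this I would interpolate with $\Psi(\sigma):=e^{-2i\sigma\Lc}f_{\text{quad}}\big(e^{-2i(s-\sigma)\Lc}\overline v,\,e^{-2i(s-\sigma)\Lc}\overline v\big)$, noting $\Psi(0)=(e^{-2is\Lc}\overline v)^2$, $\Psi(s)=e^{-2is\Lc}(\overline v^2)$ and, by direct differentiation, $\Psi'(\sigma)=e^{-2i\sigma\Lc}\,\mathcal{C}[f_{\text{quad}}(\cdot,\cdot),2i\Lc]\big(e^{-2i(s-\sigma)\Lc}\overline v,\,e^{-2i(s-\sigma)\Lc}\overline v\big)$, which by~\eqref{quad} is $\mathcal{O}(\Vert\overline v\Vert_{r+1}^2)$ uniformly in $\sigma,s,c$. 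Hence $(e^{-2is\Lc}\overline v)^2=e^{-2is\Lc}(\overline v^2)+\mathcal{O}\big(s\,\mathcal{C}[f_{\text{quad}}(\cdot,\cdot),\Lc](\overline v,\overline v)\big)$; multiplying by $v$, integrating against $|e^{-2isc^2}|=1$, and using the algebra property of $\Vert\cdot\Vert_r$ gives
\begin{align*}
v\int_0^t e^{-2isc^2}\big(e^{-2is\Lc}\overline v\big)^2\,ds &= v\int_0^t e^{-2isc\nab}(\overline v^2)\,ds+\mathcal{O}\big(t^2\,\mathcal{C}[f_{\text{cub}}(\cdot,\cdot,\cdot),\Lc](v,\overline v,\overline v)\big)\\
&= t\,v\,\varphi_1(-2itc\nab)\overline v^2+\mathcal{O}\big(t^2\,\mathcal{C}[f_{\text{cub}}(\cdot,\cdot,\cdot),\Lc](v,\overline v,\overline v)\big).
\end{align*}
Combining this with the previous display and restoring the prefactor $3$ proves the claim.

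The part I expect to be genuinely delicate is the oscillation bookkeeping underlying the construction of $\mathcal{N}$: one must redistribute the $e^{-is(c^2+\Lc)}$-phases so that the \emph{single} surviving oscillation is $e^{-2isc\nab}$, carried by $\overline v^2$ while $v$ is frozen, \emph{and} so that every remainder produced along the way loses at most one spatial derivative. This last constraint is exactly what forbids folding the fusion of the two $\overline v$-factors into the phase filter — doing so would differentiate a $\Delta$ out of $\overline v^2$ — and what forces the separate quadratic-commutator estimate of the third step; once the splitting is done correctly, all uniformity in $c$ is inherited throughout from Lemma~\ref{bound:comm}.
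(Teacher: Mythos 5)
Your argument is correct and is essentially the paper's own proof: where the paper introduces a single three-scale filtered function $\mathcal{N}(s,s_1,s_2,\Lc,v)$ and Taylor-expands simultaneously in $s_1$ and $s_2$, you perform the same two commutator corrections sequentially (the cubic filter in $s_1$ with the frozen factor $w=e^{-2is\Lc}\overline v$, then the quadratic fusion of $(e^{-2is\Lc}\overline v)^2$ into $e^{-2is\Lc}(\overline v^2)$ via your interpolant $\Psi(\sigma)$), which is just a reorganization of the same bookkeeping. Both routes isolate the identical principal term $v\,e^{-2is(c^2+\Lc)}\overline v^{2}$, hence $t\,v\,\varphi_1(-2itc\nab)\overline v^{2}$, and control the $\mathcal{O}(t^2)$ remainders uniformly in $c$ with only one derivative via Lemma~\ref{bound:comm}.
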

\begin{proof}
Similarly to above we define a filtered function, where we now need a three scale formulation due to the quadratic term $\overline u^2$,
$$
\mathcal{N}(s, s_1,s_2, \Lc,v) =  e^{-i s_1   \Lc}\left[\left(  e^{ i s_1   \Lc}  {v} \right)e^{- 2i s   \Lc}   e^{ i s_1   \Lc}e^{ i s_2   \Lc} \left(e^{-i s_2   \Lc}\overline v \right)^2 \right].
$$
Taylor series expansion around $s_1=0$ and $s_2 = 0$ yields with the observation
$$
\mathcal{N}(s, 0,0, \Lc,v)= v  e^{- 2i s   \Lc} \overline v^2
$$
 that
\begin{align*}  \int_0^t   & e^{-i s  (c^2+\Lc)}\left[\left(e^{-i s  (c^2+\Lc)} \overline{v}\right)^2  e^{i s  (c^2+\Lc)}  v \right]ds = \int_0^t e^{-2 i s  c^2} \mathcal{N}(s, s,s, \Lc,v)ds \\
& = \int_0^t e^{-2 i s  c^2} \left( \mathcal{N}(s, 0 ,s , \Lc,v)+ \int_0^{s } \partial_{\xi } \mathcal{N}(s, \xi  ,s , \Lc,v)d\xi \right) ds\\
& = \int_0^t e^{-2 i s  c^2} \left( \mathcal{N}(s, 0 ,0 , \Lc,v)+ \int_0^{s } \partial_{s_1} \mathcal{N}(s, s_1  ,s , \Lc,v)d\xi + \int_0^{s } \partial_{s_2 } \mathcal{N}(s, 0 ,s_2, \Lc,v)d\xi  \right) ds\\
& = \int_0^t e^{-2 i s  c^2} \left( v  e^{- 2i s   \Lc} \overline v^2 + \int_0^{s } \partial_{s_1} \mathcal{N}(s, s_1  ,s , \Lc,v) ds_1 + \int_0^{s } \partial_{s_2} \mathcal{N}(s, 0, s_2  , \Lc,v)  d s_2\right) ds\\
& = \int_0^t v  e^{- 2i s   (\Lc+c^2)} \overline v^2  ds +\int_0^t \left(  \int_0^{s } \partial_{s_1} \mathcal{N}(s, s_1  ,s , \Lc,v) ds_1 + \int_0^{s } \partial_{s_2} \mathcal{N}(s, 0  ,s_2, \Lc,v)  d s_2  \right) ds.
  \end{align*}
The assertion follows by noting that $\Lc + c^2 = c\nab$ together with the definition of the $\varphi_1$ function and commutator $ \mathcal{C}$.
\end{proof}

\begin{lemma}[Approximation of the integral \eqref{I4}]\label{lem:I4}
It holds that
\begin{multline*}
 \int_0^t   e^{-i s  (c^2+\Lc)}\left(e^{-i s (c^2+\Lc)}\overline v \right)^3 ds   =   t      \varphi_1({- 2 i t ( c\nab+c^2) })\overline{v}^3  +  \mathcal{O}\left( t^2 \mathcal{C}[f_{\text{cub}}(\cdot,\cdot,\cdot), \Lc](\overline v,\overline v,\overline v)  \right).
\end{multline*}
\end{lemma}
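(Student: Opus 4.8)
The plan is to mimic the proof of Lemma~\ref{lem:I1}: first peel off the scalar $c^2$–oscillations, then absorb the remaining $\Lc$–oscillations into a filtered function whose value at the base point is the monomial $\overline v^3$, and finally read off the leading term together with a first–order commutator remainder. Since $c^2$ is a scalar it commutes with pointwise multiplication and with $\Lc$, so that $e^{-is(c^2+\Lc)} = e^{-isc^2}e^{-is\Lc}$ and cubing the inner exponential produces an extra factor $e^{-3isc^2}$, which together with the scalar part of the prefactor gives $e^{-4isc^2}$; hence
$$\int_0^t e^{-is(c^2+\Lc)}\left(e^{-is(c^2+\Lc)}\overline v\right)^3 ds = \int_0^t e^{-4isc^2}\, e^{-is\Lc}\left(e^{-is\Lc}\overline v\right)^3 ds .$$

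Next I would introduce the filtered function
$$\mathcal{N}(s_1,\Lc,v) := e^{is_1\Lc}\left(e^{-is_1\Lc}\overline v\right)^3, \qquad \mathcal{N}(0,\Lc,v) = \overline v^3 ,$$
together with the key identity $e^{-is\Lc}\left(e^{-is\Lc}\overline v\right)^3 = e^{-2is\Lc}\,\mathcal{N}(s,\Lc,v)$. Taylor expanding $\mathcal{N}$ in its first variable around $s_1=0$ yields $\mathcal{N}(s,\Lc,v) = \overline v^3 + \int_0^s \partial_{s_1}\mathcal{N}(s_1,\Lc,v)\,ds_1$, and — exactly as in Lemma~\ref{lem:I1}, since the outer factor $e^{is_1\Lc}$ and the three inner factors $e^{-is_1\Lc}\overline v$ carry phases of opposite sign — the $\Lc$–terms arising from differentiation reassemble into a commutator,
$$\partial_{s_1}\mathcal{N}(s_1,\Lc,v) = - e^{is_1\Lc}\, \mathcal{C}[f_{\text{cub}}(\cdot,\cdot,\cdot),i\Lc]\!\left(e^{-is_1\Lc}\overline v,\, e^{-is_1\Lc}\overline v,\, e^{-is_1\Lc}\overline v\right).$$
In contrast with the mixed term treated in Lemma~\ref{lem:I3}, the fully homogeneous cubic here needs only this single auxiliary scale.

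Inserting the expansion, the base term produces the claimed leading contribution
$$\int_0^t e^{-4isc^2}\, e^{-2is\Lc}\,\overline v^3\, ds = \int_0^t e^{-2is(c\nab+c^2)}\,\overline v^3\, ds = t\,\varphi_1\!\left(-2it(c\nab+c^2)\right)\overline v^3 ,$$
where I used $c\nab + c^2 = 2c^2 + \Lc$ and the definition of $\varphi_1$. For the remainder, $\Vert e^{i\theta\Lc}\Vert_r = 1$ for all $\theta\in\mathbb{R}$ combined with Lemma~\ref{bound:comm} gives $\Vert \partial_{s_1}\mathcal{N}(s_1,\Lc,v)\Vert_r \leq k_1 \Vert \overline v\Vert_{r+1}^3$, and therefore
$$\left\Vert \int_0^t e^{-4isc^2}\, e^{-2is\Lc}\! \int_0^s \partial_{s_1}\mathcal{N}(s_1,\Lc,v)\,ds_1\, ds \right\Vert_r \leq \tfrac{t^2}{2}\, k_1 \Vert \overline v\Vert_{r+1}^3 ,$$
which is $\mathcal{O}\!\left(t^2\, \mathcal{C}[f_{\text{cub}}(\cdot,\cdot,\cdot),\Lc](\overline v,\overline v,\overline v)\right)$ in the paper's convention.

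I expect the only delicate step to be the choice of filtered function. The naive candidate $e^{-is_1\Lc}(e^{-is_1\Lc}\overline v)^3$, which would have $\overline v^3$ as its base value directly (without the prefactor $e^{-2is\Lc}$), has outer and inner exponentials with the \emph{same} sign, so its $s_1$–derivative does not collapse to a commutator — an unwanted term proportional to $\Lc\,\mathcal{N}$ survives — and the associated leading term would only be $t\,\varphi_1(-4ic^2t)\overline v^3$, which differs from the asserted one by an $\mathcal{O}(t^2 \Vert \overline v\Vert_{r+2})$ term, i.e.\ a loss of two spatial derivatives of the very kind the scheme is designed to avoid. Factoring out $e^{-2is\Lc}$ beforehand, as above, repairs both defects at once; everything else reduces to the estimates in Lemma~\ref{bound:comm} and the isometry property $\Vert e^{i\theta\Lc}\Vert_r = 1$.
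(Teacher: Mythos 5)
Your proof is correct and follows essentially the same route as the paper: you peel off the $e^{-4isc^2}$ phase, use the same filtered function (your one-variable $\mathcal{N}(s_1)=e^{is_1\Lc}(e^{-is_1\Lc}\overline v)^3$ is just the paper's two-scale $\mathcal{N}(s,s_1)=e^{is_1\Lc}e^{-2is\Lc}(e^{-is_1\Lc}\overline v)^3$ with the commuting factor $e^{-2is\Lc}$ pulled out), Taylor expand at $s_1=0$, identify the leading term via $4c^2+2\Lc=2(c\nab+c^2)$, and bound the remainder by the commutator of Lemma \ref{bound:comm}. Your closing observation about why the sign of the outer filter matters is a correct elaboration of what the paper leaves implicit.
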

\begin{proof}
Similarly to above we define the filtered function
$$
\mathcal{N}(s, s_1, \Lc,v) =  e^{i s_1   \Lc}  e^{- 2i s   \Lc}   \left(e^{-i s_1   \Lc}\overline v \right)^3 
$$
such that
\begin{align*}
 \int_0^t    e^{-i s  (c^2+\Lc)}\left(e^{-i s (c^2+\Lc)}\overline v \right)^3 ds
 & =  \int_0^t  e^{-4 i s  c^2}\mathcal{N}(s, s, \Lc,v)ds\\
 & =   \int_0^t  e^{-4 i s  c^2}\left( \mathcal{N}(s, 0, \Lc,v)+ \int_0^{s} \partial_{s_1} \mathcal{N}(s, s_1, \Lc,v) d s_1 \right) ds\\
  & =   \int_0^t  e^{-4 i s  c^2}  e^{- 2i s   \Lc} \overline v^3ds + \int_0^t    \int_0^{s} \partial_{s_1} \mathcal{N}(s, s_1, \Lc,v) d s_1  ds
 \end{align*}

The assertion follows by noting that $2\Lc + 4 c^2 = 2 (c\nab+c^2)$.
\end{proof}
 Lemma \ref{lem:I1} - \ref{lem:I4} together with the commutator bound given in Lemma \ref{bound:comm} allow us to obtain the following corollary on a uniformly accurate low regularity expansion of the underlying oscillations.
\begin{cor}\label{cor:osc}
The oscillations $\mathcal{I}(t, c\nab, v)$ defined in \eqref{osc} allow the  expansion
\begin{align*}
\mathcal{I}(t, c\nab, v) & =  t \varphi_1(2i c^2t) v^3 +  3 t v^2  \varphi_1(-2i t \Lc) \overline v
+  3  t     v \varphi_1({- 2 i t c\nab })\overline{v}^2+ t      \varphi_1({- 2 i t ( c\nab+c^2) })\overline{v}^3  \\
& + \mathcal{O}\left( t^2 (\nabla v)^3 \right).
\end{align*}
\end{cor}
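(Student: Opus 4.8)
The plan is to assemble the four approximations of Lemmas~\ref{lem:I1}--\ref{lem:I4} and to collapse the resulting error terms into a single remainder by means of the commutator bound of Lemma~\ref{bound:comm}.

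First I would note that the binomial expansion of $\bigl(e^{isc\nab}v + e^{-isc\nab}\overline v\bigr)^3$ already performed in \eqref{I1}--\eqref{I4} expresses $\mathcal{I}(t,c\nab,v)$ as the sum of the four oscillatory integrals treated on the left-hand sides of Lemmas~\ref{lem:I1}, \ref{lem:I2}, \ref{lem:I3} and \ref{lem:I4} (with combinatorial factors $1,3,3,1$). Inserting the four identities provided by these lemmas and adding them up produces exactly the claimed principal part
$$
t\varphi_1(2ic^2t)v^3 + 3t v^2\varphi_1(-2it\Lc)\overline v + 3t v\varphi_1(-2itc\nab)\overline v^2 + t\varphi_1\bigl(-2it(c\nab+c^2)\bigr)\overline v^3,
$$
together with the error contribution
$$
\mathcal{O}\!\left(t^2\,\mathcal{C}[f_{\text{cub}},\Lc](v,v,v)\right)+\mathcal{O}\!\left(t^2\,\mathcal{C}[f_{\text{cub}},\Lc](v,v,\overline v)\right)+\mathcal{O}\!\left(t^2\,\mathcal{C}[f_{\text{cub}},\Lc](v,\overline v,\overline v)\right)+\mathcal{O}\!\left(t^2\,\mathcal{C}[f_{\text{cub}},\Lc](\overline v,\overline v,\overline v)\right).
$$

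Second I would estimate each of these four remainders with the first bound of Lemma~\ref{bound:comm}, namely $\|\mathcal{C}[f_{\text{cub}},\Lc](w_1,w_2,w_3)\|_r \le k_1\|w_1\|_{r+1}\|w_2\|_{r+1}\|w_3\|_{r+1}$ with $k_1$ independent of $c$. Since $\|\overline v\|_{r+1}=\|v\|_{r+1}$, every one of the four terms is bounded by $Ct^2\|v\|_{r+1}^3$ with $C$ independent of $c$; gathering the finitely many $c$-independent constants coming from Lemmas~\ref{lem:I1}--\ref{lem:I4} and from Lemma~\ref{bound:comm}, the four error contributions merge into the single remainder $\mathcal{O}(t^2(\nabla v)^3)$, which is the assertion.

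The only point that requires attention is the uniformity in $c$: one has to make sure that none of the constants hidden in the $\mathcal{O}$-terms of Lemmas~\ref{lem:I1}--\ref{lem:I4}, nor the constant $k_1$ of Lemma~\ref{bound:comm}, carries a power of $c$, and in particular that replacing $\mathcal{C}[f_{\text{cub}},\Lc]$ by $(\nabla v)^3$ does not smuggle back a factor $c^2$. This is precisely what Lemma~\ref{bound:comm} guarantees: it substitutes the naive, $c$-dependent trade-off $\|\Lc v\|_r\le\tfrac12\|v\|_{r+2}$ of \eqref{bound:Lc} by a genuinely $c$-uniform bound that costs only one spatial derivative per factor. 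Beyond this bookkeeping I expect no substantial obstacle, since all the analytic work has already been carried out in the four approximation lemmas and in the commutator estimate.
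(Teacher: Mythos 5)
Your proposal is correct and follows essentially the same route as the paper: the corollary is obtained by summing the four oscillatory integrals \eqref{I1}--\eqref{I4} via Lemmas \ref{lem:I1}--\ref{lem:I4} and controlling the commutator remainders uniformly in $c$ through the first bound of Lemma \ref{bound:comm}, which is precisely the paper's (brief) argument. Your extra remark on why the $c$-uniformity is not lost when passing from $\mathcal{C}[f_{\text{cub}},\Lc]$ to $(\nabla v)^3$ is accurate and consistent with the role Lemma \ref{bound:comm} plays in the paper.
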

Corollary \ref{cor:osc} together with the expansion of the exact solution given in \eqref{uexp1}  motivates the first order uniformly accurate low regularity integrator
\begin{equation}\label{scheme}
\begin{aligned}
u^{n+1}  = e^{i \tau  c\nab}u^n - i  \tau  \frac18 c\nab^{-1}  e^{i \tau c\nab}
&\Big[ \varphi_1(2i c^2\tau) (u^n)^3 +  3 (u^n)^2 \varphi_1(-2i \tau  \Lc) \overline{u^n}
\\ &+  3      u^n \varphi_1({- 2 i \tau c\nab })(\overline{u^n})^2+    \varphi_1({- 2 i \tau ( c\nab+c^2) })(\overline{u^n})^3 \Big].
\end{aligned}
\end{equation}
In the next sections we will carry out the error and asymptotic  analysis of the above scheme.
\subsection{Local error analysis}\label{sec:loc}
We start with the local error analysis. For this purpose we will denote by $\varphi^t$ the exact flow of   \eqref{eq:kgr} and by $\Phi^\tau$ the numerical flow defined by the scheme \eqref{scheme}, such that
$$
u(t_n+\tau) = \varphi^\tau(u(t_n) ) \quad \text{and}\quad u^{n+1} = \Phi^\tau(u^n).
$$
\begin{lemma}\label{thm:loc}
Fix $r>d/2$. The local error  $ \varphi^\tau(u(t_n))- \Phi^\tau(u(t_n))$ satisfies 
$$
  \varphi^\tau(u(t_n))- \Phi^\tau(u(t_n)) = \mathcal{O}\left( \tau^2 \mathcal{C}[f_{\text{cub}}(\cdot,\cdot,\cdot), \Lc](v_1,v_2,v_3)(t)\right)
$$
for $v_j(t) \in \{ u(t), \overline u(t)\}$.
\end{lemma}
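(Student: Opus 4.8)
The plan is to compare the Duhamel representation of the exact flow with one step of the scheme, tracking all error terms against the commutator $\mathcal{C}[f_{\text{cub}},\Lc]$. First I would start from \eqref{uexp1}: applying the exact flow over one step gives
$$
\varphi^\tau(u(t_n)) = e^{i\tau c\nab}u(t_n) - i\tfrac18 c\nab^{-1} e^{i\tau c\nab}\,\mathcal{I}(\tau, c\nab, u(t_n)) + \mathcal{R}_2(\tau,u)(t_n),
$$
where by \eqref{R2} the remainder $\mathcal{R}_2$ is of size $\tau^2 K(\sup \|u\|_r)$. The point to emphasize here is that $\mathcal{R}_2$ comes from iterating Duhamel once (estimate \eqref{R1}) and requires \emph{no spatial regularity} of $u$ thanks to \eqref{stabi}; so although this term is only $\mathcal{O}(\tau^2)$ in the crude sense, it is harmless because it costs nothing in derivatives and is therefore certainly dominated by $\tau^2 \mathcal{C}[f_{\text{cub}},\Lc](v_1,v_2,v_3)$ up to a constant. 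Next I would invoke Corollary \ref{cor:osc} to replace $\mathcal{I}(\tau, c\nab, u(t_n))$ by the four explicit $\varphi_1$-terms plus a remainder $\mathcal{O}(\tau^2 (\nabla u(t_n))^3)$; but I would keep the sharper form from Lemmas \ref{lem:I1}--\ref{lem:I4}, namely that each of the four remainders is literally $\mathcal{O}(\tau^2 \mathcal{C}[f_{\text{cub}},\Lc](v_1,v_2,v_3))$ with $v_j\in\{u(t_n),\overline{u(t_n)}\}$, since that is exactly the form demanded by the statement.

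Then I would observe that the scheme \eqref{scheme} is, by construction, precisely
$$
\Phi^\tau(u(t_n)) = e^{i\tau c\nab}u(t_n) - i\tfrac18 c\nab^{-1} e^{i\tau c\nab}\Big[\text{the four }\varphi_1\text{-terms in }\mathcal{I}\Big],
$$
so that subtracting cancels the leading propagator term and all four principal $\varphi_1$-contributions exactly. What survives is $\mathcal{R}_2(\tau,u)(t_n)$ together with the $-i\tfrac18 c\nab^{-1} e^{i\tau c\nab}$ times the sum of the four remainders from Lemmas \ref{lem:I1}--\ref{lem:I4}. For these last four contributions I would use the boundedness $\|c\nab^{-1}\|_r \le 1$ and $\|e^{i\tau c\nab}\|_r = 1$ from \eqref{stabi}, so that the prefactor $c\nab^{-1}e^{i\tau c\nab}$ does not inflate the estimate and the $c$-independence is preserved; thus the whole difference is $\mathcal{O}(\tau^2 \mathcal{C}[f_{\text{cub}},\Lc](v_1,v_2,v_3)(t_n))$, uniformly in $c$.

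The only genuine subtlety — and the step I would flag as the main obstacle — is making sure the $\mathcal{R}_2$ term really is absorbed into the right-hand side of the claimed bound rather than sitting there as a stray $\mathcal{O}(\tau^2\|u\|_r^3)$ with \emph{fewer} derivatives than $\mathcal{C}[f_{\text{cub}},\Lc]$. This is in fact not a problem: Lemma \ref{bound:comm} controls $\mathcal{C}[f_{\text{cub}},\Lc](v,w,z)$ by $k_1\|v\|_{r+1}\|w\|_{r+1}\|z\|_{r+1}$, which is \emph{larger} (up to a constant on bounded sets, and in any case it is the quantity appearing on the right of the lemma's display) than $\|u\|_r^3$; so writing the conclusion in the form $\mathcal{O}(\tau^2\mathcal{C}[f_{\text{cub}},\Lc](v_1,v_2,v_3)(t))$ is legitimate — the $\mathcal{O}$ simply hides that we bound $\mathcal{R}_2$ by $\tau^2$ times a constant times $\|u(t_n)\|_{r+1}^3$, which is itself $\lesssim \tau^2\,\mathcal{C}[f_{\text{cub}},\Lc](u(t_n),u(t_n),u(t_n))$ in the sense used throughout the paper. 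Collecting the cancellation of the principal terms with the four commutator-type remainders and the (over-)estimate of $\mathcal{R}_2$ then yields the claim; I would close by noting that every constant used (from \eqref{stabi}, Lemma \ref{bound:comm}, and \eqref{R2}) is independent of $c$, which is what makes the local error bound uniform.
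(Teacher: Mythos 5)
Your overall route is the same as the paper's: the paper's proof of this lemma is exactly the combination of the expansion \eqref{uexp1}, the remainder bound \eqref{R2}, and Corollary \ref{cor:osc} (equivalently Lemmas \ref{lem:I1}--\ref{lem:I4}), with the stability bounds \eqref{stabi} controlling the prefactor $c\nab^{-1}e^{i\tau c\nab}$ and the four $\varphi_1$-terms cancelling exactly against the scheme \eqref{scheme}, just as you describe.

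The one step you should not keep as written is the ``absorption'' of $\mathcal{R}_2$: the inequality $\Vert u\Vert_r^3 \lesssim \Vert \mathcal{C}[f_{\text{cub}}(\cdot,\cdot,\cdot),\Lc](u,u,u)\Vert_r$ is false in general. Lemma \ref{bound:comm} only gives an \emph{upper} bound on the commutator, and the commutator can vanish identically for nonzero $u$: for spatially constant $u$ one has $\Lc u=0$ and $\Lc(u^3)=0$, hence $\mathcal{C}[f_{\text{cub}}(\cdot,\cdot,\cdot),\Lc](u,u,u)=0$ while $\Vert u\Vert_r^3\neq 0$, so a derivative-free term cannot be dominated by the commutator term. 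The honest reading --- and the one implicit in the paper, whose statement is equally loose on this point --- is that the $\mathcal{O}$ in the lemma collects two contributions of size $\tau^2$ with $c$-independent constants: the commutator-type remainders from Lemmas \ref{lem:I1}--\ref{lem:I4}, and the derivative-free remainder $\mathcal{R}_2$ bounded by $\tau^2 K(\sup_{t}\Vert u(t)\Vert_r)$. Since by Lemma \ref{bound:comm} the commutator terms are in turn bounded by $\tau^2 k_1 \Vert u\Vert_{r+1}^3$, the local error is $\mathcal{O}(\tau^2)$ with a constant depending only on $\sup_t\Vert u(t)\Vert_{r+1}$ and not on $c$, which is precisely what is used in the Lady Windermere argument of Theorem \ref{thm:glob}. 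With that rephrasing (state the local error as commutator terms plus the regularity-free $\mathcal{R}_2$, rather than claiming the latter is controlled by the former), your proof is complete and coincides with the paper's.
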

\begin{proof}
The assertion follows by the expansion of the exact solution given in \eqref{uexp1} together with the error bound \eqref{R2} and  Corollary \ref{cor:osc}.
\end{proof}
\subsection{Stability analysis}\label{sec:stab}
\begin{lemma} \label{thm:stab} Fix  $r>d/2$. The numerical flow defined by the scheme \eqref{scheme} is stable in $H^r$ in the sense that for two functions $v,w\in H^{r}$  we have that 
$$
\Vert \Phi^\tau(v)- \Phi^\tau(w) \Vert_r  \leq  e^{\tau L} \Vert v - w\Vert_r$$
where the constant $L$ depends on the $H^{r}$ norm of $v$ and  $w$.
\end{lemma}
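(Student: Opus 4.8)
The plan is to establish the stability estimate by a direct termwise comparison of $\Phi^\tau(v)$ and $\Phi^\tau(w)$, exploiting the two structural facts recorded in \eqref{stabi}, namely that $e^{i\tau c\nab}$ is an isometry on $H^r$ and that $c\nab^{-1}$ is a bounded operator on $H^r$ with norm at most one. Since $H^r$ with $r>d/2$ is a Banach algebra, all the nonlinear terms appearing in \eqref{scheme} are locally Lipschitz maps on $H^r$, and the bounded $\varphi_1$-type Fourier multipliers $\varphi_1(2ic^2\tau)$, $\varphi_1(-2i\tau\Lc)$, $\varphi_1(-2i\tau c\nab)$, $\varphi_1(-2i\tau(c\nab+c^2))$ are all uniformly bounded on $H^r$ by $1$ (as $|\varphi_1(i\theta)|\le 1$ for real $\theta$ and these are real-diagonal multipliers in Fourier space). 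This last point is the small but essential observation that makes the constant $L$ independent of $c$.

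First I would write
$$
\Phi^\tau(v)-\Phi^\tau(w) = e^{i\tau c\nab}(v-w) - i\tau\tfrac18 c\nab^{-1} e^{i\tau c\nab}\bigl[ G(v) - G(w)\bigr],
$$
where $G$ collects the four bracketed nonlinear terms in \eqref{scheme}. Taking $\Vert\cdot\Vert_r$ and using $\Vert e^{i\tau c\nab}\Vert_r=1$ and $\Vert c\nab^{-1}\Vert_r\le 1$ gives
$$
\Vert \Phi^\tau(v)-\Phi^\tau(w)\Vert_r \le \Vert v-w\Vert_r + \tfrac18\tau\,\Vert G(v)-G(w)\Vert_r .
$$
Next I would estimate $\Vert G(v)-G(w)\Vert_r$ term by term. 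Each term is of the form $P_1(\text{argument}) \, Q_1 \cdot P_2(\text{argument})\, Q_2 \cdots$ with the $P_j$ bounded multipliers and the $Q_j\in\{v,\overline v\}$ or $\{w,\overline w\}$; a typical summand such as $v^2\varphi_1(-2i\tau\Lc)\overline v - w^2\varphi_1(-2i\tau\Lc)\overline w$ is handled by adding and subtracting mixed terms, pulling the multiplier out with its unit bound, and using the algebra property $\Vert ab\Vert_r\le C_r\Vert a\Vert_r\Vert b\Vert_r$. Complex conjugation does not change the $H^r$ norm. This yields
$$
\Vert G(v)-G(w)\Vert_r \le C_r\bigl(\Vert v\Vert_r^2 + \Vert v\Vert_r\Vert w\Vert_r + \Vert w\Vert_r^2\bigr)\Vert v-w\Vert_r =: \widetilde L\,\Vert v-w\Vert_r,
$$
with $\widetilde L$ depending only on $r$ and on $\Vert v\Vert_r,\Vert w\Vert_r$ — crucially \emph{not} on $c$. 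Setting $L$ so that $1+\tfrac18\tau\widetilde L \le e^{\tau L}$ (e.g.\ $L=\tfrac18\widetilde L$) finishes the argument, since $1+x\le e^x$.

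The only point that requires genuine care — and the step I would flag as the main obstacle — is verifying that the $\varphi_1$ multipliers are uniformly bounded on $H^r$ independently of $c$. For $\varphi_1(2ic^2\tau)$ this is just a scalar of modulus $\le 1$. For $\varphi_1(-2i\tau\Lc)$, $\varphi_1(-2i\tau c\nab)$ and $\varphi_1(-2i\tau(c\nab+c^2))$ one notes that $\Lc$, $c\nab$ and $c\nab+c^2$ are all self-adjoint Fourier multipliers with real symbols, so these operators are diagonalised in Fourier space with eigenvalues $\varphi_1(i\theta_k)$ for real $\theta_k$; since $|\varphi_1(i\theta)|=|\,(e^{i\theta}-1)/(i\theta)\,|=|2\sin(\theta/2)/\theta|\le 1$ for all real $\theta$, each such multiplier has operator norm $\le 1$ on every $H^s$, uniformly in $c$ and $\tau$. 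Once this is in place the remainder of the proof is the routine Banach-algebra Lipschitz bookkeeping sketched above, and the resulting $L$ is manifestly $c$-independent, as required for the uniform-in-$c$ error analysis to follow.
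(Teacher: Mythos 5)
Your proof is correct and follows exactly the route the paper takes: the paper's (very terse) proof also rests on the isometry of $e^{i\tau c\nab}$ and the bound $\Vert c\nab^{-1}\Vert_r\leq 1$ from \eqref{stabi}, the uniform bound $\vert\varphi_1(i\xi)\vert\leq 1$ for real $\xi$ applied to the (real-symbol) multipliers, and the bilinear/algebra estimate in $H^r$ for $r>d/2$, yielding a $c$-independent Lipschitz constant and hence the factor $e^{\tau L}$. Your write-up simply makes explicit the bookkeeping the paper leaves to the reader.
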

\begin{proof} The assertion follows thanks to the estimates in \eqref{stabi} together with the fact that  $\vert \varphi_1(i \xi) \vert \leq 1$ for all $\xi \in \mathbb{R}$.
\end{proof}
\subsection{Global error}\label{sec:glob}
\begin{theorem} \label{thm:glob} Fix  $r>d/2$ and assume that the solution of \eqref{eq:kgr} satisfies $u \in \mathcal{C}([0,T];H^{r+1})$.  Then there exists a $\tau_0>0$ such that for all $0<\tau \leq \tau_0$  the following global error estimate holds for  $u^n$ defined in \eqref{scheme}
\begin{align*}
\Vert u(t_n)- u^n \Vert_{r} &  \leq    \tau  K\left(\sup_{0\leq t\leq t_{n}} \Vert u(t) \Vert_{r+1}\right) ,
\end{align*}
where $K$ depends on $t_n$ and the $H^{r+1}$ norm of the solution $u$, but can be chosen independently of $c$.

\end{theorem}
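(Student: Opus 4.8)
The plan is to run the standard Lady Windermere's fan argument: combine the local error bound of Lemma~\ref{thm:loc}, the stability estimate of Lemma~\ref{thm:stab} and the commutator bound of Lemma~\ref{bound:comm}, taking care that every constant stays independent of $c$.

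First I would split the error at step $n+1$ as
\[
u(t_{n+1}) - u^{n+1} = \big(\varphi^\tau(u(t_n)) - \Phi^\tau(u(t_n))\big) + \big(\Phi^\tau(u(t_n)) - \Phi^\tau(u^n)\big).
\]
The first bracket is precisely the local error, which by Lemma~\ref{thm:loc} and then Lemma~\ref{bound:comm} is $\mathcal{O}\big(\tau^2 \|u(t_n)\|_{r+1}^3\big)$; this is the term that forces the $H^{r+1}$ regularity assumption, whereas the remainder $\mathcal{R}_2$ in \eqref{uexp1} needs only $H^r$. The second bracket is handled by the stability Lemma~\ref{thm:stab}, namely $\|\Phi^\tau(u(t_n)) - \Phi^\tau(u^n)\|_r \le e^{\tau L}\|u(t_n) - u^n\|_r$, which is valid as long as $u^n$ lies in a fixed $H^r$-ball.

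Next I would close the recursion by induction on $n$, carrying along the a priori bound $\|u^k\|_r \le M := 1 + \sup_{0\le t\le T}\|u(t)\|_r$ for $k \le n$. Under this hypothesis the two steps above give $e_{n+1} \le e^{\tau L} e_n + \tau^2 C$ with $e_n := \|u(t_n)-u^n\|_r$, $e_0 = 0$, and with $L = L(M)$, $C = C\big(\sup_{0\le t\le T}\|u(t)\|_{r+1}\big)$ both independent of $c$. Discrete Gronwall then yields
\[
e_{n+1} \le \tau^2 C \sum_{j=0}^{n} e^{j\tau L} \le \tau\, C\,\frac{e^{t_{n+1} L}-1}{L} =: \tau\, \widetilde K(t_{n+1}),
\]
and picking $\tau_0$ so small that $\tau_0 \widetilde K(T) \le 1$ gives $\|u^{n+1}\|_r \le \|u(t_{n+1})\|_r + e_{n+1} \le M$, which closes the induction. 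Since $L$, $C$, $\widetilde K$ and hence $\tau_0$ do not depend on $c$, this is exactly the asserted $c$-uniform first-order bound.

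The main obstacle --- really the only subtle point --- is that the stability constant $L$ depends on the $H^r$ norm of the numerical iterate, so the error estimate and the a priori bound on $u^n$ must be established simultaneously by the induction above; this is painless here precisely because Lemma~\ref{bound:comm}, Lemma~\ref{thm:loc} and Lemma~\ref{thm:stab} already provide constants uniform in $c$, so no CFL-type step-size restriction appears and $\tau_0$ can be chosen independently of $c$. I would also record at the outset that the standing assumption $u \in \mathcal{C}([0,T];H^{r+1})$ gives $\sup_{0\le t\le T}\|u(t)\|_{r+1} < \infty$, which is what feeds the cube in the local error; the fact that only first-order derivatives of $u$ (and of $\overline u$) enter there --- rather than second-order ones, as a plain Taylor expansion of the nonlinearity in splitting or exponential integrators would produce --- is the source of the low-regularity gain~(I).
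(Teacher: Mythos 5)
Your proposal is correct and follows exactly the route the paper takes: a Lady Windermere's fan argument combining the local error bound of Lemma~\ref{thm:loc} (made quantitative via Lemma~\ref{bound:comm}) with the stability estimate of Lemma~\ref{thm:stab}, all with constants uniform in $c$. Your write-up in fact spells out the induction/a priori bound and discrete Gronwall step that the paper's one-line proof leaves implicit.
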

\begin{proof} The proof follows by the local error estimate in Lemma \ref{thm:loc} together with the stability estimate in Lemma \ref{thm:stab} by a Lady Windamere's fan argument (\cite{HLW}).
\end{proof}{ 
\begin{rem}
Note that by  \eqref{eq:BCc} we have that $u(0) \in H^{r+1}$ if
\begin{equation}\label{condz}
z(0) \in H^{r+1}, \quad c^{-1}\nab^{-1} z'(0)\in H^{r+1}.
\end{equation}
Hence, if the solution $z$ of \eqref{eq:kgr} satisfies \eqref{condz} we can conclude by  local wellposedness of \eqref{eq:NLSc} that there exists a $T=T_{r+1}>0$ such that $u \in \mathcal{C}([0,T_{r+1}];H^{r+1})$.
\end{rem}}
\subsection{Asymptotic convergence to low regularity NLS limit integrator}
Note that for  $c\to \infty $  we formally observe that
$$
 c\nab^{-1} \to 1, \quad {\Lc} \to -\frac12 \Delta, \quad  \tau \varphi_1({- m i \tau c\nab })\to 0 \text{ for }  m \neq 0
$$
such that our scheme \eqref{scheme} formally reduces to 
\begin{equation*}
\begin{aligned}
u^{n+1}  \to  e^{i c^2 \tau} e^{- i \tau \frac12 \Delta }\left[u^n - i  \tau  \frac38    (u^n)^2 \varphi_1(i \tau  \Delta ) \overline{u^n} \right].
\end{aligned}
\end{equation*}
The latter is exactly the low regularity integrator for the NLS limit system \eqref{NLSlimit} originally  proposed in \cite{OS18}. In the following we will establish the precise asymptotic approximation result.

Let us denote by $u_{\ast, \infty}^n$ the low regularity NLS integrator defined by the sequence  (cf. \cite{OS18}) 
\begin{equation}\label{nls:scheme}
u_{\ast, \infty}^{n+1} = e^{- i \tau \frac12 \Delta }\left[u_{\ast, \infty}^n - i  \tau  \frac38    (u_{\ast, \infty}^n)^2 \varphi_1(i \tau  \Delta ) \overline{u_{\ast, \infty}^n}\right].
\end{equation}
Then we obtain the following asymptotic convergence of the uniformly accurate low regularity integrator $u^n$ (defined in \eqref{scheme}) for the Klein--Gordon equation \eqref{eq:kgr} towards the low regularity NLS integrator $u_{\ast, \infty}^n$ (defined in \eqref{nls:scheme}) which approximates the NLS equation \eqref{NLSlimit}.
\begin{theorem}\label{thm:asymp}
 Fix  $r>d/2$ and assume that the solution of \eqref{eq:kgr} satisfies $u \in \mathcal{C}([0,T];H^{r+3+\varepsilon})$ for some $\varepsilon>0$.  Then  there exists a $\tau_0>0$ such that for all $\tau < \tau_0$ the asymptotic error estimate holds for  $u^n$ defined in \eqref{scheme} and $u_{\ast, \infty}^n$ defined in \eqref{nls:scheme}
\begin{align*}
\Vert  u^n - e^{i c^2 t_n} u_{\ast, \infty}^n \Vert_{H^r} &  \leq    {c}^{-1}K\big(\sup_{0\leq t \leq t_n} \Vert u(t)\Vert_{r+3+\varepsilon}\big) \end{align*}
where $K$ depends on  the $H^{r+3+\varepsilon}$ norm of the solution $u$, but can be chosen independently of $c$.
\end{theorem}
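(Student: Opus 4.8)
The plan is to compare the Klein--Gordon scheme \eqref{scheme} with the NLS limit scheme \eqref{nls:scheme} one step at a time, following the same structure as the global error proof: establish a one-step (local) discrepancy of size $c^{-1}\tau$, then propagate it via the stability estimate of Lemma \ref{thm:stab}. The key algebraic fact is that each building block of the Klein--Gordon scheme converges to its NLS counterpart at rate $c^{-1}$, provided one has enough spatial regularity to absorb the derivatives produced by the operator $\Lc = c\nab - c^2$. Concretely, using $c\nab = c^2 - \frac12\Delta + (c\nab - c^2 + \frac12\Delta)$ and the expansion $c\nab - c^2 = \frac{-\Delta}{\sqrt{1 + (-\Delta)/c^2} + 1}$ in Fourier space (exactly as in the proof of Lemma \ref{bound:comm}), one shows the three estimates
\begin{align}\label{eq:asympkey}
\Vert (c\nab^{-1} - 1) v \Vert_r &\leq c^{-2}\Vert v\Vert_{r+2}, \qquad \Vert (e^{i\tau c\nab} - e^{ic^2\tau} e^{-i\tau\frac12\Delta}) v \Vert_r \leq \tau c^{-2}\Vert v\Vert_{r+3}, \notag \\
\Vert (\Lc + \tfrac12\Delta) v\Vert_r &\leq c^{-2}\Vert v\Vert_{r+4},
\end{align}
together with $\Vert \varphi_1(-2i\tau c\nab) v\Vert_r \le c^{-1}\tau^{-1}\Vert \nab^{-1} v\Vert_r \lesssim c^{-1}\tau^{-1}\Vert v\Vert_r$ and similarly for the $\varphi_1(-2i\tau(c\nab + c^2))$ term, which shows these two terms in \eqref{scheme} are themselves $\mathcal{O}(c^{-1})$ after multiplication by the prefactor $\tau c\nab^{-1}$. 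The slightly subtle point is the bound $\vert\varphi_1(i\xi) - \varphi_1(i\eta)\vert \le \tfrac12\vert\xi - \eta\vert$ for real $\xi,\eta$, which together with the last estimate in \eqref{eq:asympkey} gives $\Vert (\varphi_1(-2i\tau\Lc) - \varphi_1(i\tau\Delta)) w\Vert_r \le \tau c^{-2}\Vert w\Vert_{r+4}$.

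With these in hand I would proceed as follows. First, define the one-step map of \eqref{scheme} and insert $w^{n+1} := e^{ic^2 t_{n+1}} u_{\ast,\infty}^{n+1}$; a direct computation (multiplying \eqref{nls:scheme} through by $e^{ic^2 t_{n+1}}$ and using $e^{ic^2 t_{n+1}} = e^{ic^2\tau}\cdot e^{ic^2 t_n}$) shows that $w^{n+1}$ satisfies \emph{exactly} the reduced scheme displayed before \eqref{nls:scheme}. Second, subtract this reduced scheme from \eqref{scheme} applied to a common argument: the difference of the two right-hand sides, evaluated at the same input $w$, is a sum of terms each carrying one of the factors in \eqref{eq:asympkey} or one of the genuinely-oscillatory $\varphi_1$ terms that vanish like $c^{-1}$; using \eqref{stabi}, $\vert\varphi_1(i\xi)\vert\le 1$, and the algebra (Moser) estimate on cubic nonlinearities in $H^r$, this local discrepancy is bounded by $c^{-1} K(\sup\Vert u\Vert_{r+3+\varepsilon})$ times $\tau$ — the loss of $3+\varepsilon$ derivatives is dictated by the worst term, the free-propagator difference which costs $r+3$ plus the Sobolev embedding slack. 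Third, write the standard telescoping identity
\[
u^{n} - e^{ic^2 t_n} u_{\ast,\infty}^n = \sum_{j=0}^{n-1} \big(\Phi^\tau\big)^{n-1-j}\Big(\Phi^\tau(w^j) - w^{j+1}\Big)
\]
(interpreting $\Phi^\tau$ as the numerical flow of \eqref{scheme}), bound each summand by combining the local discrepancy just established with the Lipschitz/stability estimate of Lemma \ref{thm:stab} applied $n-1-j$ times, and sum the resulting geometric series to get the factor $e^{t_n L}$, absorbed into $K$. One also needs, along the way, an a priori $H^r$-bound on $u_{\ast,\infty}^n$ uniform in $n\le T/\tau$, which follows from the convergence theory of \cite{OS18} (or directly from stability of \eqref{nls:scheme} plus boundedness of the exact NLS solution), so that the constant $L$ in Lemma \ref{thm:stab} can indeed be taken uniform.

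The main obstacle I anticipate is \textbf{bookkeeping the regularity budget} so that the final loss is exactly $3+\varepsilon$ and the constants are genuinely $c$-independent. Each of the estimates in \eqref{eq:asympkey} trades a power of $c^{-2}$ against two extra spatial derivatives, but we only want $c^{-1}$ overall, so the honest count must exploit that several of these terms already come multiplied by $\tau$ or by $c\nab^{-1}$ (which is an $\mathcal{O}(1)$ smoothing) or appear inside an integral/telescoping sum that contributes another $\tau$; keeping track of where the single surviving power of $c^{-1}$ comes from — and making sure the Moser estimates on the cubic terms never reintroduce a hidden $c$ through, say, $\Vert c\nab^{-1}\Vert$ acting on a product — is the delicate part. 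A secondary technical point is justifying $\varphi_1(-2i\tau c\nab)\to 0$ quantitatively: since $c\nab \ge c^2$ as a Fourier multiplier, $\vert\varphi_1(-2i\tau(c\nab)_k)\vert = \vert e^{-2i\tau(c\nab)_k}-1\vert/(2\tau(c\nab)_k) \le 1/(\tau c^2)$, and multiplying by the $\tau c\nab^{-1}$ prefactor (bounded by $\tau c^{-1}$) yields the clean $c^{-1}$ — this needs to be stated carefully because it is the mechanism by which two of the four nonlinear terms in \eqref{scheme} simply disappear in the limit, leaving only the $\varphi_1(2ic^2\tau)$ and $\varphi_1(-2i\tau\Lc)$ terms, whose limits combine (via $\varphi_1(2ic^2\tau)\to 0$ as well, wait — no: $\varphi_1(2ic^2\tau)$ does \emph{not} go to zero since it is scalar and $2c^2\tau$ need not be small) into the single $\frac38$-nonlinearity of \eqref{nls:scheme} after the $e^{ic^2\tau}$ factor is split off. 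Handling this scalar phase correctly — it is $\mathcal{O}(1)$, not $\mathcal{O}(c^{-1})$, and must be matched exactly rather than estimated — is the one place where an inequality will not do and an identity is required.
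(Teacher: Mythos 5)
There is a genuine gap, and it sits exactly where your bookkeeping goes wrong. Your plan is a single Lady Windermere comparison of \eqref{scheme} against \eqref{nls:scheme}, which requires a per-step discrepancy of size $\tau\, c^{-1}$. But the purely oscillatory contributions in \eqref{scheme} --- $\tau\frac18 c\nab^{-1}e^{i\tau c\nab}\big[\varphi_1(2ic^2\tau)(u^n)^3\big]$, $3\tau\frac18 c\nab^{-1}e^{i\tau c\nab}\big[u^n\varphi_1(-2i\tau c\nab)(\overline{u^n})^2\big]$ and the $\varphi_1(-2i\tau(c\nab+c^2))$ term --- do not admit such a bound. The correct uniform estimate is $\big|\tau\,\varphi_1(-2i\tau (c\nab)_k)\big|\le \min\big(\tau, c^{-2}\big)$ (and $\Vert c\nab^{-1}\Vert_r=1$ by \eqref{stabi}, \emph{not} $c^{-1}$: the multiplier is $c/\sqrt{k^2+c^2}\to 1$, and if it were $\mathcal{O}(c^{-1})$ the whole nonlinearity would vanish in the NLS limit). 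Your claimed prefactor bound $\Vert\tau c\nab^{-1}\Vert\le\tau c^{-1}$ is therefore false, and it is what leads you to the claimed local error $\tau c^{-1}$; note also that even your own intermediate statement (``these terms are themselves $\mathcal{O}(c^{-1})$'' per step) is not of the form $\tau\cdot c^{-1}$ and would accumulate to $1/(c\tau)$, not $c^{-1}$. With the honest per-step bound $\min(\tau,c^{-2})$, telescoping over $n\sim t_n/\tau$ steps yields $\min\big(t_n,\, t_n/(c^2\tau)\big)$, which is $\mathcal{O}(c^{-1})$ only when $\tau\gtrsim c^{-1}$; for $\tau\ll c^{-1}$ (e.g.\ $\tau\sim c^{-2}$) your argument produces an $\mathcal{O}(1)$ bound and the proof does not close.

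This is precisely why the paper's proof splits into two regimes. For $\tau\le 1/c$ it never compares the two schemes directly: it goes through the exact flows, writing $u^{n+1}-e^{ic^2t_{n+1}}u^{n+1}_{\ast,\infty}$ as (continuous asymptotic error, cf.\ \eqref{zexp}) plus (first-order time error of \eqref{scheme} from Theorem \ref{thm:glob}) plus (first-order time error of \eqref{nls:scheme}), giving $k(\tau+c^{-1})\le k\,c^{-1}$. Only for $\tau>1/c$ does it run the scheme-to-scheme recursion \eqref{uuast}, where the oscillatory terms are bounded by $c^{-2}$ per step and the accumulated $1/(c^2\tau)$ is absorbed into $c^{-1}$ precisely because $\tau>1/c$; the remaining differences (free propagator and $\varphi_1(-2i\tau\Lc)$ versus $\varphi_1(i\tau\Delta)$) cost $\tau c^{-1}\Vert\cdot\Vert_{r+3}$ with the interpolation exponent $\alpha=\tfrac12$, which is also why your stated $c^{-2}$ gains at $r+3$ and $r+4$ derivatives overshoot both what is true at this regularity and what the hypothesis $u\in H^{r+3+\varepsilon}$ can support. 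Incorporating this case distinction (or some genuinely new averaging/cancellation argument across steps, which neither you nor the paper provides) is indispensable; the rest of your outline --- matching the phase $e^{ic^2 t_n}$, stability via Lemma \ref{thm:stab}, a priori bounds on $u^n$ in $H^{r+3}$ and on $u^n_{\ast,\infty}$ --- is consistent with the paper.
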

\begin{proof}
First we note that for  $\tau \leq  \frac{1}{c }$ we have, by the asymptotic approximation on the continuous level which holds at order $\frac{1}{c}$  (cf. \eqref{zexp}) as well as the first order time convergence of the scheme $u^n$ towards Klein--Gordon and $u_{\ast, \infty}^{n}$ towards NLS,  that
\begin{align*}
\left \Vert   u^{n+1} - e^{i c^2 t_{n+1}} u_{\ast, \infty}^{n+1} \right \Vert_{H^r} 
& \leq \left  \Vert u(t_{n+1}) - e^{i c^2 t_{n+1}} u_{\ast, \infty}(t_{n+1}) \right \Vert_{H^r} 
\\& + \left \Vert   u^{n+1} -u(t_{n+1}) \right \Vert_{H^r} +  \left \Vert    u_{\ast, \infty}^{n+1}  -u{\ast, \infty}(t_{n+1}) \right \Vert_{H^r} 
\\& \leq k\left ( \tau +  \frac{1}{c }  \right) \leq k  \frac{1}{c}.
\end{align*}

Next let us assume that $\tau > \frac{1}{c  }$. Taking the difference of \eqref{scheme} and  \eqref{nls:scheme} (the latter multiplied with the oscillatory phase $e^{i c^2 t_{n+1}}$  (cf. \eqref{zexp})) we see thanks to \eqref{stabi} that
\begin{equation}\label{uuast}
\begin{aligned}
\Vert  & u^{n+1} - e^{i c^2 t_{n+1}} u_{\ast, \infty}^{n+1} \Vert_{H^r}  \leq
\left\Vert e^{i \tau  c\nab}  u^n - e^{i c^2\tau} e^{- i \tau \frac12 \Delta } e^{i c^2 t_{n}} u_{\ast, \infty}^{n} 
\right\Vert_r\\
&+\tau  \frac38   \left \Vert e^{i \tau  c\nab}   c\nab^{-1}  \Big[(u^n)^2 \varphi_1(-2i \tau  \Lc) \overline{u^n}
\Big]-     e^{i c^2 \tau} e^{- i \tau \frac12 \Delta }  \Big[ (e^{i c^2 t_n }  u_{\ast, \infty}^n)^2 \varphi_1(i \tau  \Delta ) e^{-i c^2 t_{n}}  \overline{u_{\ast, \infty}^n}\Big]
\right\Vert_r \\
&
+  \tau  \left \Vert
\varphi_1(2i c^2\tau) (u^n)^3\right\Vert_r+  3  \tau    \left \Vert  u^n \varphi_1({- 2 i \tau c\nab })(\overline{u^n})^2\right\Vert_r + \tau   \left \Vert \varphi_1({- 2 i \tau ( c\nab+c^2) })(\overline{u^n})^3\right\Vert_r.
\end{aligned}
\end{equation}
{
First we use that 
$$
c\nab^{-1}   = \frac{1}{\sqrt{1-\frac{\Delta}{c^2}}} = 1 + \mathcal{O}\left( \frac{\Delta}{c^2}\right)
$$
such that
$$
\Vert \left( c \nab^{-1} - 1\right) v \Vert_r \leq   \frac{k}{c^2} \Vert v\Vert_{r+2}
$$
and hence
\begin{align*}
& \tau  \frac38   \left \Vert e^{i \tau  c\nab}   c\nab^{-1}  \Big[(u^n)^2 \varphi_1(-2i \tau  \Lc) \overline{u^n}
\Big]-     e^{i c^2 \tau} e^{- i \tau \frac12 \Delta }  \Big[ (e^{i c^2 t_n }  u_{\ast, \infty}^n)^2 \varphi_1(i \tau  \Delta ) e^{-i c^2 t_{n}}  \overline{u_{\ast, \infty}^n}\Big]
\right\Vert_r \\
& \leq   \tau  \frac38   \left \Vert e^{i \tau  c\nab}   \left(c\nab^{-1}  -1\right) \Big[(u^n)^2 \varphi_1(-2i \tau  \Lc) \overline{u^n}
\Big]
\right\Vert_r\\
& +  \tau  \frac38   \left \Vert e^{i \tau  c\nab}    \Big[(u^n)^2 \varphi_1(-2i \tau  \Lc) \overline{u^n}
\Big]-     e^{i c^2 \tau} e^{- i \tau \frac12 \Delta }  \Big[ (e^{i c^2 t_n }  u_{\ast, \infty}^n)^2 \varphi_1(i \tau  \Delta ) e^{-i c^2 t_{n}}  \overline{u_{\ast, \infty}^n}\Big]
\right\Vert_r\\
& \leq   k \frac{\tau}{c^2}  \Vert {u^n}\Vert_r +  \tau  \frac38   \left \Vert e^{i \tau  c\nab}    \Big[(u^n)^2 \varphi_1(-2i \tau  \Lc) \overline{u^n}
\Big]-     e^{i c^2 \tau} e^{- i \tau \frac12 \Delta }  \Big[ (e^{i c^2 t_n }  u_{\ast, \infty}^n)^2 \varphi_1(i \tau  \Delta ) e^{-i c^2 t_{n}}  \overline{u_{\ast, \infty}^n}\Big]
\right\Vert_r.
\end{align*}

}
Newt we use that for any $0\neq \xi \in \mathbb{R}$ it holds that
$$
\tau \Vert \varphi_1(i c^2\tau  \xi) \Vert_r = \left\Vert \frac{e^{i c^2\tau \xi}-1}{i c^2 \xi} \right\Vert_r\leq \frac{2}{c^2\vert \xi\vert}.
$$
Furthermore, by the expansion
$$
c\nab = c^2 - \frac12 \Delta + \mathcal{O}\left( \frac{\Delta^{1+\alpha}}{c^{2\alpha}} \right) \quad \text{for } 0\leq \alpha \leq 1
$$
we see (by choosing $\alpha = \frac12$) that
$$
\left \Vert \left( e^{i \tau  c\nab}   - e^{i c^2\tau} e^{- i \tau \frac12 \Delta }\right)  v\right\Vert_r \leq  k \frac{\tau}{c}\Vert v \Vert_{r+3}.
$$
Finally let us note that
\begin{align*}
\tau  \varphi_1(-2i \tau  \Lc) - \tau \varphi_1(i \tau  \Delta ) &  = \int_0^\tau \left( e^{-2i s  \Lc} - e^{i s \Delta}\right)ds= \int_0^\tau   e^{i s \Delta} \left( { e^{-2i s(\Lc-\frac12\Delta)}} -1 \right)ds
\end{align*}
which implies that
\begin{align*}
\tau  \left\Vert \left(\varphi_1(-2i \tau  \Lc) -  \varphi_1(i \tau  \Delta ) \right) v\right\Vert_r \leq  \frac{ \tau  }{c }\Vert v \Vert_{r+3}.
\end{align*}
Applying the above estimates in \eqref{uuast} we obtain that
\begin{equation}\label{uuast2} 
\begin{aligned}
\left \Vert   u^{n+1} - e^{i c^2 t_{n+1}} u_{\ast, \infty}^{n+1} \right\Vert_{H^r}&  \leq
(1+\tau k_0) \left\Vert  u^n -  e^{i c^2 t_{n}} u_{\ast, \infty}^{n} 
\right\Vert_r+ \tau  \frac{1}{c}k_1\big(\Vert u_n \Vert_{r+3}\big)  + c^{-2} k_2\left(\Vert u^n\Vert_r\right),
\end{aligned}
\end{equation}
where the constant $k_0$ depends on $\Vert u_n\Vert_r$ and $\Vert  u_{\ast, \infty}^{n} 
 \Vert_r$,  $k_1$ depends on $\Vert u_n \Vert_{r+3}$ and $k_2$ on $\Vert u^n\Vert_r$, but both constants can be chosen independently of $c$. Iterating \eqref{uuast2}  we obtain   that  
 \begin{align*}
\left \Vert   u^{n+1} - e^{i c^2 t_{n+1}} u_{\ast, \infty}^{n+1} \right\Vert_{H^r}&  \leq
 \frac{1}{c }K_1\big(\sup_{0\leq k \leq n} \Vert u^k \Vert_{r+3}\big)  + \frac{1}{c^2\tau}  K_2\left(\sup_{0\leq k \leq n}\Vert u^k\Vert_r\right),
\end{align*}
where $K_1$, $K_2$ can be chosen independently of $c$. As we assume that $\tau >  \frac{1}{c}$ we can conclude that
$$
\left \Vert   u^{n+1} - e^{i c^2 t_{n+1}} u_{\ast, \infty}^{n+1} \right\Vert_{H^r}  \leq  \frac{1}{c} K_1\big(\sup_{0\leq k \leq n} \Vert u^k \Vert_{r+3}\big) .
$$
Note that the regularity assumption  $u \in \mathcal{C}([0,T];H^{r+3+\varepsilon})$ implies a priori the boundedness of the numerical solution in $H^{r+3}$, i.e.,  that $\sup_{0\leq k \leq n} \Vert u^k \Vert_{r+3} < +\infty$ by the global convergence result in Theorem \ref{thm:glob}. This yields the  assertion.
\end{proof}
{ 
\begin{rem}
On the theoretical level one chan show asymptotic convergence (without or lower rate) under much lower regularity assumptions, see, e.g. \cite[Theorem 1.1]{MaNak02}.
\end{rem}
}
\section{A second order low regularity uniformly accurate integrator}\label{sec:second}
Iterating Duhamel's formula for \eqref{eq:NLSc} we obtain
\begin{equation}\label{Duh2}
\begin{aligned}
u(t) 
& = e^{i t c\nab}u(0)- i \frac18 c\nab^{-1}  e^{i t c\nab}\mathcal{I}(t,c\nab, u(0))\\
&- 3i \frac18 c\nab^{-1}  e^{i t c\nab}\int_0^t   e^{-i s c\nab}
\left(\Big(e^{i s c\nab}u(0)\Big)^2  ( -i \frac18 c\nab^{-1})e^{i s c\nab}\mathcal{I}(s,c\nab, u(0)) \right)ds\\
&- 3i \frac18 c\nab^{-1}  e^{i t c\nab}\int_0^t   e^{-i s c\nab}
\left(\Big(e^{i s c\nab}u(0)\Big)^2  ( i \frac18 c\nab^{-1})e^{-i s c\nab} \overline{\mathcal{I}(s,c\nab, u(0))} \right)ds\\
&- 3i \frac18 c\nab^{-1}  e^{i t c\nab}\int_0^t   e^{-i s c\nab}
\left(\Big(e^{-i s c\nab}\overline{u(0)}\Big)^2  ( -i \frac18 c\nab^{-1})e^{i s c\nab}\mathcal{I}(s,c\nab, u(0)) \right)ds\\
&- 3i \frac18 c\nab^{-1}  e^{i t c\nab}\int_0^t   e^{-i s c\nab}
\left(\Big(e^{-i s c\nab}\overline{u(0)}\Big)^2  ( i \frac18 c\nab^{-1}) e^{-i s c\nab}\overline{\mathcal{I}(s,c\nab, u(0))} \right)ds\\
&-6 i \frac18 c\nab^{-1}  e^{i t c\nab}\int_0^t   e^{-i s c\nab}
\left(\Big\vert e^{i s c\nab}u(0)\Big\vert^2  (- i \frac18 c\nab^{-1})  e^{i s c\nab} \mathcal{I}(s,c\nab, u(0)) \right)ds\\
&-6 i \frac18 c\nab^{-1}  e^{i t c\nab}\int_0^t   e^{-i s c\nab}
\left(\Big\vert e^{i s c\nab}u(0)\Big\vert^2  ( i \frac18 c\nab^{-1})  e^{-i s c\nab} \overline{\mathcal{I}(s,c\nab, u(0))} \right)ds\\
&+  \mathcal{B}_1(t,u)
\end{aligned}
\end{equation}
where we have used the notation \eqref{osc}, i.e., that 
$$
\mathcal{I}(s,c\nab, u(0))=\int_0^s   e^{-i s_1 c\nab}\left( e^{i s_1 c\nab} u(0) + e^{-i s_1 c\nab}\overline{u}(0) \right)^3 ds_1.
$$
The remainder $ \mathcal{B}_1(t,u)$ thereby satisfies
\begin{equation}\label{B1}
\begin{aligned}
\Vert  \mathcal{B}_1(t,u)\Vert_r &  \leq t^3 K(\sup_{0\leq t\leq s} \Vert u(t)\Vert_r).
\end{aligned}
\end{equation}
Thanks to the first order scheme we know that
\begin{align*}
\mathcal{I}(s,c\nab, u(0)) & =  { s}\varphi_1(2i c^2s) (u(0))^3 +  3 { s}(u(0))^2 \varphi_1(-2is  \Lc) \overline{u(0)}
\\ & +  3 { s}     u(0) \varphi_1({- 2 i s c\nab })(\overline{u(0)})^2+    { s}\varphi_1({- 2 i s ( c\nab+c^2) })(\overline{u(0)})^3\\
&  + \mathcal{B}_2(s,u)
\end{align*}
 where the remainder satisfies
\begin{equation*}\label{B2}
\begin{aligned}
\Vert  \mathcal{B}_2(s,u)\Vert_r &  \leq s^2 K(\sup_{0\leq t\leq s} \Vert u(t)\Vert_{r+1}).
\end{aligned}
\end{equation*}
Together with the expansion $c\nab = c^2 + \mathcal{O}(\Delta)$ (cf. \eqref{bound:Lc}) which formally implies that
$$
 s \varphi_1({- \ell i s c\nab }) = \int_0^s e^{- \ell i s_1 c\nab}ds_1 = 
 \int_0^s\left( e^{- \ell i s_1 c^2 }+ \mathcal{O}(s_1 \Delta)\right) ds_1=
  s \varphi_1({- \ell i s c^2 })  + \mathcal{O}(s^2 \Delta)
$$
we thus obtain
\begin{equation}\label{Duh3a}
\begin{aligned}
u(t)  
& = e^{i t c\nab}u(0)- i \frac18 c\nab^{-1}  e^{i t c\nab}\mathcal{I}(t,c\nab, u(0))\\
&- 3i \frac18 c\nab^{-1}  e^{i t c^2}\int_0^t   e^{-i s c^2}
\left(\Big(e^{i s c^2}u(0)\Big)^2 \widetilde{\mathcal{I}}(s,c^2, u(0)) \right)ds\\
&- 3i \frac18 c\nab^{-1}  e^{i t c^2}\int_0^t   e^{-i s c^2}
\left(\Big(e^{i s c^2}u(0)\Big)^2 \overline{\widetilde{\mathcal{I}}(s,c^2, u(0))} \right)ds\\
&- 3i \frac18 c\nab^{-1}  e^{i t c^2}\int_0^t   e^{-i s c^2}
\left(\Big(e^{-i s c^2}\overline{u(0)}\Big)^2 \widetilde{\mathcal{I}}(s,c^2, u(0)) \right)ds\\
&- 3i \frac18 c\nab^{-1}  e^{i t c^2}\int_0^t   e^{-i s c^2}
\left(\Big(e^{-i s c^2}\overline{u(0)}\Big)^2 \overline{\widetilde{\mathcal{I}}(s,c^2, u(0))} \right)ds\\
&-6 i \frac18 c\nab^{-1}  e^{i t c^2}\int_0^t   e^{-i s c^2}
\left(\Big\vert e^{i s c^2}u(0)\Big\vert^2 \widetilde{\mathcal{I}}(s,c^2, u(0)) \right)ds \\
&-6 i \frac18 c\nab^{-1}  e^{i t c^2}\int_0^t   e^{-i s c^2}
\left(\Big\vert e^{i s c^2}u(0)\Big\vert^2 \overline{\widetilde{\mathcal{I}}(s,c^2, u(0))} \right)ds \\
&+   \mathcal{B}_3(t,u)
\end{aligned}
\end{equation}
 with
\begin{align*}
\widetilde{\mathcal{I}}(s,c\nab, u(0)) & = - i  s \frac18  c\nab^{-1}  e^{is  c^2}
\Big[( \varphi_1(2i c^2s) (u(0))^3 +  3 \vert u(0)\vert^2 {u(0)}
\\ & +  3      u(0) \varphi_1({- 2 i s c^2 })(\overline{u(0)})^2+    \varphi_1({- 4 i s c^2 })(\overline{u(0)})^3 \Big]
\end{align*}
 and where the remainder $\mathcal{B}_3(t,u)$ satisfies
\begin{equation}\label{B3}
\begin{aligned}
\Vert  \mathcal{B}_3(t,u)\Vert_r &  \leq t^3 K(\sup_{0\leq t\leq s} \Vert u(t)\Vert_{r+2}).
\end{aligned}
\end{equation}
Next we state an essential Lemma on the integration of the oscillations.
\begin{lemma}
For $m,l\in\mathbb{Z}$, $l\neq 0$, $c\geq0$ and $t\geq0$ we have
$$
\int_0^t s e^{imc^2s}\varphi_1(i\ell c^2s)ds = \frac{t}{i\ell c^2}\big( \varphi_1(i(\ell+m)c^2t) - \varphi_1(i{m }c^2t) \big).
$$
\end{lemma}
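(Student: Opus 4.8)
The plan is to reduce the statement to elementary exponential integrals by simply unfolding the definition of the $\varphi_1$ function, $\varphi_1(\xi) = \frac{e^\xi-1}{\xi}$. The first step I would carry out is to cancel the prefactor $s$ against the $s$ appearing in the denominator of $\varphi_1(i\ell c^2 s)$: since $\ell\neq 0$,
\[
s\,\varphi_1(i\ell c^2 s) = s\cdot\frac{e^{i\ell c^2 s}-1}{i\ell c^2 s} = \frac{e^{i\ell c^2 s}-1}{i\ell c^2}.
\]
Multiplying by $e^{imc^2 s}$ then turns the whole integrand into a pure sum of two exponentials with no remaining singular factor,
\[
s\,e^{imc^2 s}\,\varphi_1(i\ell c^2 s) = \frac{1}{i\ell c^2}\Big(e^{i(\ell+m)c^2 s} - e^{imc^2 s}\Big).
\]

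The second step is to integrate each exponential over $[0,t]$ using the elementary identity $\int_0^t e^{i\mu c^2 s}\,ds = t\,\varphi_1(i\mu c^2 t)$, which is again nothing but the definition of $\varphi_1$; note that this extends continuously to $\mu=0$ with value $t$ (consistent with $\varphi_1(0)=1$), so no case distinction is needed. Applying this with $\mu=\ell+m$ and with $\mu=m$, and factoring out the common constant $\frac{1}{i\ell c^2}$ together with the $t$ produced by each integral, yields exactly
\[
\frac{t}{i\ell c^2}\big(\varphi_1(i(\ell+m)c^2 t) - \varphi_1(imc^2 t)\big),
\]
which is the claim.

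There is essentially no genuine obstacle here: the computation is a one-line manipulation once one uses the explicit form of $\varphi_1$. The only point worth a brief remark is that the right-hand side remains well defined and the argument remains valid verbatim even in the degenerate cases $\ell+m=0$ or $m=0$, because $\varphi_1$ is an entire function; the hypothesis $\ell\neq0$ is used only in the very first step to make sense of dividing by $i\ell c^2$.
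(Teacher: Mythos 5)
Your proposal is correct and follows essentially the same route as the paper: unfold the definition of $\varphi_1$, cancel the factor $s$ against the denominator (using $\ell\neq 0$), and integrate the resulting two exponentials, recognizing each integral as $t\,\varphi_1(\cdot)$. The only difference is that you spell out the intermediate steps and the degenerate cases $m=0$ or $\ell+m=0$ slightly more explicitly than the paper's one-line computation, which is harmless.
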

\begin{proof}
The assertion follows thanks to 
\begin{align*}
    \int_0^t s e^{imc^2s}\varphi_1(ilc^2s)ds = \int_0^t s \frac{e^{i(m+\ell)c^2s}-e^{i { m }c^2s}}{i\ell c^2s} = \frac{t}{i\ell c^2}\big( \varphi_1(i(\ell+m)c^2t) - \varphi_1(i{m }c^2t) \big).
\end{align*}
\end{proof}
In particular, we have
\begin{equation}\label{def:phi2}
\int_0^t s \varphi_1(i \ell c^2s) ds = \frac{1}{i \ell c^2} \int_0^t \left( e^{i \ell c^2 s} - 1 \right) ds =
\frac{t}{i \ell c^2} \left(\varphi_1({i \ell c^2 t}) - 1\right) =:t^2 \varphi_2({i \ell c^2 t}) .
\end{equation} 
Furthermore, for $m\in \mathbb{Z}$, we define
\begin{align}
\label{def_Psi_2}
   &   t^2 \Psi_2(imc^2t) := \int_0^t s e^{imc^2s} ds.
\end{align}

With this definition at hand we obtain that
\begin{equation}\label{M1}
\begin{aligned}
- i \frac18 c\nab^{-1}  e^{i t c^2} &\int_0^t   e^{-i s c^2}
\left(\Big(e^{i s c^2}u(0)\Big)^2 \widetilde{\mathcal{I}}(s,c^2, u(0)) \right)ds
\\& = - i \frac18 c\nab^{-1}  e^{i t c^2} \int_0^t e^{2i s c^2} u(0)^2
\Big(- i  s \frac18  c\nab^{-1} 
\Big[ \varphi_1(2i c^2s) (u(0))^3 +  3 \vert u(0)\vert^2 u(0)
\\ & +  3      {u(0)}\varphi_1({-2 i s c^2 })( \overline{u(0)})^2+    \varphi_1({-4 i s c^2 })(\overline{u(0)})^3 \Big]\Big)
ds
\\& = -  \frac18 \cdot\frac18 c\nab^{-1}  e^{i t c^2} \Big( u(0)^2
  c\nab^{-1} 
\Big[ \frac{t}{2ic^2}(\varphi_1(4i c^2t)- \varphi_1(2i c^2t)) (u(0))^3 
\\&+  3t^2 {\Psi_2}(2ic^2t)\vert u(0)\vert^2 u(0)+  3  t^2    {u(0)}\varphi_2({2 i t c^2 })( \overline{u(0)})^2 
\\&+    \frac{t}{-4ic^2}( \varphi_1(-2i c^2t)- \varphi_1(2i c^2t) )(\overline{u(0)})^3 \Big]
\Big) =: \mathcal{M}_1(t,c^2,u(0))
\end{aligned}
\end{equation}
\begin{equation}\label{M2}
\begin{aligned}
    - i \frac18 c\nab^{-1}  e^{i t c^2} &\int_0^t   e^{-i s c^2}
\left(\Big(e^{i s c^2}u(0)\Big)^2 \overline{\widetilde{\mathcal{I}}(s,c^2, u(0))} \right)ds
\\&=- i \frac18 c\nab^{-1}  e^{i t c^2} \int_0^t  u(0)^2
\Big( i  s \frac18  c\nab^{-1} 
\Big[ \varphi_1(-2i c^2s) (\overline{u(0)})^3 +  3 \vert u(0)\vert^2 \overline{u(0)}
\\ & +  3      \overline{u(0)}\varphi_1({2 i s c^2 })( {u(0)})^2+    \varphi_1({4 i s c^2 })({u(0)})^3 \Big]\Big)
ds
\\& = + t^2 \frac{1}{64} c\nab^{-1}e^{itc^2}\Big( u(0)^2 c\nab^{-1} \Big[ \varphi_2(- 2ic^2t)\overline{u(0)}^3 + \frac32 \vert u(0)\vert^2\overline{u(0)} 
\\&+3\overline{u(0)}\varphi_2(2itc^2){u(0)}^2 + \varphi_2(4itc^2){u(0)}^3  \Big]\Big)  =: \mathcal{M}_2(t,c^2,u(0))
\end{aligned}
\end{equation}
\begin{equation}\label{M3}
\begin{aligned}
    - i \frac18 c\nab^{-1}  e^{i t c^2}&\int_0^t   e^{-i s c^2}
\left(\Big(e^{-i s c^2}\overline{u(0)}\Big)^2 \widetilde{\mathcal{I}}(s,c^2, u(0)) \right)ds
\\&  = - i \frac18 c\nab^{-1}  e^{i t c^2}\int_0^t   e^{-2i s c^2}  \overline{u(0)}^2 \Big(- i  s \frac18  c\nab^{-1} 
\Big[ \varphi_1(2i c^2s) (u(0))^3 +  3 \vert u(0)\vert^2 u(0)
\\ & +  3      {u(0)}\varphi_1({-2 i s c^2 }) \overline{u(0)}^2+    \varphi_1({-4 i s c^2 })(\overline{u(0)})^3 \Big]\Big)
\\&= - \frac{1}{64} c\nab^{-1}  e^{i t c^2} \Big(  \overline{u(0)}^2 c\nab^{-1} \Big[ t^2{\varphi_2}(-2ic^2t)u(0)^3 + 3 t^2 {\Psi_2 }(-2ic^2t)\vert u(0)\vert^2 u(0) 
\\ &+ \frac{3t}{-2ic^2}u(0)(\varphi_1(-4ic^2t) - \varphi_1(-2ic^2t))\overline{u(0)}^2 +\frac{t}{-4ic^2}(\varphi_1(-6itc^2)-\varphi_1(-2itc^2))\overline{u(0)}^3\Big] \Big)
 \\&=: \mathcal{M}_3(t,c^2,u(0))
\end{aligned}
\end{equation}
\begin{equation}\label{M4}
\begin{aligned}
    - i \frac18 c\nab^{-1}  e^{i t c^2}&\int_0^t   e^{-i s c^2}
\left(\Big(e^{-i s c^2}\overline{u(0)}\Big)^2 \overline{\widetilde{\mathcal{I}}(s,c^2, u(0))} \right)ds
\\&  = - i \frac18 c\nab^{-1}  e^{i t c^2}\int_0^t   e^{-4i s c^2}  \overline{u(0)}^2 \Big( i  s \frac18  c\nab^{-1} 
\Big[ \varphi_1(-2i c^2s) (\overline{u(0)})^3 +  3 \vert u(0)\vert^2 \overline{u(0)}
\\ & +  3      \overline{u(0)}\varphi_1({2 i s c^2 })( {u(0)})^2+    \varphi_1({4 i s c^2 })({u(0)})^3 \Big]\Big)
\\&=  \frac{1}{64} c\nab^{-1}  e^{i t c^2} \Big(  \overline{u(0)}^2 c\nab^{-1} \Big[ \frac{t}{-2ic^2}(\varphi_1(-6ic^2t)-\varphi_1(-4ic^2t))\overline{u(0)}^3 
\\& + 3t^2{\Psi_2}(-4ic^2t)\vert u(0)\vert^2\overline{u(0)} +\frac{3t}{2ic^2}\overline{u(0)}(\varphi_1(-2ic^2t)-\varphi_1(-4ic^2t))u(0)^2
\\& + t^2{\varphi_2} (-4ic^2t)u(0)^3\Big] \Big)  =: \mathcal{M}_4(t,c^2,u(0))
\end{aligned}
\end{equation}
\begin{equation}\label{M5}
\begin{aligned}
    - i \frac18 c\nab^{-1}  e^{i t c^2}&\int_0^t   e^{-i s c^2}
\left(\Big\vert e^{i s c^2}u(0)\Big\vert^2 \widetilde{\mathcal{I}}(s,c^2, u(0)) \right)ds
\\& = - i \frac18 c\nab^{-1}  e^{i t c^2} \int_0^t  \vert u(0)\vert^2
\Big(- i  s \frac18  c\nab^{-1} 
\Big[ \varphi_1(2i c^2s) (u(0))^3 +  3 \vert u(0)\vert^2 u(0)
\\ & +  3      {u(0)}\varphi_1({-2 i s c^2 })( \overline{u(0)})^2+    \varphi_1({-4 i s c^2 })(\overline{u(0)})^3 \Big]\Big)ds
\\& = -\frac{t^2}{64} c\nab^{-1}  e^{i t c^2} \Big( \vert u(0)\vert^2 c\nab^{-1} \Big[ \varphi_2(2ic^2t)u(0)^3 + \frac32\vert u(0)\vert^2u(0) 
\\&+ 3u(0)\varphi_2(-2ic^2t)\overline{u(0)}^2 + \varphi_2(-4ic^2t)\overline{u(0)}^3  \Big] \Big)  =: \mathcal{M}_5(t,c^2,u(0))
\end{aligned}
\end{equation}
\begin{equation}\label{M6}
\begin{aligned}
    - i \frac18 c\nab^{-1}  e^{i t c^2}&\int_0^t   e^{-i s c^2}
\left(\Big\vert e^{i s c^2}u(0)\Big\vert^2 \overline{\widetilde{\mathcal{I}}(s,c^2, u(0))} \right)ds
\\& = - i \frac18 c\nab^{-1}  e^{i t c^2} \int_0^t e^{-2isc^2} \vert u(0)\vert^2
\Big( i  s \frac18  c\nab^{-1} 
\Big[ \varphi_1(-2i c^2s) (\overline{u(0)})^3 +  3 \vert u(0)\vert^2 \overline{u(0)}
\\ & +  3      \overline{u(0)}\varphi_1({2 i s c^2 })( {u(0)})^2+    \varphi_1({4 i s c^2 })({u(0)})^3 \Big]\Big)
ds
\\& = \frac{1}{64} c\nab^{-1}  e^{i t c^2} \Big( \vert u(0)\vert^2 c\nab^{-1} \Big[ \frac{t}{-2ic^2}(\varphi_1(-4ic^2t)-\varphi_1(-2ic^2t))\overline{u(0)}^3 
\\&+ 3u(0)t^2{\Psi_2}(-2ic^2t) \overline{u(0)}^2 +3\overline{u(0)} t^2{\varphi_2}(-2ic^2t)u(0)^2 
\\&+ \frac{t}{4ic^2}(\varphi_1(2ic^2t) - \varphi_1(-2ic^2t)){u(0)}^3 \Big] \Big)  =: \mathcal{M}_6(t,c^2,u(0)).
\end{aligned}
\end{equation}
The above calculations allow us to express \eqref{Duh3a} as follows
\begin{equation}\label{Duh3}
\begin{aligned}
u(t)  
& = e^{i t c\nab}u(0)- i \frac18 c\nab^{-1}  e^{i t c\nab}\mathcal{I}(t,c\nab, u(0))\\
&+ 3 \sum_{j=1}^4 \mathcal{M}_j(t,c^2,u(0)) + 6 \mathcal{M}_5(t,c^2,u(0)) + 6 \mathcal{M}_6(t,c^2,u(0))
\\&+   \mathcal{B}_3(t,u),
\end{aligned}
\end{equation}
where the remainder satisfies \eqref{B3}. Thus, it remains to derive a second order uniformly accurate, low regularity approximation to the central oscillations $\mathcal{I}(t,c\nab, u(0))$ defined in \eqref{osc}. We will again treat each integral separtely 
\begin{align}\label{I12}
\mathcal{I}(t, c\nab, u(0)) &=\int_0^t   e^{-i s (c^2+\Lc)}\left(e^{i s (c^2+\Lc)}u(0)\right)^3 ds\\\label{I22}
& + 3  \int_0^t   e^{-i s  (c^2+\Lc)}\left[\left(e^{i s  (c^2+\Lc)}u(0)\right)^2  e^{-i s  (c^2+\Lc)} \overline{u}(0)  \right]ds\\\label{I32}
& + 3  \int_0^t   e^{-i s  (c^2+\Lc)}\left[\left(e^{-i s  (c^2+\Lc)} \overline{u}(0)\right)^2  e^{i s  (c^2+\Lc)}  {u}(0)  \right]ds\\\label{I42}
& +\int_0^t   e^{-i s  (c^2+\Lc)}\left(e^{-i s (c^2+\Lc)}\overline u(0)\right)^3 ds.
\end{align}
\begin{lemma}[Second order approximation of the integral \eqref{I12}]\label{lem:I12}
It holds that
\begin{align*}
\int_0^t  &  e^{-i s (c^2+\Lc)}\left(e^{i s (c^2+\Lc)}v\right)^3 ds \\ & = t \varphi_1(2i c^2t) v^3 + t\varphi_2 (2i c^2t)    \left(  e^{-i t \Lc}
\left(e^{i t  \Lc}v\right)^3 - v^3\right)
+ \mathcal{O}\left( t^3 \mathcal{C}^2[f_{\text{cub}}(\cdot,\cdot,\cdot), \Lc](v,v,v)  \right).
\end{align*}
\end{lemma}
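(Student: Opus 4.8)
The plan is to run the argument of Lemma~\ref{lem:I1} one Taylor order further. As there, I would first pull out the scalar fast phase and reduce the left-hand side of \eqref{I12} to
$$
\mathcal{I}_1(t,c\nab,v):=\int_0^t e^{-is(c^2+\Lc)}\big(e^{is(c^2+\Lc)}v\big)^3\dd s=\int_0^t e^{2ic^2s}\,\mathcal{N}(s,\Lc,v)\dd s ,\qquad \mathcal{N}(s,\Lc,v)=e^{-is\Lc}\big(e^{is\Lc}v\big)^3 .
$$
The idea is that $e^{2ic^2s}$ is integrated \emph{exactly}, whereas the filtered function $\mathcal{N}$, which depends only on the ``slow'' operator $\Lc$, is Taylor expanded -- and, crucially, its Taylor coefficients are, by the commutator identity already used in the proof of Lemma~\ref{lem:I1}, of first, resp.\ second, order in $\Lc$ \emph{only}, hence bounded in $H^{r+1}$, resp.\ $H^{r+2}$, uniformly in $c$ thanks to Lemma~\ref{bound:comm}.

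Concretely I would insert the second order Taylor expansion with integral remainder,
$$
\mathcal{N}(s,\Lc,v)=v^3+s\,\partial_{s_1}\mathcal{N}(0,\Lc,v)+\int_0^s (s-s_1)\,\partial_{s_1}^2\mathcal{N}(s_1,\Lc,v)\dd s_1 ,
$$
into $\mathcal{I}_1(t,c\nab,v)$. As in the proof of Lemma~\ref{lem:I1} one has $\partial_{s_1}\mathcal{N}(s_1,\Lc,v)=e^{-is_1\Lc}\,\mathcal{C}[f_{\text{cub}}(\cdot,\cdot,\cdot),i\Lc]\big(e^{is_1\Lc}v,e^{is_1\Lc}v,e^{is_1\Lc}v\big)$, so $\partial_{s_1}\mathcal{N}(0,\Lc,v)=\mathcal{C}[f_{\text{cub}}(\cdot,\cdot,\cdot),i\Lc](v,v,v)=i\,\mathcal{C}[f_{\text{cub}}(\cdot,\cdot,\cdot),\Lc](v,v,v)$; differentiating once more (the expression for $\partial_{s_1}\mathcal{N}$ has exactly the shape of $\mathcal{N}$ with $f_{\text{cub}}$ replaced by a commutator) yields, up to a sign, $\partial_{s_1}^2\mathcal{N}(s_1,\Lc,v)=e^{-is_1\Lc}\,\mathcal{C}^2[f_{\text{cub}}(\cdot,\cdot,\cdot),\Lc]\big(e^{is_1\Lc}v,e^{is_1\Lc}v,e^{is_1\Lc}v\big)$. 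Using $\|e^{is\Lc}\|_r=1$ and Lemma~\ref{bound:comm}, the remainder contribution is bounded by $\int_0^t\!\int_0^s(s-s_1)k_2\|v\|_{r+2}^3\dd s_1\dd s=\tfrac{t^3}{6}k_2\|v\|_{r+2}^3=\mathcal{O}\big(t^3\,\mathcal{C}^2[f_{\text{cub}}(\cdot,\cdot,\cdot),\Lc](v,v,v)\big)$. The constant and linear terms are integrated against $e^{2ic^2s}$ by $\int_0^t e^{2ic^2s}\dd s=t\varphi_1(2ic^2t)$ and by the scalar identities \eqref{def:phi2}, \eqref{def_Psi_2} for $\int_0^t s\,e^{2ic^2s}\dd s$, which generate precisely the $\varphi_1$-weighted term $t\varphi_1(2ic^2t)v^3$ and a $\varphi_2$-type scalar weight multiplying $t\,\partial_{s_1}\mathcal{N}(0,\Lc,v)$.

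It remains to put this linear term in the closed, commutator-free form displayed in the statement (the form actually needed by the scheme). For this I would use $\int_0^t\partial_{s_1}\mathcal{N}(s_1,\Lc,v)\dd s_1=\mathcal{N}(t,\Lc,v)-v^3=e^{-it\Lc}(e^{it\Lc}v)^3-v^3$, together with the fact that replacing $t\,\partial_{s_1}\mathcal{N}(0,\Lc,v)$ by this difference costs only $\big\|\int_0^t(t-s_1)\,\partial_{s_1}^2\mathcal{N}(s_1,\Lc,v)\dd s_1\big\|_r\le\tfrac12 t^2k_2\|v\|_{r+2}^3$; since the accompanying scalar weight obeys the elementary $c$-independent bound $\big|\int_0^t s\,\varphi_1(2ic^2 s)\dd s\big|\le\tfrac12 t^2$ (using $|\varphi_1(i\xi)|\le1$), this correction again lands in $\mathcal{O}(t^3\mathcal{C}^2)$ and the claimed expansion follows. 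The step I expect to be the crux -- and the source of the delicacy here -- is exactly the $c$-uniformity: one must never expand $\Lc$ itself, which is $\mathcal{O}(c^2)$ as an operator at high regularity (cf.\ \eqref{bound:Lc}), but only ever the commutators $\mathcal{C}[f_{\text{cub}},\Lc]$ and $\mathcal{C}^2[f_{\text{cub}},\Lc]$, whose $H^{r+1}$/$H^{r+2}$ bounds from Lemma~\ref{bound:comm} are blind to $c$; at the same time every scalar weight $\varphi_1(2ic^2t)$, $\varphi_2(2ic^2t)$ that accumulates must be handled through $|\varphi_1(i\xi)|\le1$ and the above elementary estimate, never through a bound that blows up as $c\to\infty$.
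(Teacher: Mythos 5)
Your proposal is correct and takes essentially the same route as the paper's own proof: the same filtered function $\mathcal{N}(s,\Lc,v)=e^{-is\Lc}(e^{is\Lc}v)^3$, the second order Taylor expansion whose remainder carries the double commutator structure \eqref{2com} bounded uniformly in $c$ via Lemma \ref{bound:comm} and $\Vert e^{is\Lc}\Vert_r=1$, and the finite-difference replacement $\partial_s\mathcal{N}(0,\Lc,v)\approx t^{-1}\big(\mathcal{N}(t,\Lc,v)-\mathcal{N}(0,\Lc,v)\big)$ at the cost of an $\mathcal{O}(t\,\mathcal{C}^2)$ term, which is exactly the paper's stability-motivated step. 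Even your slight blurring of whether the resulting scalar weight is $\Psi_2(2ic^2t)=t^{-2}\int_0^t s\,e^{2ic^2s}\,\mathrm{d}s$ from \eqref{def_Psi_2} or the $\varphi_2(2ic^2t)$ of \eqref{def:phi2} appearing in the statement mirrors the paper's own proof, which likewise passes to the $\varphi_2$ form by simply invoking \eqref{def:phi2}, and your $c$-independent bound on the weight makes the error estimate go through in either case.
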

\begin{proof}
As in Lemma \ref{lem:I1} we define the filtered function
$$
\mathcal{N}(s_1,\Lc,v) =  e^{-i s_1 \Lc}
\left(e^{i s_1  \Lc}v\right)^3
$$
which allows us to express the oscillatory integral \eqref{I12} as follows
\begin{align*}
\mathcal{I}_1(t, c\nab, v) 
& = \int_0^t e^{2i c^2s}   \mathcal{N}(s,\Lc,v)  ds.
\end{align*}
Now we employ a second order expansion of $\mathcal{N}(s,\Lc,v) $ around $s =0$
\[
\mathcal{N}(s,\Lc,v) = \mathcal{N}(0,\Lc,v) +s  \partial_{s} \mathcal{N}(0,\Lc,v) + \int_0^{s}\int_0^{s_1}  \partial_{\xi }^2  \mathcal{N}(\xi,\Lc,v)
d\xi ds_1
\]
where  $ \partial_{s_1}^2  \mathcal{N}(s_1,\Lc,v) $ obeys the improved error structure
\begin{align}\label{2com}
\partial_{s_1}^2\mathcal{N}(s_1,\Lc,v) 
= e^{-i s_1 \Lc}  \mathcal{C}^2\left[f,  { \Lc} \right]
 \left(e^{i s_1  \Lc}v,e^{i s_1  \Lc}v,e^{i s_1  \Lc}v \right)
\end{align}
where we recall that 
$$ \mathcal{C}^2\left[f,  { \Lc}\right] = \mathcal{C}\left[
 \mathcal{C}\left[
f, { \Lc}
\right], { \Lc}
\right].$$
{
Thanks to Lemma \ref{bound:comm} and the fact that $e^{i s_1 \Lc}$ is a linear isometry in $H^r$ we have that
\begin{align*}
\left \Vert 
 \int_0^{s}\int_0^{s_1}    \partial_{\xi }^2  \mathcal{N}(\xi,\Lc,v)  d\xi ds_1 \right \Vert_r
& \leq 
 \int_0^{s}\int_0^{s_1}    \left \Vert \mathcal{C}^2\left[f, \mathcal{L}_c\right]
 \left(e^{i s_1  \Lc}v,e^{i s_1  \Lc}v,e^{i s_1  \Lc}v \right) \right \Vert_r 
  d\xi ds_1 \\
  & \leq k 
 \int_0^{s}\int_0^{s_1}    \left \Vert  e^{i s_1  \Lc}v  \right \Vert_{r+2}^3
  d\xi ds_1 \\& 
   \leq k s^2  \left\Vert  
 v
 \right \Vert_{r+2}^3.
\end{align*}
}
Thus,  we obtain that
\begin{align}\label{I112}
\mathcal{I}_1(t, c\nab, v) 
& = \int_0^t e^{2i c^2s}   \left( v^3 +s  \partial_{s} \mathcal{N}(0,\Lc,v) \right)  ds + \mathcal{O}\left( t^3 \mathcal{C}^2[f_{\text{cub}}(\cdot,\cdot,\cdot), \Lc](v,v,v)  \right).
\end{align}
where we have used that $ \mathcal{N}(0,\Lc,v) = v^3$.

 In order to guarantee stability of the scheme we will not explicitly embed $ \partial_{s} \mathcal{N}(0,\Lc,v)   $ into our scheme. Instead we will exploit that  formally we have for some $0 \leq\eta\leq t$ that
\begin{equation*}
\partial_s \mathcal{N}(0,\Lc,v)  =  \frac{\delta_{s } \mathcal{N}(s,\Lc,v) }{t}\vert_{s = t}+ \mathcal{O}\left(t \partial_{s}^2 \mathcal{N}(0,\Lc,v)\right) 
\end{equation*}
with the standard shift operator $$\delta_\zeta g(\zeta) := g(\zeta)- g(0)$$
such that
$$
\frac{\delta_{\zeta } \mathcal{N}(\zeta,\Lc,v) }{t}\vert_{\zeta = t} = \frac{1}{t} \left(  e^{-i t \Lc}
\left(e^{i t  \Lc}v\right)^3 - v^3\right) .
$$
Together with \eqref{I112} and the definition of the $\varphi_2$ function (see \eqref{def:phi2}) this yields the assertion.
\end{proof}
\begin{lemma}[Second order approximation of the integral \eqref{I22}]\label{lem:I22}
It holds that
\begin{align*}
3  \int_0^t  &e^{-i s  (c^2+\Lc)}\left[\left(e^{i s  (c^2+\Lc)}v\right)^2  e^{-i s  (c^2+\Lc)} \overline{v}\right]ds \\& =  3t v^2  \varphi_1(-2i t \Lc) \overline v+
3t 
\left(e^{-i t   \Lc}\left[\left(e^{i t   \Lc}v \right)^2  e^{ i t   \Lc}  \varphi_2(-2i t \Lc) \overline{v}  \right] 
  - v^2  \varphi_2(-2i t \Lc) \overline{v}
\right)\\& + \mathcal{O}\left( t^3 \mathcal{C}^2[f_{\text{cub}}(\cdot,\cdot,\cdot), \Lc](v,v,\overline v)  \right).
\end{align*}
\end{lemma}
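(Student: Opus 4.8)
The plan is to derive the second-order refinement of Lemma~\ref{lem:I2} in exactly the way Lemma~\ref{lem:I12} refines Lemma~\ref{lem:I1}. The first step is to kill the $c^2$-phases: since $e^{is(c^2+\Lc)}=e^{isc^2}e^{is\Lc}$ with $e^{isc^2}$ a scalar, the factors collect as $e^{-isc^2}(e^{isc^2})^2e^{-isc^2}=1$, so the integral in question equals $3\int_0^t e^{-is\Lc}\big[(e^{is\Lc}v)^2\,e^{-is\Lc}\overline v\big]\,ds$; in particular no $c^2$ survives, so the only issue here is low regularity, not uniformity in $c$. I would then reintroduce the two-scale filtered function from the proof of Lemma~\ref{lem:I2},
$$
\mathcal N(s,s_1,\Lc,v)=e^{-is_1\Lc}\big[(e^{is_1\Lc}v)^2\,e^{is_1\Lc}e^{-2is\Lc}\overline v\big],
$$
for which $\mathcal N(s,s,\Lc,v)$ is the integrand, $\mathcal N(s,0,\Lc,v)=v^2e^{-2is\Lc}\overline v$, and $\partial_{s_1}\mathcal N(s,s_1,\Lc,v)=e^{-is_1\Lc}\,\mathcal C[f_{\text{cub}},i\Lc]\big(e^{is_1\Lc}v,\,e^{is_1\Lc}v,\,e^{is_1\Lc}e^{-2is\Lc}\overline v\big)$ as recorded there.

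Next I would Taylor expand $\mathcal N(s,\cdot,\Lc,v)$ to second order at $s_1=0$, i.e.\ $\mathcal N(s,s,\Lc,v)=\mathcal N(s,0,\Lc,v)+s\,\partial_{s_1}\mathcal N(s,0,\Lc,v)+\int_0^s\!\int_0^{s_1}\partial_\xi^2\mathcal N(s,\xi,\Lc,v)\,d\xi\,ds_1$. Differentiating the identity for $\partial_{s_1}\mathcal N$ once more and using that $\mathcal C[f_{\text{cub}},i\Lc]$ is again trilinear yields $\partial_{s_1}^2\mathcal N(s,s_1,\Lc,v)=e^{-is_1\Lc}\,\mathcal C^2[f_{\text{cub}},i\Lc]\big(e^{is_1\Lc}v,\,e^{is_1\Lc}v,\,e^{is_1\Lc}e^{-2is\Lc}\overline v\big)$, the nested-commutator structure of \eqref{2com} with the slow factor $e^{-2is\Lc}\overline v$ sitting in the third slot; hence the Taylor remainder, after the inner double integral and the outer $\int_0^t(\cdot)\,ds$, is $\mathcal O\big(t^3\,\mathcal C^2[f_{\text{cub}}(\cdot,\cdot,\cdot),\Lc](v,v,\overline v)\big)$ by Lemma~\ref{bound:comm}. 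The zeroth-order term gives $\int_0^t\mathcal N(s,0,\Lc,v)\,ds=v^2\big(\int_0^te^{-2is\Lc}\,ds\big)\overline v=t\,v^2\varphi_1(-2it\Lc)\overline v$, which is the leading term claimed.

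It remains to handle $\int_0^t s\,\partial_{s_1}\mathcal N(s,0,\Lc,v)\,ds=\int_0^t s\,\mathcal C[f_{\text{cub}},i\Lc]\big(v,v,e^{-2is\Lc}\overline v\big)\,ds$, which still carries $\Lc$ through the commutator and therefore, as in Lemma~\ref{lem:I12}, should not be embedded verbatim if stability is to be preserved. Following that proof I would undo the commutator by recognising, for a frozen companion $\overline w$, that $\mathcal C[f_{\text{cub}},i\Lc](v,v,\overline w)=\partial_{s_1}\big(e^{-is_1\Lc}[(e^{is_1\Lc}v)^2e^{is_1\Lc}\overline w]\big)\big|_{s_1=0}$, replace this derivative by the shifted difference $\frac1t\big(e^{-it\Lc}[(e^{it\Lc}v)^2e^{it\Lc}\overline w]-v^2\overline w\big)$ up to an $\mathcal O\big(t\,\partial_{s_1}^2(\cdot)\big)$ error, and carry out the elementary scalar integration of $\int_0^t s\,e^{-2is\Lc}\overline v\,ds$ using \eqref{def:phi2} to produce a $\varphi_2(-2it\Lc)$-factor. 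This yields precisely $3t\big(e^{-it\Lc}[(e^{it\Lc}v)^2e^{it\Lc}\varphi_2(-2it\Lc)\overline v]-v^2\varphi_2(-2it\Lc)\overline v\big)$, with all errors from the undoing step absorbed into the $\mathcal O(t^3\mathcal C^2)$ remainder.

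The delicate step, as in Lemma~\ref{lem:I12}, is this last one: one has to arrange the undoing of the commutator so that the companion function sitting inside the finite difference is exactly $\varphi_2(-2it\Lc)\overline v$, which forces one to keep careful track of the auxiliary functions $\varphi_1,\varphi_2$ of \eqref{def:phi2} and $\Psi_2$ of \eqref{def_Psi_2} and of the value of $\int_0^t s\,e^{-2is\Lc}\,ds$ in terms of them, while at the same time making sure the resulting finite-difference/filtered form only involves the bounded operators $e^{\pm it\Lc}$ and $\varphi_2(-2it\Lc)$ (so that the term inherits a stability bound via $|\varphi_2(i\xi)|\lesssim1$ and does not reintroduce $\Lc$). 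By contrast the $\mathcal C^2$-structure of $\partial_{s_1}^2\mathcal N$ is routine: since $\mathcal C[\cdot,i\Lc]$ maps a multilinear map to a multilinear map, the single-derivative identity from the proof of Lemma~\ref{lem:I2} can simply be iterated, the slow factor $e^{-2is\Lc}\overline v$ remaining a passive third argument throughout.
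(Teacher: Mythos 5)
Your proposal is correct and follows essentially the same route as the paper: the same two-scale filtered function $\mathcal{N}(s,s_1,\Lc,v)$, a second-order Taylor expansion in $s_1$ whose second derivative carries the $\mathcal{C}^2[f_{\text{cub}},\Lc]$ structure and yields the $\mathcal{O}(t^3)$ remainder, and the same finite-difference (shift) replacement of $\partial_{s_1}\mathcal{N}(s,0,\Lc,v)$ by $\tfrac1t\bigl(\mathcal{N}(s,t,\Lc,v)-\mathcal{N}(s,0,\Lc,v)\bigr)$ followed by the $s$-integration producing the $\varphi_2(-2it\Lc)$ term. No further comparison is needed.
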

\begin{proof}
The proof follows similarly to the proof of Lemma \ref{lem:I12} by including the approximation
\begin{equation*}
\partial_s \mathcal{N}(s, 0,\Lc,v)  =  \frac{\delta_{s_1} \mathcal{N}(s,s_1 ,\Lc,v) }{t}\vert_{s_1= t}+ \mathcal{O}\left(t \partial_{s_1}^2 \mathcal{N}(s,s_1,\Lc,v)\right) 
\end{equation*}
for the appropriate filtered function (cf. \eqref{filter2})
\begin{equation*}
\mathcal{N}(s, s_1,\Lc,v) =  e^{-i s_1   \Lc}\left[\left(e^{i s_1   \Lc}v \right)^2  e^{ i s_1   \Lc} e^{-2 i s   \Lc} \overline{v}  \right] 
\end{equation*}
into the numerical discretisation. With the observation that
\begin{equation*}
\partial_s \mathcal{N}(s, 0,\Lc,v)  =  \frac{1}{t}
\left(
  e^{-i t   \Lc}\left[\left(e^{i t   \Lc}v \right)^2  e^{ i t   \Lc} e^{-2 i s   \Lc} \overline{v}  \right] 
  - v^2 e^{-2 i s   \Lc} \overline{v}
\right)+ \mathcal{O}\left(t \partial_{s_1}^2 \mathcal{N}(s,s_1,\Lc,v)\right) 
\end{equation*}
we thus obtain
\begin{align*}
  \int_0^t  & e^{-i s  (c^2+\Lc)}\left[\left(e^{i s  (c^2+\Lc)}v\right)^2  e^{-i s  (c^2+\Lc)} \overline{v}\right]ds
\\ &= \int_0^t  \left( \mathcal{N}(s, 0,\Lc,v) + s  \partial_{s_1}  \mathcal{N}(s, 0,\Lc,v) + \int_0^{s}
\partial_{s_1}^2 \mathcal{N}(s, s_1 ,\Lc,v) d\xi\right) ds\\
& =  t v^2  \varphi_1(-2i t \Lc) \overline v+
t 
\left(
  e^{-i t   \Lc}\left[\left(e^{i t   \Lc}v \right)^2  e^{ i t   \Lc}  \varphi_2(-2i t \Lc) \overline{v}  \right] 
  - v^2  \varphi_2(-2i t \Lc) \overline{v}
\right)\\& + \mathcal{O}\left( t^3 \mathcal{C}^2[f_{\text{cub}}(\cdot,\cdot,\cdot), \Lc](v,v,\overline v)  \right).
\end{align*}
\end{proof}
\begin{lemma}[Approximation of the integral \eqref{I32}]\label{lem:I32}
It holds that
\begin{align*}
 3  \int_0^t &   e^{-i s  (c^2+\Lc)}\left[\left(e^{-i s  (c^2+\Lc)} \overline{v}\right)^2  e^{i s  (c^2+\Lc)}  v \right]ds  \\ & = 3  t     v \varphi_1({- 2 i t c\nab })\overline{v}^2 
 + 3 t \Big( e^{-i t   \Lc}\left[\left(  e^{ i t   \Lc}  {v} \right) \varphi_2({- 2 i t c\nab })    e^{ i t   \Lc}  \overline v^2 \right]
 - v \varphi_2({- 2 i t c\nab })   \overline v^2\Big)\\
 & + 3t \Big( v  \varphi_2({- 2 i t c\nab })   e^{ it   \Lc} \left(e^{-i t   \Lc}\overline v \right)^2 - v\varphi_2({- 2 i t c\nab })     \overline v^2 \Big)
 +  \mathcal{O}\left( t^2 \mathcal{C}[f_{\text{cub}}(\cdot,\cdot,\cdot), \Lc](v,\overline v,\overline v)  \right).
\end{align*}
\end{lemma}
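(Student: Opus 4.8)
The plan is to repeat the proof of Lemma~\ref{lem:I3} but to carry the expansion of the filtered integrand one order further, and to stabilise the first-order correction by a difference quotient exactly as in the second order Lemmas~\ref{lem:I12} and~\ref{lem:I22}. First I would collect the pure $c^2$--oscillations in \eqref{I32}: the phases in $(e^{-is(c^2+\Lc)}\overline v)^2 e^{is(c^2+\Lc)}v$ combine to $e^{-2isc^2}$ times the three--scale filtered function
$$
\mathcal{N}(s,s_1,s_2,\Lc,v)=e^{-is_1\Lc}\Big[\big(e^{is_1\Lc}v\big)\,e^{-2is\Lc}\,e^{is_1\Lc}e^{is_2\Lc}\big(e^{-is_2\Lc}\overline v\big)^2\Big],
$$
the two auxiliary times $s_1,s_2$ being forced by the quadratic factor $\overline v^2$ (as in Lemma~\ref{lem:I3}). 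Since $\mathcal{N}(s,0,0,\Lc,v)=v\,e^{-2is\Lc}\overline v^2$, the term $\int_0^t e^{-2isc^2}\mathcal{N}(s,0,0,\Lc,v)\,ds=\int_0^t v\,e^{-2isc\nab}\overline v^2\,ds=t\,v\,\varphi_1(-2itc\nab)\overline v^2$ reproduces the leading contribution.

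Next I would Taylor expand $s_1\mapsto\mathcal{N}(s,s_1,0,\Lc,v)$ and $s_2\mapsto\mathcal{N}(s,0,s_2,\Lc,v)$ to second order around zero. By the chain rule, and exactly as in Lemmas~\ref{lem:I1}--\ref{lem:I4}, the first derivatives $\partial_{s_1}\mathcal{N}(s,0,0,\Lc,v)$ and $\partial_{s_2}\mathcal{N}(s,0,0,\Lc,v)$ have the commutator structure $e^{-is_1\Lc}\,\mathcal{C}[f_{\text{cub}}(\cdot,\cdot,\cdot),i\Lc](\dots)$ evaluated at filtered arguments (one $\Lc$ per factor), while the second-order remainders carry a $\mathcal{C}^2[f_{\text{cub}}(\cdot,\cdot,\cdot),\Lc]$--type structure. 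After multiplying by $e^{-2isc^2}$ and integrating over $s\in[0,t]$ these second-order remainders become an error term of exactly the kind appearing in Lemmas~\ref{lem:I12} and~\ref{lem:I22}, which by Lemma~\ref{bound:comm} is bounded in $H^r$ uniformly in $c$.

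To avoid differentiating inside the scheme (and thereby keep stability) I would, as in Lemmas~\ref{lem:I12}--\ref{lem:I22}, replace $s\,\partial_{s_1}\mathcal{N}(s,0,0,\Lc,v)$ by $\frac{s}{t}\,\delta_{s_1}\mathcal{N}(s,s_1,0,\Lc,v)\vert_{s_1=t}$ and $s\,\partial_{s_2}\mathcal{N}(s,0,0,\Lc,v)$ by $\frac{s}{t}\,\delta_{s_2}\mathcal{N}(s,0,s_2,\Lc,v)\vert_{s_2=t}$, the incurred error again being of $\mathcal{C}^2$--type and of higher order in $t$. In each of $\mathcal{N}(s,s_1,0,\Lc,v)$ and $\mathcal{N}(s,0,s_2,\Lc,v)$ the entire $s$--dependence sits in the single factor $e^{-2is\Lc}$, so the $s$--independent factors pull out of the $s$--integral and one is reduced to $\frac1t\int_0^t s\,e^{-2is(c^2+\Lc)}\,ds$; recalling $c\nab=\Lc+c^2$ and reorganising via \eqref{def:phi2} and \eqref{def_Psi_2} this equals $t\,\varphi_2(-2itc\nab)$ up to a contribution that recombines with the leading $\varphi_1$ term. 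Writing out $\delta_{s_1}\mathcal{N}(s,t,0,\Lc,v)$ (filtering of the $v$--slot and of the outer $e^{-is_1\Lc}$) then produces the bracket $e^{-it\Lc}\big[(e^{it\Lc}v)\,\varphi_2(-2itc\nab)\,e^{it\Lc}\overline v^2\big]-v\,\varphi_2(-2itc\nab)\overline v^2$, and $\delta_{s_2}\mathcal{N}(s,0,t,\Lc,v)$ (filtering of the block $e^{is_2\Lc}(e^{-is_2\Lc}\overline v)^2$) produces $v\,\varphi_2(-2itc\nab)\,e^{it\Lc}(e^{-it\Lc}\overline v)^2-v\,\varphi_2(-2itc\nab)\overline v^2$; multiplying by $3t$, adding the leading term and the error, gives the assertion.

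The main obstacle is the three--scale bookkeeping: one must keep track of which of the filters $e^{\pm is_j\Lc}$ is differentiated, assemble the difference-quotient surrogates so that every $\varphi$--function that appears has argument $i\times(\text{self-adjoint operator})$ — which is exactly what keeps the scheme stable through $|\varphi_1(i\xi)|,|\varphi_2(i\xi)|\le\text{const}$ — and verify that the leftover pieces, which after all reductions are commutators $\mathcal{C}[f_{\text{cub}}(\cdot,\cdot,\cdot),\Lc]$ and $\mathcal{C}^2[f_{\text{cub}}(\cdot,\cdot,\cdot),\Lc]$ of filtered functions, are controlled in $H^r$ uniformly in $c$. This last point is precisely Lemma~\ref{bound:comm}, and is what prevents the crude estimate $\Lc\sim-\tfrac12\Delta+\dots$ from introducing a power of $c^2$.
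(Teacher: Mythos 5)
Your proposal follows the paper's proof essentially verbatim: the same three-scale filtered function $\mathcal{N}(s,s_1,s_2,\Lc,v)$, the same Taylor expansion in $s_1,s_2$ with the derivatives $\partial_{s_1}\mathcal{N}(s,0,0,\Lc,v)$, $\partial_{s_2}\mathcal{N}(s,0,0,\Lc,v)$ replaced by the difference quotients $\tfrac1t\,\delta_{s_j}\mathcal{N}\vert_{s_j=t}$ for stability, and the same identification of the two bracket corrections, with all remainders controlled via Lemma~\ref{bound:comm}. If anything you are more explicit than the paper (whose proof ends with ``this implies the assertion'') about the final $s$-integration yielding the $\varphi_2$-type operator through \eqref{def:phi2} and \eqref{def_Psi_2}; the small bookkeeping ambiguity there between $\Psi_2$ and $\varphi_2$ is harmless, since both are uniformly bounded and the resulting discrepancy is absorbed by the stated $\mathcal{O}\left(t^2\,\mathcal{C}[f_{\text{cub}}(\cdot,\cdot,\cdot),\Lc](v,\overline v,\overline v)\right)$ remainder.
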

\begin{proof}
Again we define the three scale expansion
$$
\mathcal{N}(s, s_1,s_2, \Lc,v) =  e^{-i s_1   \Lc}\left[\left(  e^{ i s_1   \Lc}  {v} \right)e^{- 2i s   \Lc}   e^{ i s_1   \Lc}e^{ i s_2   \Lc} \left(e^{-i s_2   \Lc}\overline v \right)^2 \right]
$$ 
such that (cf. proof of Lemma \ref{lem:I3})
\begin{align*}  \int_0^t   & e^{-i s  (c^2+\Lc)}\left[\left(e^{-i s  (c^2+\Lc)} \overline{v}\right)^2  e^{i s  (c^2+\Lc)}  v \right]ds = \int_0^t e^{-2 i s  c^2} \mathcal{N}(s, s,s, \Lc,v)ds \\
& = \int_0^t v  e^{- 2i s   (\Lc+c^2)} \overline v^2  ds +\int_0^t  s \Big( \partial_{s_1} \mathcal{N}(s, 0  ,s , \Lc,v) + \partial_{s_2} \mathcal{N}(s, 0  ,0, \Lc,v)  d s_2  \Big) ds\\
& +  \mathcal{O}\left(\partial_{s_1}^2 \mathcal{N}\right)+  \mathcal{O}\left(\partial_{s_2}^2 \mathcal{N} \right)\\
& = \int_0^t v  e^{- 2i s   (\Lc+c^2)} \overline v^2  ds +\int_0^t  s \Big( \partial_{s_1} \mathcal{N}(s, 0  ,0 , \Lc,v) + \partial_{s_2} \mathcal{N}(s, 0  ,0, \Lc,v)  d s_2  \Big) ds\\
& +  \mathcal{O}\left(t^3 \partial_{s_1}^2 \mathcal{N} \right)+  \mathcal{O}\left(t^3 \partial_{s_2}^2 \mathcal{N} \right) +\mathcal{O}\left(t^3 \partial_{s_1} \partial_{s_2} \mathcal{N} \right).
  \end{align*}
  Next we use that
\begin{align*}
&  \partial_{s_1} \mathcal{N}(s, 0  ,0 , \Lc,v)  = \frac{1}{t} \left( \mathcal{N}(s, t  ,0 , \Lc,v)  -  \mathcal{N}(s, 0  ,0 , \Lc,v) \right)+  \mathcal{O}\left(t \partial_{s_1}^2 \mathcal{N} \right) \\
   & \partial_{s_2} \mathcal{N}(s, 0  ,0 , \Lc,v)  = \frac{1}{t} \left( \mathcal{N}(s, 0  ,t , \Lc,v)  -  \mathcal{N}(s, 0  ,0 , \Lc,v) \right)+  \mathcal{O}\left(t \partial_{s_2}^2 \mathcal{N} \right)
  \end{align*}
  as well as that
 \begin{align*}
&  \mathcal{N}(s, t  ,0 , \Lc,v)  -  \mathcal{N}(s, 0  ,0 , \Lc,v)  = 
 e^{-i t   \Lc}\left[\left(  e^{ i t   \Lc}  {v} \right)e^{- 2i s   \Lc}   e^{ i t   \Lc}  \overline v^2 \right]
 - ve^{- 2i s   \Lc}   \overline v^2\\
 & \mathcal{N}(s, 0  ,t , \Lc,v)  -  \mathcal{N}(s, 0  ,0 , \Lc,v) =
v  e^{- 2i s   \Lc}   e^{ it   \Lc} \left(e^{-i t   \Lc}\overline v \right)^2 - ve^{- 2i s   \Lc}   \overline v^2.
  \end{align*}
  This implies the assertion.
\end{proof}
\begin{lemma}[Approximation of the integral \eqref{I42}]\label{lem:I42}
It holds that
\begin{align*}
 \int_0^t  &  e^{-i s  (c^2+\Lc)}\left(e^{-i s (c^2+\Lc)}\overline v \right)^3 ds  \\&
  =   t      \varphi_1({- 2 i t ( c\nab+c^2) })\overline{v}^3  
  + t \left(e^{i t  \Lc}  \varphi_2({- 2 i t ( c\nab+c^2) })  \left(e^{-i t   \Lc}\overline v \right)^3 -  \varphi_2({- 2 i t ( c\nab+c^2) }) \overline v^3\right)
  \\&+  \mathcal{O}\left( t^3 \mathcal{C}^2[f_{\text{cub}}(\cdot,\cdot,\cdot), \Lc](\overline v,\overline v,\overline v)  \right).
\end{align*}
\end{lemma}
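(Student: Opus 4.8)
The plan is to follow the proof of Lemma~\ref{lem:I4}, but carrying the Taylor expansion of the filtered function one order further, exactly as was done for~\eqref{I12} in Lemma~\ref{lem:I12}. Recall from the proof of Lemma~\ref{lem:I4} the filtered function
\[
\mathcal{N}(s,s_1,\Lc,v) = e^{i s_1 \Lc}\, e^{-2 i s \Lc}\left(e^{-i s_1 \Lc}\overline v\right)^3 ,\qquad \mathcal{N}(s,0,\Lc,v) = e^{-2 i s \Lc}\,\overline v^3,
\]
together with the identity $\int_0^t e^{-i s(c^2+\Lc)}\big(e^{-i s(c^2+\Lc)}\overline v\big)^3\,ds = \int_0^t e^{-4 i s c^2}\,\mathcal{N}(s,s,\Lc,v)\,ds$. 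First I would expand $s_1\mapsto\mathcal{N}(s,s_1,\Lc,v)$ to second order around $s_1=0$ and evaluate the result at $s_1=s$, so that the integral splits into a zeroth order term, a linear term $\int_0^t s\,e^{-4 i s c^2}\partial_{s_1}\mathcal{N}(s,0,\Lc,v)\,ds$, and a quadratic Taylor remainder.

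The zeroth order term is integrated exactly: since $2c^2+\Lc = c\nab+c^2$,
\[
\int_0^t e^{-4 i s c^2}\,\mathcal{N}(s,0,\Lc,v)\,ds = \Big(\int_0^t e^{-2 i s(c\nab+c^2)}\,ds\Big)\overline v^3 = t\,\varphi_1\big({-2 i t(c\nab+c^2)}\big)\overline v^3,
\]
which is the leading term in the statement. For the quadratic remainder I would differentiate $e^{i s_1\Lc}(e^{-i s_1\Lc}\overline v)^3$ twice, using (as in the derivation of~\eqref{2com}) that $\Lc$ commutes with $e^{\pm i\zeta\Lc}$ together with the product rule, to obtain the structural identity
\[
\partial_{s_1}^2\mathcal{N}(s,s_1,\Lc,v) = e^{-2 i s\Lc}\, e^{i s_1\Lc}\;\mathcal{C}^2\big[f_{\text{cub}}(\cdot,\cdot,\cdot),\Lc\big]\big(e^{-i s_1\Lc}\overline v,\,e^{-i s_1\Lc}\overline v,\,e^{-i s_1\Lc}\overline v\big);
\]
by $\Vert e^{i\zeta\Lc}\Vert_r=1$ (see~\eqref{stabi}) this quantity is $\mathcal{O}\big(\mathcal{C}^2[f_{\text{cub}}(\cdot,\cdot,\cdot),\Lc](\overline v,\overline v,\overline v)\big)$ uniformly in $s,s_1$, so the Taylor remainder, integrated against $e^{-4 i s c^2}$, contributes $\mathcal{O}\big(t^3\,\mathcal{C}^2[f_{\text{cub}}(\cdot,\cdot,\cdot),\Lc](\overline v,\overline v,\overline v)\big)$, as asserted (bounding it through Lemma~\ref{bound:comm}).

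It remains to treat the linear term. As in Lemmas~\ref{lem:I12}--\ref{lem:I32}, embedding $\partial_{s_1}\mathcal{N}(s,0,\Lc,v)$ itself would cost an extra spatial derivative (through the commutator) and spoil the stability of the scheme, so I would replace it by the finite difference
\[
\partial_{s_1}\mathcal{N}(s,0,\Lc,v) = \frac1t\big(\mathcal{N}(s,t,\Lc,v) - \mathcal{N}(s,0,\Lc,v)\big) + \mathcal{O}\big(t\,\partial_{s_1}^2\mathcal{N}(s,\cdot,\Lc,v)\big),
\]
whose error is $\mathcal{O}\big(t\,\mathcal{C}^2[f_{\text{cub}}(\cdot,\cdot,\cdot),\Lc](\overline v,\overline v,\overline v)\big)$ by the identity above and therefore, after multiplying by $s$ and integrating, lands in the $\mathcal{O}(t^3\mathcal{C}^2)$ remainder. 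Since $\mathcal{N}(s,t,\Lc,v) - \mathcal{N}(s,0,\Lc,v) = e^{-2 i s\Lc}\big(e^{i t\Lc}(e^{-i t\Lc}\overline v)^3 - \overline v^3\big)$ and the operators $e^{\pm i t\Lc}$ do not depend on the integration variable $s$, I would factor them out of $\int_0^t(\cdot)\,ds$, evaluate the elementary integral $\int_0^t s\,e^{-4 i s c^2}e^{-2 i s\Lc}\,ds = \int_0^t s\,e^{-2 i s(c\nab+c^2)}\,ds$ using the definition of the $\varphi_2$ function (cf.~\eqref{def:phi2}), and recombine to obtain precisely $t\big(e^{i t\Lc}\varphi_2({-2 i t(c\nab+c^2)})(e^{-i t\Lc}\overline v)^3 - \varphi_2({-2 i t(c\nab+c^2)})\overline v^3\big)$. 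Collecting the three contributions yields the claim.

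The only genuine obstacle is the bookkeeping of the variable $s$, which plays two roles: the true oscillation $e^{-2 i s\Lc}$, which must be integrated against $e^{-4 i s c^2}$ to build the operators $\varphi_1,\varphi_2$ at the argument $-2 i t(c\nab+c^2)$, versus the Taylor variable frozen at $s_1=t$ inside the finite difference. One has to check that all operators produced this way ($e^{\pm i t\Lc}$ and $\varphi_1,\varphi_2$ of $-2 i t(c\nab+c^2)$) are functions of $-\Delta$, hence commute and may be moved freely in and out of the $s$-integral and the nonlinearity; once this is organised exactly as in Lemma~\ref{lem:I12} (this integral being the simplest of the four, with only the single auxiliary scale $s_1$), every remaining estimate is immediate from~\eqref{stabi} and Lemma~\ref{bound:comm} with constants independent of $c$.
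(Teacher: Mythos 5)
Your argument is correct and coincides with the paper's (very terse) proof: it uses the same filtered function $\mathcal{N}(s,s_1,\Lc,v)=e^{is_1\Lc}e^{-2is\Lc}\left(e^{-is_1\Lc}\overline v\right)^3$ from Lemma~\ref{lem:I4}, expands to second order in $s_1$, replaces the first derivative by the finite difference $\tfrac1t\big(\mathcal{N}(s,t,\Lc,v)-\mathcal{N}(s,0,\Lc,v)\big)$, and controls all remainders through $\mathcal{C}^2[f_{\text{cub}}(\cdot,\cdot,\cdot),\Lc]$ via Lemma~\ref{bound:comm}, exactly as in Lemmas~\ref{lem:I12}--\ref{lem:I32}. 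The only cosmetic point is your identification of $\tfrac1t\int_0^t s\,e^{-2is(c\nab+c^2)}\,ds$ with $t\varphi_2\big({-2it(c\nab+c^2)}\big)$ rather than with $t\Psi_2$ as defined in \eqref{def_Psi_2}, but this is precisely the convention adopted in the statements of Lemmas~\ref{lem:I12}--\ref{lem:I42} themselves, so your derivation matches the paper's intended proof.
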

\begin{proof}
The assertion follows similarly to the previous lemmata  by choosing the appropriate filtered function
$$
\mathcal{N}(s, s_1, \Lc,v) =  e^{i s_1   \Lc}  e^{- 2i s   \Lc}   \left(e^{-i s_1   \Lc}\overline v \right)^3 .
$$

\end{proof}
Plugging the above lemmata into \eqref{Duh3} yields together with the double commutator bound of Lemma \ref{bound:comm} that for $v= u(0)$
\begin{equation}\label{Duh4}
\begin{aligned}
u(t)  
& = e^{i t c\nab}v- i \frac18 c\nab^{-1}  e^{i t c\nab}\Big\{t \varphi_1(2i c^2t) v^3 + t\varphi_2 (2i c^2t)    \left(  e^{-i t \Lc}
\left(e^{i t  \Lc}v\right)^3 - v^3\right)
\\
&
+3t v^2  \varphi_1(-2i t \Lc) \overline v+
3t 
\left(e^{-i t   \Lc}\left[\left(e^{i t   \Lc}v \right)^2  e^{ i t   \Lc}  \varphi_2(-2i t \Lc) \overline{v}  \right] 
  - v^2  \varphi_2(-2i t \Lc) \overline{v}
\right)\\
& +3  t     v \varphi_1({- 2 i t c\nab })\overline{v}^2 
 + 3 t \Big( e^{-i t   \Lc}\left[\left(  e^{ i t   \Lc}  {v} \right) \varphi_2({- 2 i t c\nab })    e^{ i t   \Lc}  \overline v^2 \right]
 - v \varphi_2({- 2 i t c\nab })   \overline v^2\Big)\\
 & + 3t \Big( v  \varphi_2({- 2 i t c\nab })   e^{ it   \Lc} \left(e^{-i t   \Lc}\overline v \right)^2 - v\varphi_2({- 2 i t c\nab })     \overline v^2 \Big)\\&
 +  t      \varphi_1({- 2 i t ( c\nab+c^2) })\overline{v}^3  
  + t \left(e^{i t  \Lc}  \varphi_2({- 2 i t ( c\nab+c^2) })  \left(e^{-i t   \Lc}\overline v \right)^3 -  \varphi_2({- 2 i t ( c\nab+c^2) })\overline v^3\right)
\Big\}\\
&+  3 \sum_{j=1}^4 \mathcal{M}_j(t,c^2,v) + 6 \mathcal{M}_5(t,c^2,v) + 6 \mathcal{M}_6(t,c^2,v)+   \mathcal{B}_3(t,u),
\end{aligned}
\end{equation}
where the remainder $ \mathcal{B}_3(t,u)$ satisfies \eqref{B3}. The expansion \eqref{Duh4} motivates the second order uniformly accurate low regularity scheme
\begin{equation}\label{scheme2}
\begin{aligned}
u^{n+1} 
& = e^{i \tau c\nab}u^n- i \tau \frac18 c\nab^{-1}  e^{i \tau c\nab}\Big\{  \varphi_1(2i c^2\tau) (u^n)^3 +  \varphi_2 (2i c^2\tau)    \left(  e^{-i \tau \Lc}
\left(e^{i \tau  \Lc}u^n\right)^3 - (u^n)^3\right)
\\
&
+3 (u^n)^2  \varphi_1(-2i \tau \Lc) \overline{u^n}+
3
\left(e^{-i \tau   \Lc}\left[\left(e^{i \tau   \Lc}u^n \right)^2  e^{ i \tau   \Lc}  \varphi_2(-2i \tau \Lc) \overline{u^n}  \right] 
  - (u^n)^2  \varphi_2(-2i \tau \Lc) \overline{u^n}
\right)\\
& +3      u^n \varphi_1({- 2 i \tau c\nab }) \overline{(u^n)^2}
 + 3   \Big( e^{-i \tau   \Lc}\left[\left(  e^{ i \tau   \Lc}  {u^n} \right) \varphi_2({- 2 i \tau c\nab })    e^{ i \tau   \Lc}   \overline{(u^n)^2} \right]
 - u^n \varphi_2({- 2 i \tau c\nab })   \overline{(u^n)^2}\Big)\\
 & + 3 \Big( u^n  \varphi_2({- 2 i \tau c\nab })   e^{ i\tau   \Lc} \left(e^{-i \tau   \Lc}\overline{u^n} \right)^2 - u^n \varphi_2({- 2 i \tau c\nab })     \overline{(u^n)^2}  \Big)\\&
 +        \varphi_1({- 2 i \tau ( c\nab+c^2) }) \overline{(u^n)^3} 
  + \left(e^{i \tau  \Lc}  \varphi_2( { - 2 i \tau ( c\nab+c^2) })  \left(e^{-i \tau   \Lc}\overline{u^n} \right)^3 -  \varphi_2({ - 2 i \tau ( c\nab+c^2) })\overline{(u^n)^3} \right)
\Big\}\\
&+  3 \sum_{j=1}^4 \mathcal{M}_j(\tau,c^2,u^n) + 6 \mathcal{M}_5(\tau,c^2,u^n) + 6 \mathcal{M}_6(\tau,c^2,u^n)
\end{aligned}
\end{equation}
where $\mathcal{M}_{1,2,3,4,5,6}(\tau,c^2,u^n)$ are defined in \eqref{M1} up to \eqref{M6}. The following global error estimates holds true for the second order scheme \eqref{scheme2}.
\begin{theorem} \label{thm:glob2} Fix  $r>d/2$ and assume that the solution of \eqref{eq:kgr} satisfies $u \in \mathcal{C}([0,T];H^{r+2})$.  Then there exists a $\tau_0>0$ such that for all $0<\tau \leq \tau_0$  the following global error estimate holds for  $u^n$ defined in~\eqref{scheme2}
\begin{align*}
\Vert u(t_n)- u^n \Vert_{r} &  \leq    \tau^2  K\left(\sup_{0\leq t\leq t_{n}} \Vert u(t) \Vert_{r+2}\right) ,
\end{align*}
where $K$ depends on $t_n$ and the $H^{r+2}$ norm of the solution $u$, but can be chosen independently of $c$.
\end{theorem}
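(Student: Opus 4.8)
The plan is to run the same \emph{Lady Windermere's fan} argument already used for Theorem~\ref{thm:glob}, now one order higher. Denote by $\Phi^\tau$ the numerical flow defined by \eqref{scheme2} and by $\varphi^\tau$ the exact flow of \eqref{eq:kgr}. I would (i) establish the local error bound $\varphi^\tau(u(t_n))-\Phi^\tau(u(t_n)) = \mathcal{O}(\tau^3\,\mathcal{C}^2[f_{\text{cub}}(\cdot,\cdot,\cdot),\Lc](v_1,v_2,v_3)(t_n)) + \mathcal{O}(\tau^3\|u(t_n)\|_{r+2})$ with $v_j\in\{u,\overline u\}$, (ii) prove $H^r$ stability of $\Phi^\tau$ with Lipschitz constant uniform in $c$, and (iii) combine the two.

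\textbf{Local error.} The content here is the expansion \eqref{Duh4}. It is produced by iterating Duhamel's formula for \eqref{eq:NLSc} twice (creating $\mathcal{B}_1$, see \eqref{B1}), inserting the first order description of $\mathcal{I}(s,c\nab,u(0))$ from Corollary~\ref{cor:osc} (creating $\mathcal{B}_2$), replacing the coupled oscillations $s\,\varphi_1(-\ell i s c\nab)$ in the nonlocal cubic terms by $s\,\varphi_1(-\ell i s c^2)$ at the price of $\mathcal{O}(s^2\Delta)$, integrating the remaining $s$–integrals exactly via the Lemma preceding \eqref{def:phi2} to generate the explicit terms $\mathcal{M}_1,\dots,\mathcal{M}_6$ of \eqref{M1}–\eqref{M6}, and finally replacing $\mathcal{I}(t,c\nab,u(0))$ by the second order approximations of Lemma~\ref{lem:I12}, Lemma~\ref{lem:I22}, Lemma~\ref{lem:I32} and Lemma~\ref{lem:I42}. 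Everything discarded in the first four steps is collected in $\mathcal{B}_3$, which obeys \eqref{B3}, i.e.\ is $\mathcal{O}(t^3\|u\|_{r+2})$, while the remainders of the four integral lemmata are $\mathcal{O}(t^3\,\mathcal{C}^2[f_{\text{cub}}(\cdot,\cdot,\cdot),\Lc](v_1,v_2,v_3))$. Since \eqref{scheme2} is exactly \eqref{Duh4} with $v=u^n$, $t=\tau$ and $\mathcal{B}_3$ dropped, subtraction yields the claimed local error, and the second estimate in Lemma~\ref{bound:comm} turns $\mathcal{C}^2[f_{\text{cub}}(\cdot,\cdot,\cdot),\Lc](v_1,v_2,v_3)$ into $\mathcal{O}(\|u(t_n)\|_{r+2}^3)$. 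This is the step where it is crucial — and not automatic — that only two spatial derivatives are lost.

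\textbf{Stability.} As in Lemma~\ref{thm:stab}, I would show $\|\Phi^\tau(v)-\Phi^\tau(w)\|_r \le e^{\tau L}\|v-w\|_r$ for $v,w$ in a fixed ball of $H^r$, with $L$ independent of $c$. Writing $\Phi^\tau(v)=e^{i\tau c\nab}v + (\text{nonlinear part})$, by \eqref{stabi} it suffices to bound the nonlinear part by $\mathcal{O}(\tau)$ in $H^r\to H^r$ operator norm and to note it is polynomial in its argument, hence locally Lipschitz on bounded sets (recall $H^r$ is an algebra for $r>d/2$), with constants uniform in $c$. The terms inside the brace of \eqref{scheme2} carry the prefactor $\tau\,c\nab^{-1}$, are cubic in $v$, and involve only the unitary operators $e^{\pm i\tau\Lc}$ and Fourier multipliers of the form $\varphi_1(i\,\cdot\,)$, $\varphi_2(i\,\cdot\,)$ evaluated at real arguments, which satisfy $|\varphi_1(i\xi)|\le1$ and $|\varphi_2(i\xi)|\le\tfrac12$ for $\xi\in\mathbb{R}$; hence they are $\mathcal{O}(\tau)$. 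The terms $\mathcal{M}_j(\tau,c^2,u^n)$, $j=1,\dots,6$, are quintic in $v$ and, using $\|c\nab^{-1}\|_r\le1$, boundedness of $\varphi_1,\varphi_2,\Psi_2$ on the imaginary axis, and the Lemma preceding \eqref{def:phi2} (which bounds the relevant oscillatory integrals by $\tfrac12 t^2$ uniformly in $c$), are in fact $\mathcal{O}(\tau^2)$, so a fortiori $\mathcal{O}(\tau)$. Collecting these estimates gives the stability bound.

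\textbf{Conclusion and main obstacle.} With an $\mathcal{O}(\tau^3)$ local error at regularity $H^{r+2}$ and $(1+\tau L)$ stability, a Lady Windermere's fan argument (\cite{HLW}) combined with the discrete Gronwall inequality gives $\|u(t_n)-u^n\|_r \le \tau^2 K(\sup_{0\le t\le t_n}\|u(t)\|_{r+2})$, with $K$ depending on $t_n$ and on the $H^{r+2}$ norm of $u$ but not on $c$; the a priori bound on $\sup_k\|u^k\|_r$ needed to run the local Lipschitz estimates follows from the assumed $H^{r+2}$ regularity of $u$ and the order-$\tau$ convergence of Theorem~\ref{thm:glob}. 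The main obstacle is the local-error step: assembling \eqref{Duh4} with the correct $\mathcal{O}(\tau^3)$ remainder structure — in particular verifying that the $c\nab\to c^2$ substitution and the exact integration producing the $\mathcal{M}_j$ cost at most two derivatives — and then invoking the iterated commutator bound of Lemma~\ref{bound:comm}. The stability part, though lengthy because of the six families $\mathcal{M}_j$, is routine once one observes that every oscillatory symbol that appears is bounded on $i\mathbb{R}$ uniformly in $c$.
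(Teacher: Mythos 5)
Your proposal follows essentially the same route as the paper, whose proof is exactly the terse version of your plan: the local error is read off from the expansion \eqref{Duh4} versus \eqref{scheme2}, with the remainder $\mathcal{B}_3$ obeying \eqref{B3} and the integral remainders controlled through the iterated commutator bound of Lemma \ref{bound:comm} (hence only two derivatives lost), combined with an $H^r$ stability estimate as in Lemma \ref{thm:stab} and a Lady Windermere's fan argument. The only small blemish is your appeal to Theorem \ref{thm:glob} for the a priori boundedness of $u^n$: that theorem concerns the first order scheme \eqref{scheme}, so the bound on the iterates of \eqref{scheme2} should instead be obtained by induction within the fan argument (as is standard), which does not affect the rest of your reasoning.
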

\begin{proof}
The proof follows the line of argumentation as the proof of Theorem \ref{thm:glob} where we note that the local error $ \mathcal{B}_3(t,u)$ (compare \eqref{Duh4} and \eqref{scheme2}) satisfies \eqref{B3} and only involves second order derivatives of the solution.
\end{proof}
{ 
\begin{rem}
Note that by  \eqref{eq:BCc} we have that $u(0) \in H^{r+2}$ if
\begin{equation}\label{condz2}
z(0) \in H^{r+2}, \quad c^{-1}\nab^{-1} z'(0)\in H^{r+2}.
\end{equation}
Hence, if the solution $z$ of \eqref{eq:kgr} satisfies \eqref{condz} we can conclude by  local wellposedness of \eqref{eq:NLSc} that there exists a $T=T_{r+2}>0$ such that $u \in \mathcal{C}([0,T_{r+2}];H^{r+2})$.
\end{rem}}
\begin{rem}\label{thm:asymp2}
Again we can show that the scheme \eqref{scheme2} asymptotically $($in the limit $c\to \infty)$ converges to a second order low regularity integrator for   NLS \eqref{NLSlimit}  in the sense that for sufficiently smooth solutions
$$
\Vert u^n - e^{ic^2 t_n} u_{\ast,\infty}^n\Vert_r \leq k c^{-1}
$$
where $u_{\ast,\infty}^n$ is a second order low regularity approximation of the NLS  limit equation \eqref{NLSlimit} $($see \cite{BS20}$)$
\begin{equation}\label{scheme2NLS}
\begin{aligned}
u_{\ast,\infty}^{n+1} 
& = e^{-i \tau \frac{1}{2}\Delta }u_{\ast,\infty}^{n}- i \tau \frac38   e^{-i \tau  \frac{1}{2}\Delta}\Big[ 
\left( u_{\ast,\infty}^{n}\right)^2 \big(\varphi_1(i \tau \Delta) - \varphi_2(i \tau \Delta) \big) \overline{u_{\ast,\infty}^{n}}\\& 
\quad + \left(e^{-i \tau \frac{1}{2}\Delta }u_{\ast,\infty}^{n}\right)^2 \varphi_2(i\tau\Delta) e^{-i \tau \frac{1}{2}\Delta }\overline{u_{\ast,\infty}^{n}}
\Big]
- \frac{9\tau^2}{2\cdot 64}   \vert u_{\ast,\infty}^{n}\vert^4   {u_{\ast,\infty}^{n}}.
\end{aligned}
\end{equation}

\end{rem}

\section{Numerical Experiments}\label{sec:num}
In this section we numerically underline our theoretical findings. In particular we observe the uniform accuracy in $c$  and low regularity approximation of the schemes as stated in Theorem \ref{thm:glob} and Theorem \ref{thm:glob2}.

{ In the convergence order experiments we approximate the spatial differential operators via a standard Fourier pseudo-spectral method, choosing $M=200$ as the highest Fourier mode, which corresponds to $\Delta x=0.005$. We considered randomly generated $H^1$ and $H^2$ initial data respectively (see for instance \cite{OS18}) and we integrate them up to $t=1$. The step size $\tau$ we have chosen to be $2^{-i}$, where $i=6,\dots,16$.}

In Figure \ref{1st_order_plot} we plot the convergence of the first order scheme \eqref{scheme} for various values of $c$ for a rough initial data $u(0) \in H^1$. In Figure \ref{2nd_order_plot} we plot the convergence of the second order scheme \eqref{scheme2} for various values of $c$ for a rough initial data $u(0) \in H^2$.   For the spatial discretisation we use a classical Fourier pseudo spectral method. Our numerical findings underline the uniform accuracy of the schemes  for rough data.
 \begin{figure}[h!]
  \includegraphics[width=0.45\textwidth]{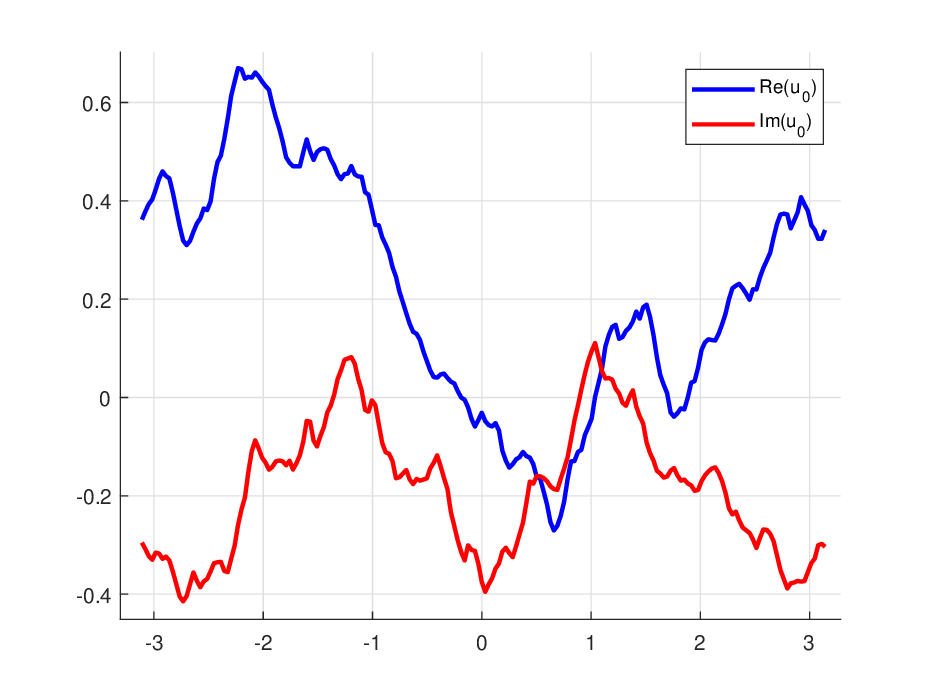}
  \includegraphics[width=0.45\textwidth]{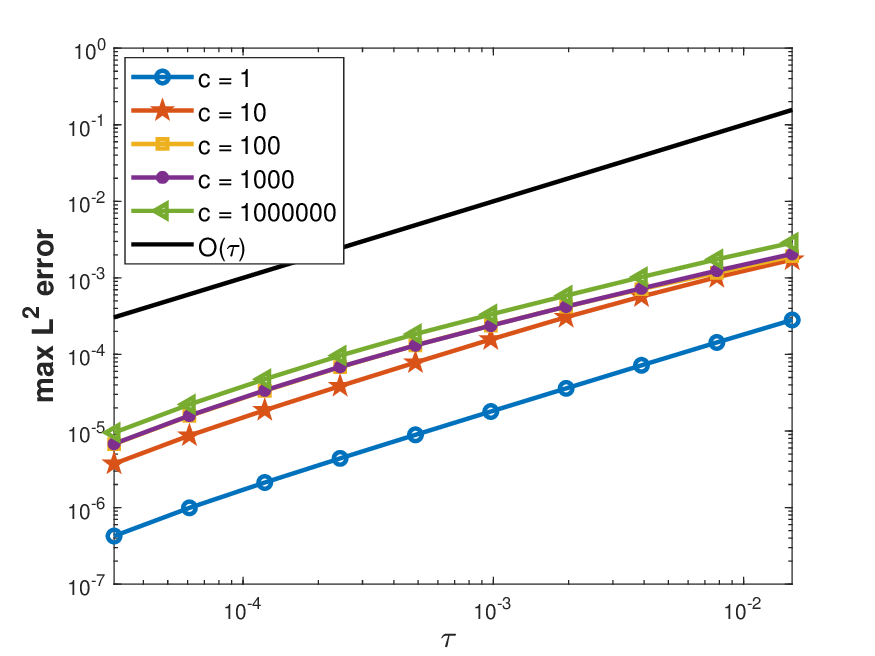}
  \caption{Convergence plot of the first order scheme \eqref{scheme} with $H^1$ initial data (left).}
  \label{1st_order_plot}
\end{figure}
 \begin{figure}[h!]
  \includegraphics[width=0.45\textwidth]{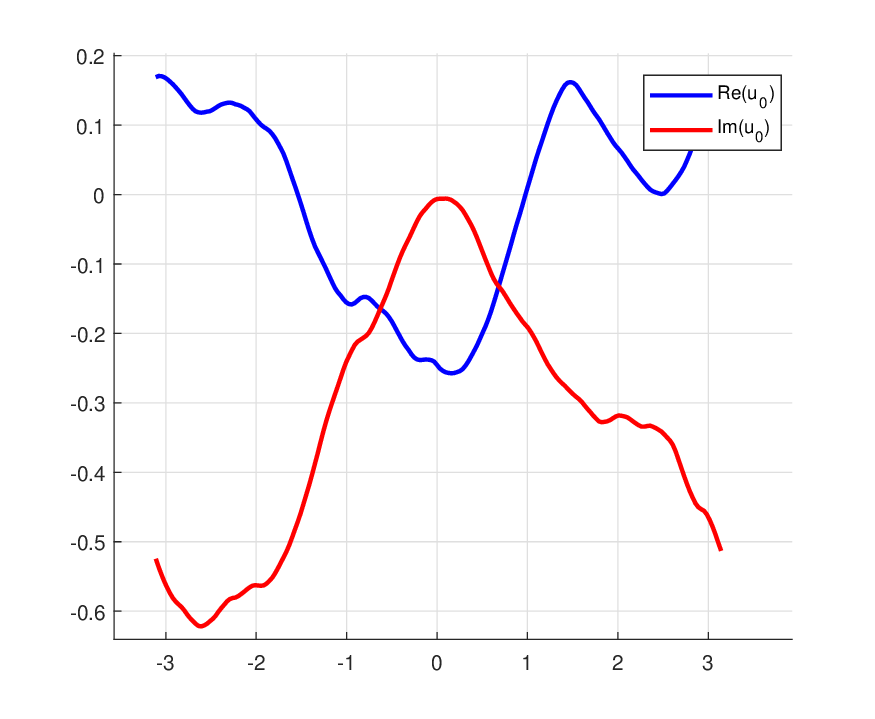}
  \includegraphics[width=0.45\textwidth]{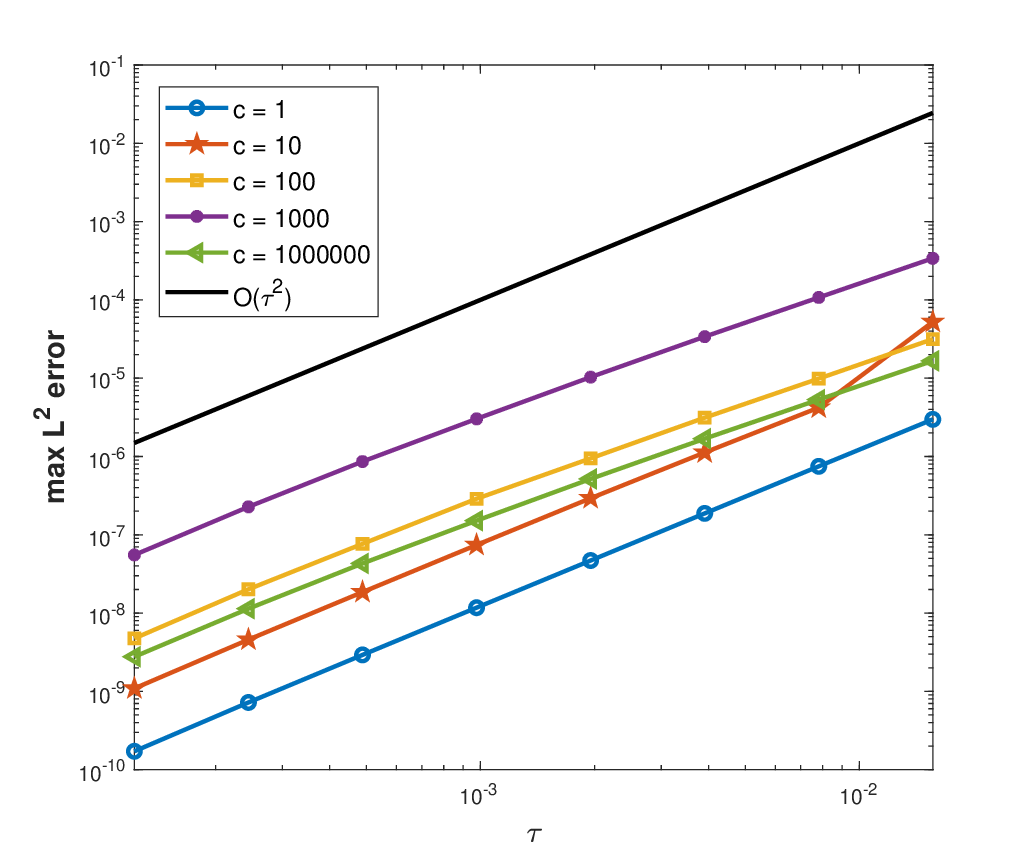}
  \caption{Convergence plot of the second order scheme \eqref{scheme2} with $H^2$ initial data (left).}
  \label{2nd_order_plot}
\end{figure}

{ In this last numerical experiment, we employed the same spatial approximation method and we integrate $H^2$ initial data up to $t=100$ using our second order method. We show the difference in absolute value of the energy at time $0<t\leq 100$ and at time $t=0$.}

\begin{figure}[h!]
  \includegraphics[width=0.32\textwidth]{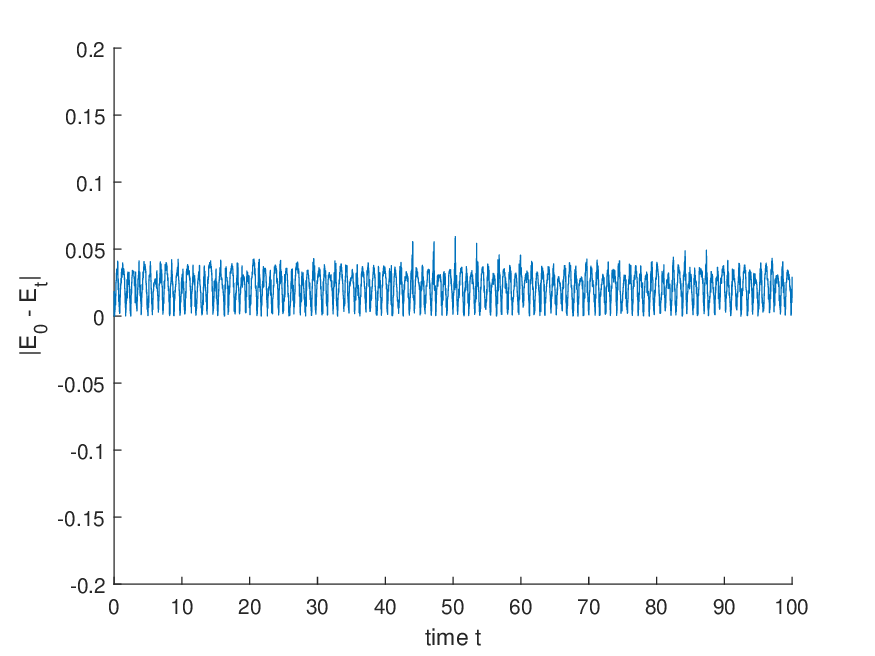}
  \includegraphics[width=0.32\textwidth]{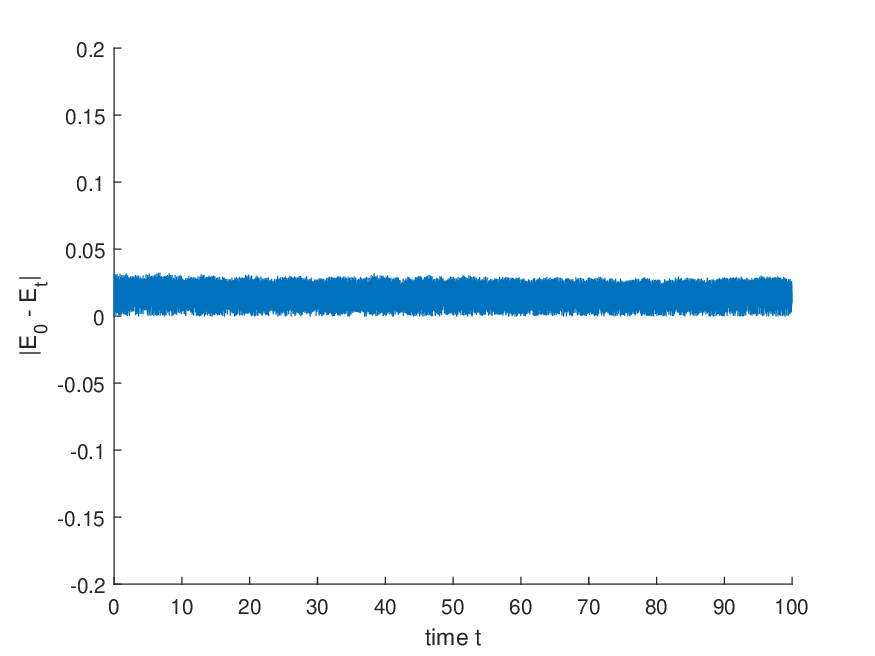}
  \includegraphics[width=0.32\textwidth]{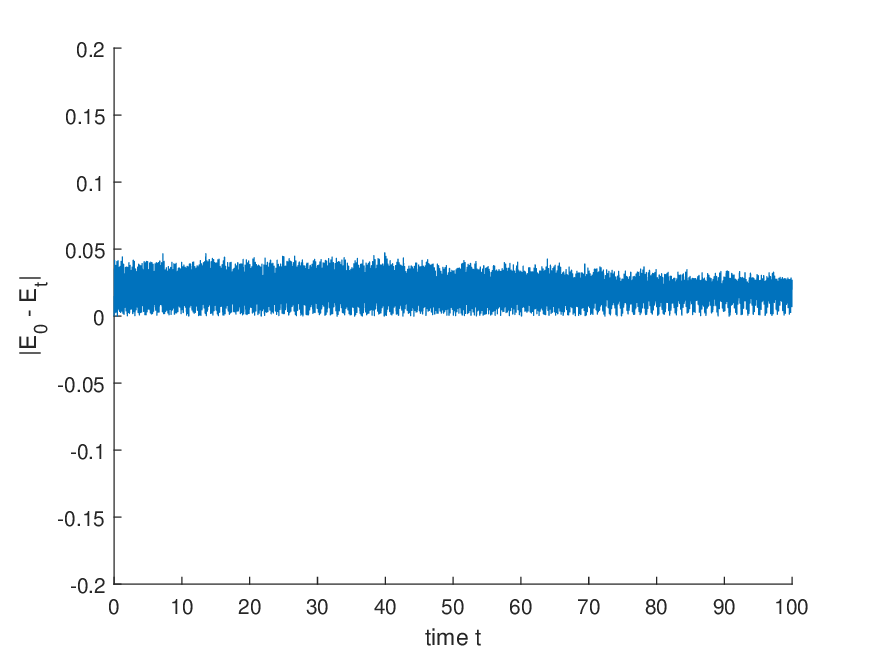}
  \caption{{Plot showing the difference between the initial energy and the energy at time $t$, up to $t=100$ for the values $c=1$, $c=10$ and $c=100$ respectively.}}
  \label{1st_order_plot}
\end{figure}

\subsection*{Acknowledgements}

{\small
The authors have received funding from the European Research Council (ERC) under the European Union’s Horizon 2020 research and innovation programme (grant agreement No. 850941).
}

\end{document}